\def\pgf@stroke@inner@line{%
 \let\pgf@temp@save=\pgf@strokecolor@global
 \pgfsys@beginscope%
 {%
  \pgfsys@roundcap
  \pgfsys@setlinewidth{\pgfinnerlinewidth}%
  \pgfsetstrokecolor{\pgfinnerstrokecolor}%
  \pgfsyssoftpath@invokecurrentpath%
  \pgfsys@stroke%
 }%
 \pgfsys@endscope%
 \global\let\pgf@strokecolor@global=\pgf@temp@save
}
\definecolor{darkgreen}{rgb}{0.0, 0.2, 0.13}
\def\IN{\mathbb N}
\def\IZ{\mathbb Z}
\def\IR{\mathbb R}
\def\IQ{\mathbb Q}
\def\IB{\mathbb B}
\def\an{\mathrm{an}}
\def\exp{\mathrm{exp}}
\def\vs{\vspace}
\def\ma{\mathcal}
\def\alg{\mathrm{alg}}
\newcommand{\R}{\IR}
\newcommand{\Eh}{\mathcal{E}}
\renewcommand{\div}{\mathrm{div}}
\newcommand{\vol}{\mathrm{vol}}
\newcommand{\sub}{\mathrm{sub}}
\newcommand{\con}{\mathrm{con}}
\newcommand{\defin}{\mathrm{def}}
\newcommand{\sing}{\mathrm{sing}}
\newcommand{\dR}{\mathrm{dR}}
\newcommand{\Mh}{\mathcal{M}}
\newcommand{\Ah}{\mathcal{A}}
\newcommand{\Cdef}{\mathcal{C}_\mathrm{def}}
\newcommand{\Csub}{\mathcal{C}_\mathrm{sub}}
\newcommand{\Ccon}{\mathcal{C}_\mathrm{con}}
\newcommand{\Oconext}[2]{\Omega^{#1}_{\mathrm{con}(#2)}}
\newtheorem{lemma}{Lemma}[section]
\newtheorem{prop}[lemma]{Proposition}
\newtheorem{thm}[lemma]{Theorem}
\newtheorem{cor}[lemma]{Corollary}
\newtheorem{theorem}[lemma]{Theorem}
\theoremstyle{definition}
\newtheorem{defn}[lemma]{Definition}
\newtheorem{notation}[lemma]{Notation}
\newtheorem{ex}[lemma]{Example}
\newtheorem{rem}[lemma]{Remark}
\newtheorem{defnrem}[lemma]{Definition and Remark}
\newtheorem{finrem}[lemma]{Final Remark}
\begin{document}
\title[On the De Rham Theorem in the Globally Subanalytic Setting]{On the De Rham Theorem in the Globally Subanalytic Setting}
\author{Annette Huber}
\address{Mathematical Institute,
University of Freiburg,
79085 Freiburg,
Germany}
\email{ annette.huber@math.uni-freiburg.de}
\author{Tobias Kaiser}
\address{Faculty of Computer Science and Mathematics,
University of Passau,
94030 Passau,
Germany
}
\email{tobias.kaiser@uni-passau.de
}
\author{Abhishek Oswal}
\address{Mathematical Institute,
University of Freiburg,
79085 Freiburg,
Germany}
\email{abhishek.oswal@math.uni-freiburg.de}

\date{\today}
\keywords{ globally subanalytic manifolds, constructible de Rham cohomology, constructible de Rham theorem}
\subjclass{03C64, 32B20, 32C05, 
		58A07, 58A12}

\begin{abstract}	
				For globally subanalytic manifolds we define de Rham complexes of globally subanalytic differential forms and of constructible differential forms. Whereas the de Rham theorem does not hold for the former in the non-compact case, it does hold for the latter in full generality. We deduce that the constructible de Rham cohomology groups are canonically isomorphic to the classical ones. We stress that our results apply already in the $C^1$-setting.
\end{abstract}
\maketitle	

	\section*{Introduction}
We are interested in homology and cohomology theories in the tame setting of o-minimal structures.
O-minimal singular homology and cohomology have been successfully developed, first by Delfs and Knebusch \cite{delfs_knebusch} 
in the semialgebraic case and then by Edmundo and Woerheide \cite{edmundo_woerheide_comparison} in the general case of o-minimal expansions of a real closed field.

What has been missing so far is an o-minimal version of de Rham cohomology, more precisely of the de Rham theorem comparing definable de Rham with o-minimal singular cohomology. Note that Bianconi and Figueiredo have previously introduced in their preliminary note \cite{bianconi2019minimal}, a version of definable de Rham cohomology groups.
The first author of the present paper has in \cite{huber_period_iso_tame} studied the period pairing for manifolds definable in an o-minimal structure, analyzing which simplices can be used for integration. But on the side of differential forms an o-minimal version leading to a definable de Rham theorem did not exist so far. 

We focus on the globally subanalytic setting, i.e. on the o-minimal structure $\IR_\an$ of the real field  with restricted analytic functions. Given a globally subanalytic $C^\omega$-manifold that is compact every real analytic function is globally subanalytic. Since in the $C^\omega$-setting the classical de Rham cohomology groups are generated by real analytic differential forms (see for example Beretta \cite{B}) we have immediately a definable version of the de Rham theorem. But in the non-compact case the situation is completely different. 
The straightforward definition using definable forms does not give rise to a good de Rham cohomology theory, mainly due to the failure of the Poincar\'e lemma. Integration arguments are no longer available in the o-minimal setting.

However in the globally subanalytic setting we are able to remedy the situation. Here one can consider the class of constructible functions as introduced by Raf Cluckers and Dan Miller \cite{cluckers_miller}. Constructible functions are finite sums of finite products of globally subanalytic functions and the logarithm of positive such functions. They are definable in the o-minimal structure $\IR_{\an,\exp}$ of the real field with restricted analytic functions and exponentiation (see van den Dries and Miller \cite{vdD_Miller_Geometric} for the structures $\IR_\an$ and $\IR_{\an,\exp}$). The main property of this class is that it is closed under parametric integration. 

Constructible functions of class $C^\infty$ are real analytic by \cite{kaiser_rocky}. The reasoning of \cite{B} involves deep results from  cohomology of coherent sheaves on Stein complex manifolds which are so far not available in the definable or constructible setting. 
But in the case of globally subanalytic $C^p$-manifolds where $0<p<\infty$ we are able to establish a full constructible de Rham theorem.
Given $0\leq q<p$ we define constructible differential forms of regularity $q$ and the corresponding de Rham cohomology groups. 
Note that this includes the case of $C^1$-manifolds: By the tame geometric properties of o-minimal structures we are able to establish a version of the de Rham complex leading to the de Rham theorem also for those.

The definition of  constructible differential forms is subtle. 
We construct them similarly to the definition of regulous functions (see Kucharz and Kurdyka \cite{KK_regulous}). We indicate the case $q=0$: 
A constructible differential form is $C^1$ on a dense open globally subanalytic set. We consider now the vector space of the continuous constructible differential forms such that the Cartan derivative on such an open set extends to a continuous 
constructible differential form on the whole manifold. This defines the \emph{constructible de Rham complex} and the \emph{constructible de Rham cohomology groups}. It is a consequence of the tame properties of o-minimal structures that they are indeed functorial for globally subanalytic $C^1$-maps.

Our main result, \autoref{T 6.4} and \autoref{thm:main-sheaf-version}, is the de Rham theorem for these constructible de Rham cohomology groups: they are canonically isomorphic to the singular cohomology groups (and if the given manifold is $C^\omega$ therefore to the classical de Rham cohomology groups). 

Following closely the arguments in the classical case, we give two versions of proof, one fairly explicit one depending on Stokes' Theorem and a sheaf theoretic one. In both cases the ingredients are the same:
the {\it Mayer-Vietoris property} and 
{\it homotopy invariance}.

The Mayer-Vietoris property follows in a straight-forward way from the existence of globally subanalytic partitions of unity in the $C^p$-case for $p<\infty$. It is this part of the argument that breaks down for the $C^\omega$-versions of constructible de Rham cohomology.

Homotopy invariance turns out to be more subtle. As for functoriality, tameness is used heavily.

The paper is organized as follows. In the first section we introduce some notations and preliminaries on definable manifolds and differential forms.  Note that we start with a fairly general setup to be used in future work. In Section 2 we construct the definable and constructible de Rham cohomology groups.
In the third section we show that we have functorial pull back and that Stokes's theorem holds in this setting. In Section 4 we establish homotopy invariance of constructible de Rham cohomology. In the fifth section we provide some geometric results allowing us to prove in Section 6 a constructible de Rham theorem in the case of globally $C^p$-manifolds with $p<\infty$ using the period pairing. In the last section we give a sheaf-theoretic proof. Here we also establish that the hypercohomology of the constructible de Rham complex computes the singular cohomology of a (not necessarily compact) globally subanalytic $C^\omega$-manifold.

\subsection*{Acknowledgements}
We would like to thank Nadine Große for discussion on the divergence lemma (Proposition~\ref{prop:analysis}.)

\section{Setting} 
\subsection{Notations and Preliminaries}\label{1.1}
\begin{itemize}
 \item By $\IN=\{1,2,3,\ldots\}$ we denote the set of natural numbers and by $\IN_0$ the set of natural numbers with $0$.
\item Given a set $A$ and some $a\in A$ we denote by $c_a$ the constant map on $A$ with value $a$.
\item  By $I$ we denote the open unit interval $(0,1)$ in $\IR$.
\item Let $r\in \IN_0\cup\{\infty,\omega\}$. We denote by $C^r$ the class of real valued functions which are $r$-times continuously differentiable  in the case $r\in \IN_0$ respectively infinitely differentiable in the case $r=\infty$ respectively analytic in the case $r=\omega$.
\item We denote by $\R_\alg$ the o-minimal structure of semialgebraic sets.
\item We denote by $\R_\an$ the o-minimal structure generated by the restricted analytic functions; the definable sets and functions are the globally subanalytic ones. 	
\end{itemize}

Throughout the paper we fix an o-minimal structure $\mathcal{M}$ expanding the ordered field of real numbers $(\R, <, +, \cdot, 0, 1)$, see \cite{van-den-Dries-tame-topology}. By definable, we shall always mean $\mathcal{M}$-definable (with parameters in $\R$).
 
We will be mostly interested in the case that $\mathcal{M}$ is the o-minimal structure $\R_{\an}$.	We assume familiarity with the definition of globally subanalytic sets and functions on the reals (compare with \cite{vdD_Miller_Geometric}).

Let $A$ be a globally subanalytic set in $\IR^n$. A \emph{constructible function} $f:A\to \IR$ is of the form
	$$f=\sum_{i=1}^k f_i\prod_{j=1}^{l_i} \log(g_{ij})$$
	where $f_i:A\to \IR, g_{ij}:A\to \IR_{>0}$ are globally subanalytic functions (see \cite{cluckers_miller}).
	Note that constructible functions are closed under composition with globally subanalytic functions from the right. The class is closed both under taking partial derivatives and parametric integrals. 
	The latter is the main result of Cluckers and Miller \cite{cluckers_miller}. The former follows by adapting the reasoning of \cite{kaiser_rocky} (where the corresponding result for the larger class of log-analytic functions has been shown) to the setting of constructible functions, see \cite{kaiser-ConstrDiff} for details.

\subsection{Definable manifolds}

We introduce the main objects of our paper.

\begin{defn} Let $p \in \IN_0\cup\{\infty, \omega\}$ and $X$ a Hausdorff topological space. Let $n\in \IN_0$.
 \begin{enumerate}
     \item A \emph{definable $C^p$-atlas $\mathcal{A}$ of constant dimension $n$} on $X$ is the data \[\mathcal{A} = \{(U_i, V_i\subset \R^n, \phi_i: U_i \xrightarrow{\sim} V_i) \mid 1 \leq i \leq r\},\] consisting of a finite open cover $X = \bigcup_{i=1}^r U_i$ of $X$,  definable open subsets $V_i \subset \R^{n}$ along with homeomorphisms $\phi_i : U_i \xrightarrow{\sim} V_i \subset \R^{n}$ such that for each $i,j$,  $\phi_i \circ \phi_j^{-1} : \phi_j(U_i \cap U_j) \xrightarrow{\sim} \phi_i(U_i \cap U_j)$ is a definable $C^p$-diffeomorphism. We refer to the maps $\phi_i$ as \emph{charts}.
     
     \item We declare two definable $C^p$-atlases $\mathcal{A} = \{(U_i, V_i, \phi_i) : 1\leq i \leq r\}$ and $\mathcal{A}' = \{(U_j', V_j', \phi_j') : 1 \leq j \leq s\}$ on $X$ of constant dimension $n$ to be equivalent if the union $\mathcal{A}\cup \mathcal{A}'$ is a definable $C^p$-atlas of constant dimension $n$ on $X$. This defines an equivalence relation on the set of all definable $C^p$-atlases on $X$ of dimension $n$. We denote the equivalence class of a definable $C^p$-atlas $\mathcal{A}$ on $X$  by $[\mathcal{A}].$ 
     
     \item A \emph{definable $C^p$-manifold of dimension $n$} is the data $(X, [\mathcal{A}])$ of a Hausdorff topological space $X$ equipped with the equivalence class of a definable $C^p$-atlas $[\mathcal{A}]$ of constant dimension $n$ on $X$. We shall often suppress the choice of the equivalence class of the atlas $[\mathcal{A}]$ for notational simplicity.
     
     \item Let $(X,[\mathcal{A}])$ be a definable $C^p$-manifold of dimension $n$. A subset $A \subset X$ is said to be \emph{a definable subset of $X$} if for every $\phi_i : U_i \xrightarrow{\sim} V_i \subset \R^n$ in $\mathcal{A}$, the set $\phi_i(A\cap U_i) \subset \R^n$ is a definable subset of $\R^n$. It is easy to check that the notion of definable subsets only depends on the equivalence class of the chosen atlas $\mathcal{A}$, and that the collection of definable subsets of $X$ is closed under finite intersections, finite unions, and complements. We shall denote the Boolean algebra of definable subsets of $X$ by $\mathrm{Def}(X)$.	
     
     \item A \emph{definable $C^p$-map} $f : (X,[\mathcal{A}]) \rightarrow (X',[\mathcal{A}'])$ of two definable $C^p$-manifolds $X$ and $X'$ is a continuous map $f : X \rightarrow Y$ such that for every element $(U_i, V_i, \phi_i : U_i \xrightarrow{\sim} V_i) \in \mathcal{A}$ and $(U_j',V_j',\phi_j': U_j' \xrightarrow{\sim} V_j') \in \mathcal{A}'$ the map $ \phi_j' \circ f \circ \phi_i^{-1} :  \phi_i\big(U_i \cap f^{-1}(U'_j)\big) \rightarrow V_j'$ is definable and $C^p$. This definition also only depends on the equivalence class of the chosen atlas $\mathcal{A}$.
     
	\item We denote by $C^p_\defin(X)$ the $\R$-algebra of definable $C^p$-functions on a definable $C^p$-manifold $X$.

	\item Let $X$ be a definable  $C^p$-manifold of dimension $n$ and let $d\in \{0,\ldots,n\}$. A subset $Y$ is a \emph{definable $C^p$-submanifold of $X$ of dimension $d$} if it is a definable subset and a $C^p$-submanifold of $X$ of dimension $d$. Note that $Y$ can then be endowed with a definable $C^p$-manifold structure.

\item A definable $C^p$-manifold $X$ is called \emph{affine} if it is definably $C^p$-diffeomorphic to a definable $C^p$-submanifold of $\R^N$ for some $N$.
 \end{enumerate} 
\end{defn}

\begin{rem}
    When $\mathcal{M} = \R_\mathrm{alg}$ 
 (or $\mathcal{M} = \R_\mathrm{an}$) a definable $C^p$-manifold shall also be called a \emph{semialgebraic  $C^p$-manifold}   (or \emph{a globally subanalytic $C^p$-mani\-fold}, respectively). A semialgebraic $C^\omega$-manifold is also referred to as a Nash manifold. We also note that $C^\infty$-semialgebraic and $C^\infty$-globally subanalytic  manifolds are automatically $C^\omega$ as follows from a remarkable result of van den Dries--Miller \cite{vdD_Miller_Tamm}. 
\end{rem}

In the globally subanalytic case, we are also interested in the constructible version.

\begin{defn}
Let $(X,[\Ah])$ be a globally subanalytic $C^p$-manifold.
A function $f : X \rightarrow \R$ is said to be \emph{constructible} if for every chart $(U_i,V_i\subset \R^n,\phi_i : U_i \xrightarrow{\sim} V_i) \in \mathcal{A}$, the composition $f \circ \phi_i^{-1} : \phi_i(U_i) \rightarrow \R$ is constructible. 

We let $\mathcal{C}_\mathrm{con}^p(X)$ be the $\R$-algebra of constructible $C^p$-functions on $X$.
\end{defn} 

Note that constructibility is invariant under change of globally subanalytic variables because a function on a globally subanalytic set is constructible if and only if its restriction to a finite cover by (not necessarily open) globally subanalytic subsets is constructible.

\begin{rem}\label{rem:definably-normal} 
    We note that every definable $C^0$-manifold is \emph{definably normal}  (see for instance \cite[Lemma 3.5]{edmundo2000minimal}, and \cite[Theorem 2.7]{bianconi2019minimal}). Recall this means that for two disjoint closed definable subsets $C, C' \subset X$ there exist disjoint open definable subsets $U \subset X$ and $U' \subset X$ such that $C \subset U$ and $C' \subset U'$.
    On the other hand, note that there are examples of Hausdorff definable spaces (in the sense of \cite[Ch. 10, \S 1]{van-den-Dries-tame-topology}) that are not definably normal (see \cite[Example on p.\,159]{van-den-Dries-tame-topology}).
\end{rem}

\begin{prop}[Definable Partition of Unity]\label{partition-of-unity}
    Suppose that $\mathcal{M}$ is a polynomially bounded o-minimal structure admitting $C^\infty$-cell decomposition. 
    Let  $p < \infty$ and let $X$ be a definable $C^p$-manifold. Then definable partitions of unity of order $p$ exist: given any open definable subset $U \subset X$ and any finite cover of $U$ by open definable subsets $U = \bigcup_{i=1}^r U_i$ one can find  definable $C^p$-functions $f_i:U\to \R$ for $ 1\leq i \leq r$ such that
    \begin{enumerate}
        \item  $\sum_{i=1}^r f_i = 1$, and
        \item the support of each $f_i$ is contained in some closed definable subset of $U$ contained in $U_i$.
    \end{enumerate}
\end{prop}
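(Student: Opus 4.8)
The plan is to run the classical construction of partitions of unity inside the definable category, reducing everything to the construction of one family of nonnegative definable $C^p$ ``bump'' functions, which in turn amounts to a definable $C^p$-smooth version of Urysohn's lemma; the latter is where the hypotheses on $\mathcal{M}$ are genuinely used.

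\textbf{Step 1 (reduction to zero-set functions).} Replacing $X$ by the open definable submanifold $U$ — which is again a definable $C^p$-manifold — we may assume $U=X$. Every definable $C^0$-manifold is definably normal by \autoref{rem:definably-normal}, so the finite shrinking lemma goes through verbatim in the definable category: inductively, if $U_1,\dots,U_{k-1}$ have been shrunk to open definable sets $W_1,\dots,W_{k-1}$ still covering $X$ together with $U_k,\dots,U_r$, then $X\setminus\bigl(\bigcup_{j<k}W_j\cup\bigcup_{j>k}U_j\bigr)$ is a closed definable subset of $U_k$, and definable normality separates it from the closed definable set $X\setminus U_k$ by an open definable $W_k$ with $\overline{W_k}\subseteq U_k$. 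This produces an open definable cover $X=\bigcup_{i=1}^r W_i$ with $\overline{W_i}\subseteq U_i$. Suppose now we know that \emph{every closed definable subset $C$ of a definable $C^p$-manifold is the zero set of a nonnegative definable $C^p$-function $\psi_C$}. Applying this to $C_i:=X\setminus W_i$ yields definable $C^p$-functions $g_i:=\psi_{C_i}\ge 0$ with $\{g_i>0\}=W_i$; hence $g:=\sum_{i=1}^r g_i$ is definable, $C^p$ and everywhere positive (the $W_i$ cover $X$), and the functions $f_i:=g_i/g$ are definable, $C^p$, satisfy $\sum_i f_i=1$, and $\operatorname{supp} f_i\subseteq\overline{\{g_i\ne 0\}}=\overline{W_i}\subseteq U_i$. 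This is exactly the assertion of the proposition, so it remains to prove the italicized statement.

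\textbf{Step 2 (zero-set functions, the main point).} In $\R^n$ one starts from the definable distance function $d(x)=\operatorname{dist}(x,C)$, which is continuous and vanishes exactly on $C$, and one must repair its lack of differentiability. Here polynomial boundedness and $C^\infty$-cell decomposition enter: taking a $C^\infty$-cell decomposition adapted to $C$ and invoking the {\L}ojasiewicz inequality valid in any polynomially bounded o-minimal structure, one controls the derivatives of suitable definable modifications of $d$ (e.g.\ of high powers of $d$, cell by cell) and obtains, for the given finite $p$, a nonnegative definable $C^p$-function with zero set exactly $C$ — equivalently, a definable $C^p$-function on $\R^n\setminus C$ which together with all its partial derivatives of order $\le p$ extends continuously by $0$ across $C$. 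To pass to a general definable $C^p$-manifold $X$ one uses a finite atlas: either glue the $\R^n$-construction over the charts by an induction on their number, damping the discrepancies on overlaps with the help of the definable $C^0$-separation coming from definable normality, or invoke affineness of definable $C^p$-manifolds for $p<\infty$ to realize $X$ as a definable $C^p$-submanifold of some $\R^N$, build the zero-set function on $\R^N$ for the closure of $C$, and restrict.

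\textbf{Main obstacle.} The entire difficulty is concentrated in Step 2, and this is precisely the point signalled in the introduction. A nonzero definable $C^\infty$- (let alone $C^\omega$-) function in a polynomially bounded o-minimal structure cannot vanish to infinite order along a closed set, so the smoothing argument has no analogue for $p\in\{\infty,\omega\}$; this is why the statement is restricted to $p<\infty$. Likewise both structural hypotheses are needed for Step 2: without a {\L}ojasiewicz inequality one cannot bound the derivatives of the smoothed distance function, and without $C^\infty$-cell decomposition one cannot produce the local definable $C^p$ models to glue. I expect the technical heart of the proof to be exactly these derivative estimates — turning ``high powers of $d$'' into honest definable $C^p$-functions, uniformly over the cells of the decomposition.
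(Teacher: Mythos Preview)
Your proposal is correct and follows the same route as the paper: shrink the cover via definable normality, then set $f_i=\psi_i/\sum_j \psi_j$ for nonnegative definable $C^p$-functions $\psi_i$ vanishing exactly on $X\setminus W_i$. The paper simply invokes \cite[Theorem~6.1]{KCBP} for the existence of these zero-set functions rather than sketching the {\L}ojasiewicz-type construction you outline in Step~2; otherwise the arguments coincide.
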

\begin{proof}    Let $U = \bigcup_{i=1} U_i$ be a finite definable open cover of a definable open subset $U\subset X$. Since $U$ is definably normal (see \autoref{rem:definably-normal}), we may replace $X$ by $U$ and assume that $X = U = \bigcup_{i=1}^r U_i$ is a definable open cover of $X$. Further replacing the cover $\{U_i : 1\leq i \leq r\}$ by a finite refinement, we shall assume without loss of generality that each $U_i$ is contained in a chart of some definable atlas. Since $X$ is definably normal, we may proceed as in \cite[Ch. 6, Lemma 3.6]{van-den-Dries-tame-topology}, to find open definable subsets $V_i \subset U_i$ such that the closure in $X$ of $V_i$ is contained in $U_i$ and such that $X = \bigcup_{i=1}^r V_i$. 

Our $X$ satisfies the assumptions of \cite[Theorem 6.1]{KCBP}.
It provides us with definable $C^p$-functions $\psi_i:X\to\R$ such that $\psi_i \geq 0$ and $\psi_i^{-1}(0) = X\setminus V_i$. Set 
$$f_i := \frac{\psi_i}{\sum_{j=1}^r \psi_j} \in \Cdef^p(X).$$ 
Then $\{f_i : 1\leq i \leq r\}$ is a partition of unity subordinate to the cover $X = \bigcup_{i=1}^r U_i.$ This completes the proof.
\end{proof}

Note that the assumptions of the proposition are satisfied in the cases
of $\R_\an$ and $\R_\alg$. 

\begin{rem}
In \cite[Section 4]{Fujita_Kawakami} Fujita and Kawakami 
prove in the more general setting of definably complete locally o-minimal structures that for $r\geq 1$ a definably normal $C^r$-manifold is affine. In the course of their proof, they have formulated a result on the existence of definable partitions of unity. 
For convenience, we have also given a direct proof above.
\end{rem}
\subsection{Homotopies}
We fix $p\in \IN_0\cup\{\infty,\omega\}$. Let $X,Y$ be definable $C^p$-manifolds.
\begin{defn}
Let $f,g:X\to Y$ be  definable $C^p$-maps. Then $f,g$ are called \emph{definably $C^p$-homotopic} if there is a definable  $C^p$-map $h:X\times (-\varepsilon,1+\varepsilon)\to Y$ for some $\varepsilon>0$ such that $f=h\vert_{X \times \{0\}}$ and $g=h\vert_{X \times \{1\}}$.
\end{defn}
\begin{rem}As all open (bounded or unbounded) intervals are even Nash-diffeomorphic, the particular choice of interval $(-\varepsilon,1+\varepsilon)$ is not
important in this definition. 
\end{rem}

\begin{defn}
	We say that $X$ and $Y$ are \emph{definably $C^p$-homotopy equivalent} if there are definable $C^p$-maps $\varphi:X\to Y$ and $\psi:Y\to X$ such that $\psi\circ\varphi$ is definably $C^p$-homotopic to $\mathrm{id}_X$ and 
	$\varphi\circ\psi$ is definably $C^p$-homotopic to $\mathrm{id}_Y$.
\end{defn}

\begin{defn}
	We call $X$ \emph{definably $C^p$-contractible} if it is definably $C^p$-homotopy equivalent to a point, in other words: if there is $a\in X$  such that $\mathrm{id}_X:X\to X$ and the constant function $c_a:X\to X$ with value $a$ are definably $C^p$-homotopic.
\end{defn}

\subsection{Differential forms}
We assume familiarity with the calculus of differential forms on manifolds as in \cite{bott_tu}, \cite{madsen_tornehave} or \cite{warner-foundations}.

Let $X$ be a definable $C^p$-manifold where $p\in \IN\cup\{\infty,\omega\}$; i.e. $p\geq 1$. Then its cotangent bundle $T^*X \rightarrow X$ has a natural structure of a definable $C^{p-1}$-manifold such that the projection $T^*X \rightarrow X$ is a definable map of $C^{p-1}$-manifolds.
We denote by $\Lambda^kT^*X \rightarrow X$ its $k^\mathrm{th}$-exterior power. Again it has a natural structure of a definable $C^{p-1}$-manifold.

For the rest of the paper we use the convention $\infty-1:=\infty$ and $\omega-1:=\omega$. Let $0\leq q\leq p-1$.

\begin{defn}\label{def:definable-Cq-forms}
A \emph{definable $C^q$-form on $X$ of degree $k$} is a definable $C^q$-section of $\Lambda^kT^*X \rightarrow X$. We denote by $\Eh^k_{\defin(q)}(X)$ the space of all definable $C^q$-forms on $X$ of degree $k$.
\end{defn}

It follows from the definitions that a differential form $\omega$ of class $C^q$ is definable if all coefficient functions in all coordinate charts of a definable atlas are definable.

In the case of globally subanalytic manifolds we also write $\ma{E}^k_{\sub(q)}(X)$. There is also a constructible variant.

\begin{defn}\label{def:const-Cq-forms} Assume that $X$ is globally subanalytic. A
$C^q$-form $\omega$ on $X$ is called \emph{constructible} if all coefficient functions in all coordinate charts of a globally subanalytic atlas are constructible. We denote by $\Eh^k_{\con(q)}(X)$ the space of all constructible $C^q$-forms on $X$.
\end{defn}

\begin{rem}
Assume that $X$ is globally subanalytic. Let 
$\omega$  be a $C^\infty$-differential form on $X$.
	\begin{itemize}
		\item[(1)] If $\omega$ is globally subanalytic then $\omega$ is $C^\omega$ by \cite{vdD_Miller_Tamm}.
		\item[(2)] If $\omega$ is constructible then $\omega$ is $C^\omega$ by \cite{kaiser_rocky}.
	\end{itemize}
\end{rem}

Differential forms come with an algebra structure that will not play a role in our paper. On the other hand, the differential is essential. 

\begin{rem}\label{rem:exterior-der-of-definable-forms} Let $1\leq q\leq p-1$.
The differential induces a well-defined map
\[ d:\Eh^k_{\defin(q)}(X)\to \Eh^{k+1}_{\defin(q-1)}(X)\]
because the partial derivatives of definable functions are definable.
In the globally subanalytic case, it also induces a well-defined map
\[  d:\Eh^k_{\con(q)}(X)\to \Eh^{k+1}_{\con(q-1)}(X)\]
because the derivative of a constructible function is constructible, see the end of Section~\ref{1.1}.
\end{rem}

\section{De Rham cohomology}

\noindent
Let $p\in \IN\cup\{\infty,\omega\}$ and let $X$ be a definable $C^p$-manifold of dimension $n$. Let
$0\leq q\leq p-1$. This includes the case $p=1$ and $q=0$.

\begin{defn}\label{def:Omega-def-con-forms}
Let $q>0$ and $k\in\IN_0$.
	\begin{itemize}
		\item[(a)] We denote by $\Omega^k_{\defin(q)}(X)$ the set of all $\omega\in \Eh^k_{\defin(q)}(X)$ such that $d\omega\in\Eh^{k+1}_{\defin(q)}(X)$.
		\item[(b)] If $X$ is globally subanalytic,
we denote by $\Omega^k_{\con(q)}(X)$ the set of all $\omega\in \Eh^k_{\con(q)}(X)$ such that $d\omega\in\Eh^{k+1}_{\defin(q)}(X)$.
	\end{itemize}
\end{defn}

The case $q=0$ can be handled as well.
\begin{defnrem}
	\begin{itemize}
		\item[(1)]
Let $\omega\in \Eh^k_{\defin(0)}(X)$. Then there is a definable dense open subset $U$ of $X$ such that $\omega|_U\in \Eh^k_{\defin(1)}(U)$. 
We call $U$ a \emph{definable $C^1$-zone} for $\omega$.
\item[(2)] Assume that $X$ is globally subanalytic. Let $\omega\in \Eh^k_{\con(0)}(X)$. Then there is a globally subanalytic dense open subset $U$ of $X$ such that $\omega|_U\in \Eh^k_{\con(1)}(U)$. 
We call $U$ a \emph{globally subanalytic $C^1$-zone} for $\omega$.
	\end{itemize}
\end{defnrem}

\begin{defn}\label{def:Omega-con-forms-q=0}
	Let $q=0$ and $k\in\IN_0$.
	\begin{itemize}
		\item[(a)] We denote by $\Omega^k_{\defin(0)}(X)$ the set of all $\omega\in \Eh^k_{\defin(0)}(X)$ such that
		there is a definable $C^1$-zone $U$ for $\omega$ and some $\eta\in \Eh^{k+1}_{\defin(0)}(X)$ such that $d(\omega|_U)=\eta|_U$.
		\item[(b)] Assume that $X$ is globally subanalytic.
	 We denote by $\Omega^k_{\con(0)}(X)$ the set of all $\omega\in \Eh^k_{\con(0)}(X)$ such that
	there is a globally subanalytic $C^1$-zone $U$ for $\omega$ and some $\eta\in \Eh^{k+1}_{\con(0)}(X)$ such that $d(\omega|_U)=\eta|_U$.
	\end{itemize}
\end{defn}

\noindent
In the globally subanalytic case we will write $\Omega^k_{\sub(q)}(X)$ instead of $\Omega^k_{\defin(q)}(X)$.

\begin{rem}
	Let $k\in \IN_0$.
	\begin{itemize}
		\item[(1)] $\Omega^k_{\defin(q)}(X)$ is an $\IR$-vector subspace of $\Eh^k_{\defin(q)}(X)$. 
		\item[(2)] Assume that $X$ is globally subanalytic. Then $\Omega^k_{\con(q)}(X)$ is an $\IR$-vector subspace of $\ma{E}^k_{\con(q)}(X)$.
	\end{itemize}
\end{rem}

\begin{proof}
	The case $q>0$ being clear, we show the case $q=0$. Here we show (2), the proof for (1) being identical.
	Let $\omega_1,\omega_2\in \Omega^k_{\con(0)}(X)$ and $\lambda_1,\lambda_2\in \IR$. For $i\in \{1,2\}$ choose globally subanalytic $C^1$-zones $U_i$ for $\omega_i$ and $\eta_i\in \Eh^{k+1}_{\con(0)}(X)$ such that $d(\omega_i|_{U_i})=\eta_i|_{U_i}$.
	Then $U:=U_1\cap U_2$ is a globally subanalytic $C^1$-zone for both $\omega_1$ and $\omega_2$.
	Let $\omega:=\lambda_1\omega_1+\lambda_2\omega_2\in \Eh^k_{\con(0)}(X)$ and $\eta:=\lambda_1\eta_1+\lambda_2\eta_2\in \Eh^{k+1}_{\con(0)}(X)$.
	We have that
	$$d(\omega|_U)=\lambda_1d(\omega_1|_U)+\lambda_2d(\omega_2|_U)=\lambda_1\eta_1|_U+\lambda_2\eta_2|_U=\eta|_U$$
	and obtain that $\omega\in \Omega^k_{\con(0)}(X)$.
\end{proof}

\begin{rem}\label{R 2.5}
	Let $k\in \IN_0$.
	\begin{itemize}
		\item[(1)] We have $\Eh^k_{\defin(q+1)}(X)\subset \Omega^k_{\defin(q)}(X)$. Furthermore, $\Omega^k_{\defin(q)}(X)$ is a module over the ring $C_\defin^{q+1}(X)$ of definable $C^{q+1}$-functions on $X$.
		\item[(2)] Assume that $X$ is globally subanalytic. We have $\Eh^k_{\con(q+1)}(X)\subset \Omega^k_{\con(q)}(X)$.  Furthermore, $\Omega^k_{\con(q)}(X)$ is a module over the ring $C_\con^{q+1}(X)$ of constructible $C^{q+1}$-functions on $X$.

	\end{itemize}
\end{rem}
	

\begin{rem}
	Let $q=0$ and $k\in \IN_0$.
	\begin{itemize}
		\item[(1)] Let $\omega\in \Omega^k_{\defin(0)}(X)$. Let $U,U'$ be definable $C^1$-zones for $\omega$ and let $\eta,\eta'\in \ma{E}^{k+1}_{\defin(0)}(X)$ be such that $d(\omega|_U)=\eta|_U$ and $d(\omega|_{U'})=\eta'|_{U'}$.
		Then $\eta=\eta'$.
		\item[(2)] Let $\omega\in \Omega^k_{\con(0)}(X)$. Let $U,U'$ be globally subanalytic $C^1$-zones for $\omega$ and let $\eta,\eta'\in \ma{E}^{k+1}_{\con(0)}(X)$ be such that $d(\omega|_U)=\eta|_U$ and $d(\omega|_{U'})=\eta'|_{U'}$.
		Then $\eta=\eta'$.
	\end{itemize}
\end{rem}
\begin{proof}
	Let $V:=U\cap U'$. Then $V$ is dense in $X$.
	We have that
	$\eta|_V=d(\omega|_V)=\eta'|_V$. The forms $\eta$ and $\eta'$ being continuous forms on $X$ that agree on the dense open subset $V$ of $X$, must agree everywhere, in other words $\eta = \eta'$. 
\end{proof}

\begin{rem}
	Let $q=0$ and $k\in \IN_0$.
		\begin{itemize}
		\item[(1)] Let $\omega\in \Omega^k_{\defin(0)}(X)$. Let $U$ be a definable $C^1$-zone for $\omega$ and let $\eta\in \ma{E}^{k+1}_{\defin(0)}(X)$ be such that $d(\omega|_U)=\eta|_U$. Then $\eta\in \Omega^{k+1}_{\defin(0)}(X)$. 
		\item[(2)] Let $\omega\in \Omega^k_{\con(0)}(X)$. Let $U$ be a globally subanalytic $C^1$-zons for $\omega$ and let $\eta,\in \ma{E}^{k+1}_{\con(0)}(X)$ be such that $d(\omega|_U)=\eta|_U$.
		Then $\eta\in \Omega^{k+1}_{\con(0)}(X)$. 
\end{itemize}
	\end{rem}
	\begin{proof}
	Let $\ma{A}$ be a definable $C^p$-atlas for $X$. 
    Let $U$ be a $C^1$-zone for both $\omega$ and $\eta$.
    Let $\phi:V\xrightarrow{\sim} W$ be a chart from $\ma{A}$. We show that $d\zeta=0$ where $\zeta:=\eta|_{U\cap V}$ and are done.
    It suffices to show that $\psi^*d\zeta=0$ where $\psi:=\phi^{-1}:W\to V$. 
    There is an open and dense globally subanalytic subset $W'$ of $\phi(U\cap V)$ such that $\psi^*(\omega|_V)$ is $C^2$ on $W'$.
    Let $V':=\phi^{-1}(W')$. Note that $V'$ is open and dense in $U\cap V$.
    We obtain 
    $$(\psi^*d\zeta)|_{W'}=d\psi^*(\zeta|_{V'})=d\psi^*(\eta|_{V'})=d\psi^*d(\omega|_{V'})=d^2\psi^*(\omega|_{V'})=0$$
    and are done.
   \end{proof}

\noindent By the previous remarks the following is well-defined.

\begin{defn}\label{def:exterior-derivative-Omega-q=0}
	Let $q=0$ and $k\in \IN_0$.
	\begin{itemize}
		\item[(a)] We define $D:\Omega^k_{\defin(0)}(X)\to \Omega^{k+1}_{\defin(0)}(X)$ as follows: Let $\omega\in \Omega^k_{\defin(0)}(X)$.
		Let $U$ be a definable $C^1$-zone for $\omega$ and $\eta\in \ma{E}^{k+1}_{\defin(0)}(X)$ be such that $d(\omega|_U)=\eta|_U$. Then we set $D\omega:=\eta$. 
		\item[(b)]  Assume that $X$ is globally subanalytic. We define $D:\Omega^k_{\con(0)}(X)\to \Omega^{k+1}_{\con(0)}(X)$ as follows: Let $\omega\in \Omega^k_{\con(0)}(X)$.
		Let $U$ be a globally subanalytic $C^1$-zone for $\omega$ and $\eta\in \ma{E}^{k+1}_{\con(0)}(X)$ be such that $d(\omega|_U)=\eta|_U$. Then set $D\omega:=\eta$. 
	\end{itemize}
\end{defn}

\noindent 
Note that $D\circ D=0$ by the above proof. We occasionally write also $D$ for $d$ in the case $q>0$.
Recall that $n=\dim(X)$.

\begin{rem}
Let $*\in \{\defin,\con\}$. Assume that $X$ is globally subanalytic if $*=\con$.
\begin{itemize}
	\item[(1)] Let $q=\infty$ or $q=\omega$. Then $\Omega^k_{*(q)}(X)=\Eh^k_{*(q)}(X)$ for every $k\in \IN_0$.
	\item[(2)] Let $q<\infty$. We have $\Omega^0_{*(q)}(X)=\Eh^0_{*(q+1)}(X)$ and 
	$\Omega^n_{*(q)}(X)=\Eh^n_{*(q)}(X)$.
\end{itemize}
\end{rem}
\begin{proof}
(1) and (2) in the case $q>0$ are clear. We show (2) for $q=0$ in the case $*=\con$, the case $*=\defin$ being the same.

\vs{0.2cm}\noindent
$\Omega^0_{\con(0)}(X)=\Eh^0_{\con(1)}(X)$: 

\noindent By Remark \ref{R 2.5} we have $\Eh^0_{\con(1)}(X)\subset \Omega^0_{\con(0)}(X)$.
For the other inclusion let $\omega\in \Omega^0_{\con(0)}(X)$. Then $\omega:X\to \IR$ is a continuous constructible function. By passing to a chart, we may assume that $X$ is an open globally subanalytic subset of $\IR^n$.  There is an open and dense globally subanalytic subset $U$ of $X$ such that $\omega|_U$ is $C^1$ and for each $i\in \{1,\ldots,n\}$ there is a continuous constructible function $g_i:X\to \IR$ with $\partial \omega|_U/\partial x_i=g_i$.
Let $B:=X\setminus U$. Then $B$ is a globally subanalytic subset of $X$ with $\dim(B)<n$.
We have to show that $\omega$ is differentiable at every $p\in B$ with $(\partial \omega/\partial x_i)(p)=g_i(p)$ for every $i\in \{1,\ldots,n\}$.
By the good directions lemma \cite[Theorem (4.2) in Chapter 7]{van-den-Dries-tame-topology} the set of all  $v\in \IR^n\setminus \{0\}$ such that for every $p\in B$ there is $r>0$ with
$p+tv\in U$ for every $t\in (-r,r)\setminus \{0\}$ is dense in $\IR^n$.
Hence we can find a basis of such vectors.
By applying a suitable \emph{linear} coordinate transformation we can assume that this property holds for the unit vectors. Let $p\in B$. Without restriction we can assume that $p=0$. We are done by applying the following classical result from analysis (see for example \cite[Exercise 5.16]{apostol}).
Let $f:(-1,1)\to \IR$ be a continuous function with the following properties:
	\begin{itemize}
		\item[(1)] The function $f$ is differentiable on $(-1,1)\setminus\{0\}$.
		\item[(2)] There is a continuous function $g:(-1,1)\to \IR$ such that $f'(x)=g(x)$ for all $x\in (-1,1)\setminus\{0\}$.
	\end{itemize}
	Then $f$ is differentiable in $0$ with $f'(0)=g(0)$.

\vs{0.2cm}\noindent
$\Omega^n_{\con(0)}(X)=\Eh^n_{\con(0)}(X)$:

\noindent
By definition we have $\Omega^n_{\con(0)}(X)\subset \Eh^n_{\con(0)}(X)$. Let $\omega\in \Eh^n_{\con(0)}(X)$ and let $U$ be a globally subanalytic $C^1$-zone for $\omega$. We have $d(\omega|_U)=0$. Hence $\omega\in \Omega^n_{\con(0)}(X)$. 
\end{proof}

\noindent
In the case $q<\infty$ the situation is different for $0<k<n$:

\begin{ex}
	Let $q<\infty$. We have that $\Eh^1_{\sub(q+1)}(\IR^2)\subsetneq \Omega^1_{\sub(q)}(\IR^2)\subsetneq \Eh^1_{\sub(q)}(\IR^2)$.
\end{ex}

	\begin{proof}
    $\Eh^1_{\sub(q+1)}(\IR^2)\subsetneq \Omega^1_{\sub(q)}(\IR^2)$:

	\noindent
	By Remark \ref{R 2.5} we have  $\Eh^1_{\sub(q+1)}(\IR^2)\subset \Omega^1_{\sub(q)}(\IR^2)$.
    Set $$c:\IR\to \IR,
	t\mapsto \left\{\begin{array}{ccc}
		0,&&t\leq 0,\\
		&\mbox{if}&\\
		t^{q+1},&&t\geq 0.
	\end{array}\right.$$
	Then $c$ is a globally subanalytic $C^q$-function which is not $C^{q+1}$.
	Set $\omega:=c(x_1)dx_1$.
	Then $\omega\in \ma{E}_{\sub(q)}^1(\IR^2)\setminus \ma{E}_{\sub(q+1)}^1(\IR^2)$. 
    The set $U:=\IR^2\setminus (\{0\}\times \IR)$ is a globally subanalytic $C^1$-zone for $\omega$. We have $d(\omega|_U)=0$. Hence
    $\omega\in \Omega_{\sub(q)}^1(\IR^2)$. 

	\vs{0.2cm}\noindent
	$\Omega^1_{\sub(q)}(\IR^2)\subsetneq \Eh^1_{\sub(q)}(\IR^2)$:
	
	\noindent
	By Definition we have $\Omega^1_{\sub(q)}(\IR^2)\subset \Eh^1_{\sub(q)}(\IR^2)$.
	Let $c$ be as above and set
	$\omega:=c(x_1)dx_2$. Then $\omega\in \ma{E}^1_{\sub(q)}(\IR^2)$. We have that $U:=\IR^2\setminus (\{0\}\times \IR)$ is a globally subanalytic $C^1$-zone for $\omega$ and that
	$d\omega(x)=c'(x_1)dx_1\wedge dx_2$ for $x\in U$. The function $c'(t)$ is not $C^q$ on $\R$. Hence $d(\omega|_U)$ cannot be extended to a form in $\Eh^2_{\sub(q)}(\IR^2)$ and therefore $\omega\in \ma{E}^1_{\sub(q)}(\IR^2)\setminus \Omega^1_{\sub(q)}(\IR^2)$.
	
\end{proof}

Recall that $p\in \IN\cup\{\infty,\omega\}$ and $0\leq q\leq p-1$.

\begin{defn}\label{def:de-rham-complex-definable-const}
	\begin{itemize}
		\item[(a)] We define the 
		\emph{definable de Rham complex} $(\Omega^\bullet_{\defin(q)}(X),D)$ of $X$ by
		$$0\to \Omega^0_{\defin(q)}(X)\xrightarrow{D} \Omega^1_{\defin(q)}(X)\xrightarrow{D} \ldots\xrightarrow{D} \Omega^n_{\defin(q)}(X)\to 0$$
		with the
		\emph{definable $k^\mathrm{th}$ de Rham cohomology group of $X$} as the $k^\mathrm{th}$-cohomology group of the above complex of real vector spaces, i.e. \[H^k_{\mathrm{dR},\defin(q)}(X) := \ker(\Omega^k_{\defin(q)}(X)\xrightarrow{D} \Omega^{k+1}_{\defin(q)}(X))/\mathrm{im}(\Omega^{k-1}_{\defin(q)}(X)\xrightarrow{D} \Omega^{k}_{\defin(q)}(X)).\]
		\item[(b)] In the case $\Mh=\R_\an$, we define the
		\emph{constructible de Rham complex} $(\Omega^\bullet_{\con(q)}(X),D)$ of $X$ by
$$0\to \Omega^0_{\con(q)}(X)\xrightarrow{D} \Omega^1_{\con(q)}(X)\xrightarrow{D} \ldots\xrightarrow{D} \Omega^n_{\con(q)}(X)\to 0$$
		with the
		\emph{constructible de Rham cohomology groups} $H^k_{\mathrm{dR},\con(q)}(X)$ as the $k^\mathrm{th}$-cohomology group of the above complex.
	\end{itemize}
\end{defn}

In the globally subanalytic case we write $H^\bullet_{\mathrm{dR},\sub(q)}(X)$.

For general o-minimal structures, the definable de Rham cohomology  does \emph{not}
agree with singular cohomology, as seen in the following example.

\begin{ex} 
	\label{not computing}
	Let $I=(0,1)$ be the open unit interval. We have that $H^1_{\mathrm{dR},\mathrm{sub}(q)}(I) \neq 0$ for every $q\in \IN_0\cup\{\omega\}$.
\end{ex}

\begin{proof}
	Let $\omega:=dx/x\in \Omega^1_{\mathrm{sub}(q)}(I)$.
	Then $d\omega=0$ but there is no $f\in \Omega^0_{\mathrm{sub}(q)}(I)$ with $df=\omega$.
	The $C^\omega$-function $g:I\to \IR, x\mapsto\log x,$ which fulfills $dg=\omega$ is not globally subanalytic.
\end{proof}

It will turn out that the situation is much better in the constructible setting.

\section{Basic Results}

Let $p\in \IN\cup\{\infty,\omega\}$ and $X,Y$ be definable $C^p$-manifolds and let $f:Y\to X$ be a definable $C^p$-map. Let $0\leq q\leq p-1$.

\subsection{Functoriality}

In the case $q>0$ we have the usual functoriality.

\begin{rem}\label{functoriality_easy}
Let $q>0$ and $k\in \IN_0$.
	\begin{itemize}
		\item[(1)] We have the well-defined map $\Omega^k_{\defin(q)}(X)\to \Omega^k_{\defin(q)}(Y), \omega\mapsto f^*\omega,$
		with $D(f^*\omega)=f^*(D\omega)$.
		\item[(2)] In the globally subanalytic case we have the well-defined map $\Omega^k_{\con(q)}(X)\to \Omega^k_{\con(q)}(Y), \omega\mapsto f^*\omega,$ with $D(f^*\omega)=f^*(D\omega)$.
	\end{itemize}
\end{rem}

Our aim is to establish functoriality also in the case $q=0$. We will need some preparation.

\begin{prop}\label{prop:analysis}
Let $M$ be a $C^2$-manifold with boundary.  We denote by $\iota: \partial M\to M$ the inclusion of the boundary.
Assume that
$\omega$ is a continuous $k$-form on $M$ whose restriction $\omega|_{M^\circ}$ to the interior $M^\circ$ of $M$ and $\iota^*\omega$ to the boundary $\partial M$ are $C^1$. Let, moreover, $\eta$ be a continuous $(k+1)$-form on $M$ such that
$d\omega|_{M^\circ}=\eta|_{M^\circ}$. Then 
\[ d(\iota^*\omega)=\iota^*\eta.\]
\end{prop}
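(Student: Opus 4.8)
The plan is to reduce the statement to a purely local computation near boundary points. The assertion that $d(\iota^*\omega) = \iota^*\eta$ is a statement about continuous forms on $\partial M$, so it suffices to verify it in a neighborhood of an arbitrary point $p \in \partial M$; moreover it suffices to check equality on a dense open subset of $\partial M$, since both sides are continuous forms on $\partial M$. First I would pass to a boundary chart, identifying a neighborhood of $p$ with an open subset $\Omega \subset \{x_n \geq 0\}$ in $\R^n$, with $\partial M$ corresponding to $\{x_n = 0\}$ and $M^\circ$ to $\{x_n > 0\}$. In this chart write $\omega = \sum_I a_I \, dx_I$ with the $a_I$ continuous on $\Omega$, of class $C^1$ on $\{x_n > 0\}$ and such that their restrictions to $\{x_n = 0\}$ are $C^1$ as functions of $(x_1,\dots,x_{n-1})$.

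The key analytic input is the following one-variable fact: if $f$ is continuous on $[0,\varepsilon)$, of class $C^1$ on $(0,\varepsilon)$, and $f'$ extends continuously to $[0,\varepsilon)$, then $f$ is right-differentiable at $0$ with the expected derivative — this is the half-line version of the classical lemma already invoked in the excerpt (the analogue of \cite[Exercise 5.16]{apostol}). Applying this to $t \mapsto a_I(x_1,\dots,x_{n-1},t)$ for fixed $(x_1,\dots,x_{n-1})$ shows that $\partial a_I / \partial x_n$ extends continuously up to the boundary, with boundary value given by the corresponding coefficient of $\eta$. Next I would show that for indices $i < n$, the partial derivative $\partial a_I/\partial x_i$ of the boundary restriction equals the boundary value of $\partial a_I/\partial x_i$ computed in the interior: this follows because $\iota^* \omega$ is assumed $C^1$, so $\partial (a_I|_{x_n=0})/\partial x_i$ exists and is continuous, and it must agree with the continuous extension of the interior derivative $\partial a_I/\partial x_i$ by a density/continuity argument (the interior derivative extends continuously since it is a coefficient of the continuous form $\eta$, up to the lower-order terms).

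With these coefficient-level identities in hand, the computation of $d(\iota^*\omega)$ is a direct expansion: $d(\iota^*\omega) = \sum_I \sum_{i<n} \frac{\partial (a_I|_{x_n=0})}{\partial x_i}\, dx_i \wedge dx_I$, while $\iota^* \eta = \iota^*(d\omega) = \iota^*\big(\sum_I \sum_i \frac{\partial a_I}{\partial x_i}\, dx_i \wedge dx_I\big)$, and pulling back along $\iota$ kills every term containing $dx_n$, leaving exactly the terms with $i < n$; the surviving coefficients match by the identities above. Patching over a dense open set of $\partial M$ (avoiding, if necessary, the locus where the $C^1$ hypotheses degenerate) and using continuity of both sides gives the equality on all of $\partial M$.

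The main obstacle is the second coefficient identity — matching the \emph{tangential} derivatives of the boundary restriction with the boundary limits of the interior derivatives. One has $C^1$-regularity of $\iota^*\omega$ as a hypothesis and $C^1$-regularity of $\omega$ on $M^\circ$, but a priori nothing forces the interior tangential derivatives to have boundary limits at all, let alone the correct ones; this is where the continuity of $\eta$ (hence of $d\omega$ up to the boundary) must be leveraged, presumably together with the normal-derivative control from the first step and an argument that a $C^1$ function on $M^\circ$ whose full gradient extends continuously to $\partial M$ is in fact $C^1$ up to the boundary. I expect this to require a careful mean-value-theorem estimate rather than a one-line deduction.
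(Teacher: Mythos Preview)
Your localization step and coordinate setup are fine and match the paper's. The gap is in the analytic core. For $k\geq 1$ the coefficients of $\eta$ are \emph{alternating sums} of partial derivatives of the $a_I$, not individual partial derivatives: the coefficient $b_{j_1\dots j_{k+1}}$ of $\eta$ equals $\sum_{i=1}^{k+1}(-1)^{i-1}\partial a_{j_1\dots\hat{\jmath_i}\dots j_{k+1}}/\partial x_{j_i}$. Continuity of $\eta$ therefore gives you continuity of these sums only, not of any single $\partial a_I/\partial x_n$ or $\partial a_I/\partial x_i$. So your step~(3) misfires: the one-variable lemma needs the normal derivative itself to extend continuously, but no single coefficient of $\eta$ isolates $\partial a_I/\partial x_n$ when $k\geq 1$. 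The same obstruction blocks step~(4): you cannot conclude that interior tangential derivatives have boundary limits, and your proposed rescue (``full gradient extends continuously, hence $C^1$ up to the boundary'') rests on normal-derivative control you do not actually have. You correctly flag this as the main obstacle, but the mean-value-theorem route you sketch will not close it, because there is nothing forcing the individual $\partial_i a_I$ to be bounded near the boundary.

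The paper avoids this entirely by a different device. After reducing (as you do) to a single multi-index $J'=(1,\dots,k+1)$ not containing the normal direction, it packages the relevant $a$'s into a vector field $A^t=(a_1^t,-a_2^t,\dots,(-1)^k a_{k+1}^t)$ on $(-1,1)^{k+1}$ for each fixed normal parameter $t\geq 0$, so that the hypothesis reads $b^t=\operatorname{div}(A^t)$ for $t>0$ and the claim is the same identity at $t=0$. It then evaluates $\operatorname{div}(A^0)(0)$ via Gauss's theorem on small balls $\mathbb{B}(r)$: the divergence integral becomes a flux integral $\int_{\partial\mathbb{B}(r)}\langle A^0,x\rangle$, which involves only the \emph{values} of $A^0$. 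Since $A^t\to A^0$ uniformly on compacta by continuity of $\omega$, one can replace $A^0$ by $A^t$, apply Gauss back at level $t>0$ (where the divergence is $b^t$), let $t\to 0$ using continuity of $b$, and finally let $r\to 0$. At no point does one need any individual partial derivative of the $a_I$ to extend to the boundary; the integral identity transfers the whole alternating sum at once. This is the missing idea in your proposal.
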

\begin{proof}
The question is local on $\partial M$, hence it suffices to consider the case
$M=[0,1)\times (-1,1)^l$ and $\partial M=\{0\}\times (-1,1)^l$. We use the coordinates $x_0,\dots,x_l$. We spell out the claim. 

We have (in multi-index notation)
\[ \omega=\sum_{J\in \ma{J}}a_Jdx^J, \quad\eta=\sum_{J'\in \ma{J'}}b_{J'}dx^{J'}\]
where 
\begin{align*}
\ma{J}&:=\left\{(j_1,\ldots,j_k)\mid 0\leq j_1<\dots<j_k\leq l\right\},\\
\ma{J'}&:=\left\{(j_1,\ldots,j_{k+1})\mid 0\leq j_1\dots<j_{k+1}\leq l\right\}.
\end{align*}
By assumption for every $J\in \ma{J}$ respectively $J'\in \ma{J}'$, the functions $a_J$ respectively $b_{J'}$ are continuous and
even $C^1$ for $x_0>0$. We denote by $\omega^0$ and $\eta^0$ the restriction of $\omega$ respectively $\eta$
to $\{x_0=0\}$. Note that $\iota^*dx_0$ vanishes on $\partial M$. Hence  we have
\[ \omega^0=\sum_{J\in \tilde{\ma{J}}}a^0_J dx^J, \quad\eta^0=\sum_{J'\in \tilde{\ma{J'}}}b^0_{J'}dx^{J'}\]
where
\begin{align*}
\tilde{\ma{J}}&:=\left\{(j_1,\ldots,j_k)\mid 1\leq j_1<\dots<j_k\leq l\right\},\\
\tilde{\ma{J'}}&:=\left\{(j_1,\ldots,j_{k+1})\mid 1\leq j_1\dots<j_{k+1}\leq l\right\}.
\end{align*} 
  and 
\[ a^0_J(x_1,\dots,x_l)=a_J(0,x_1,\dots,x_l), \quad b^0_{J'}(x_1,\dots,x_l)=b_{J'}(0,x_1,\dots,x_l)\]
for $J\in \tilde{\ma{J}}$ respectively $J'\in \tilde{\ma{J'}}$.
We assume that
$d\omega=\eta$
for $x_0>0$, i.e., for all $0\leq j_1<\ldots<j_{k+1}\leq l$ and for all $(x_0,x)\in (0,1)\times (-1,1)^l$ 
\[
 b_{j_1\dots j_{k+1}}(x_0,x)=\sum_{i=1}^{k+1}(-1)^{i-1}\frac{\partial a_{j_1\dots\hat{j_i}\dots j_{k+1}}}{\partial x_{j_i}}(x_0,x)
\]
and claim
$d\omega^0=\eta^0,$
i.e., for all $1\leq j_1<\dots<j_{k+1}\leq l$ 
\[ b^0_{j_1\dots j_{k+1}}(x)=\sum_{i=1}^{k+1}(-1)^{i-1}\frac{\partial a^0_{j_1\dots\hat{j_i}\dots j_{k+1}}}{\partial x_{j_i}}(x)\]
for every $x\in (-1,1)^l$.
Without loss of generality, we consider $j_1=1,\dots,j_{k+1}=k+1$. 
Without loss of generality, it suffices
to compare the values in $0$.
We simplify notation:
\begin{align*}
 a_i^t(x_1,\dots,x_{k+1})&:=a_{1\dots\hat{i}\dots k+1}(t,x_1,\dots,x_{k+1},0,\dots,0), \\
b^t(x_1,\dots,x_{k+1})&:=b_{1\dots k+1}(t,x_1,\dots,x_{k+1},0,\dots,0).
\end{align*} 
Note that $a_i^t$ and $b^t$ are continuous as functions in $(t,x_1,\dots,x_{k+1})$
and $C^1$ for fixed $t\in [0,1)$. We introduce the vector field
\[ A^t=(a_1^t,-a_2^t,\dots,(-1)^{k}a_{k+1}^t).\]
The assumption reads
\begin{equation}\label{formula} b^t=\sum_{i=1}^{k+1}(-1)^i\frac{\partial a_i^t}{\partial x_i}=\div(A^t)\end{equation}
for $t>0$ and the claim is the same equality \eqref{formula} for $t=0$.

We apply Gauss's Theorem to the vector fields $A^t$ and the balls
$\IB(r)\subset (-1,-1)^{k+1}$  of radius $r<1$:
\begin{align*}
\div(A^0)(0)&= \lim_{r\to 0}\frac{1}{\vol(\IB(r))}\int_{\IB(r)}\div(A^0)d\IB(r)\\
         &=\lim_{r\to 0}\frac{1}{\vol(\IB(r))}\int_{\partial \IB(r)}\langle A^0(x),x\rangle d\partial \IB(r)\\
         &=\lim_{r\to 0}\lim_{t\to 0}\frac{1}{\vol(\IB(r))}\int_{\partial \IB(r)}\langle A^t(x),x\rangle d\partial \IB(r)\\
         &=\lim_{r\to 0}\lim_{t\to 0}\frac{1}{\vol(\IB(r))}\int_{\IB(r)}\div (A^t)d\IB(r)\\
    &=\lim_{r\to 0}\lim_{t\to 0}\frac{1}{\vol(\IB(r))}\int_{\IB(r)}b^td\IB(r)\\
    &=\lim_{r\to 0}\frac{1}{\vol(\IB(r))}\int_{\IB(r)}b^0d\IB(r)\\
    &=b^0(0)
\end{align*}
as claimed. 
\end{proof}

\begin{prop}\label{prop:stratification}
\begin{itemize}
	\item[(1)]
Let $\ma{D}$ be a finite set of definable subsets of $X$.
Let $\omega\in\Omega^k_{\defin(0)}(X)$ and set $\eta:=D\omega\in \Omega^{k+1}_{\defin(0)}(X)$. Then
there is a stratification of $X$, compatible with $\ma{D}$, into finitely many definable $C^1$-submanifolds $\iota_S:S\to X$ such that
$\iota_S^*\omega\in\Omega^k_{\defin(0)}(S)$  with $D(\iota_S^*\omega)=\iota_S^*\eta$.
\item[(2)] 
Assume that $X$ is globally subanalytic. Let $\ma{D}$ be a finite set of 
globally subanalytic subsets of $X$.
Let $\omega\in\Omega^k_{\con(0)}(X)$ and set $\eta:=D\omega\in \Omega^{k+1}_{\con(0)}(X)$. Then
there is a stratification of $X$, compatible with $\ma{D}$, into finitely many globally subanalytic $C^1$-submanifolds $\iota_S:S\to X$ such that
$\iota_S^*\omega\in\Omega^k_{\con(0)}(S)$  with $D(\iota_S^*\omega)=\iota_S^*\eta$.
\end{itemize}
\end{prop}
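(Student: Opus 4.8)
We prove (1) and (2) simultaneously, writing $*\in\{\defin,\con\}$ and assuming $X$ globally subanalytic when $*=\con$. The plan is to reduce the statement, for each stratum, to the pointwise identity $d(\iota_S^*\omega)=\iota_S^*\eta$ at generic points, and to establish that identity by pushing a small neighbourhood of the point off the ``bad set'' $X\setminus U$ into a $C^1$-zone $U$ of $\omega$ and then applying the divergence lemma \autoref{prop:analysis}. Concretely, fix a $C^1$-zone $U$ for $\omega$, definable (resp.\ globally subanalytic); since $\eta=D\omega$, one has $d(\omega|_{U'})=\eta|_{U'}$ on any such zone $U'$. By o-minimal cell decomposition we choose a finite stratification of $X$ into definable (resp.\ globally subanalytic) $C^1$-submanifolds $\iota_S\colon S\to X$ refining $\ma D\cup\{U\}$, so that each stratum $S$ either lies in $U$ or is disjoint from it. For every $S$ the form $\iota_S^*\eta$ lies in $\Eh^{k+1}_{*(0)}(S)$, and $\iota_S^*\omega\in\Eh^k_{*(0)}(S)$ possesses a $C^1$-zone $S^\circ$; by \autoref{def:Omega-con-forms-q=0} together with the well-definedness of $D$ (\autoref{def:exterior-derivative-Omega-q=0} and the remarks preceding it) it then suffices to prove, for each $S$, that $d(\iota_S^*\omega)=\iota_S^*\eta$ on a dense subset of $S^\circ$ — both sides being continuous forms on $S^\circ$ — whence $\iota_S^*\omega\in\Omega^k_{*(0)}(S)$ and $D(\iota_S^*\omega)=\iota_S^*\eta$.

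If $S\subseteq U$, then at a generic $x\in S^\circ$ we may pass to a chart in which, on a dense subset, $S$ appears as a $C^2$-submanifold and $\omega$ is $C^1$; naturality of the exterior derivative under the $C^2$-embedding $\iota_S$ yields $d(\iota_S^*\omega)=\iota_S^*(d\omega)=\iota_S^*\eta$ near $x$, since $d\omega=\eta$ on $U$. Now suppose $S\cap U=\emptyset$, so that $S\subseteq X\setminus U$. Fix a generic $x\in S^\circ$, set $d:=\dim S$, and pass to a chart $W\subseteq\R^n$ in which $\iota_S^*\omega$ is $C^1$ and, after a $C^2$-change of coordinates, $S$ is near $x$ the flat plane $\tilde S=(\R^d\times\{0\})\cap W$. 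Put $Z:=W\setminus U\supseteq\tilde S$, a closed definable set of dimension $\le n-1$. By the good directions lemma \cite[Ch.~7, (4.2)]{van-den-Dries-tame-topology} we fix $v\in\R^n\setminus(\R^d\times\{0\})$ such that every $y\in Z$ admits an escape radius $r_y>0$ with $y+tv\notin Z$ for $0<t<r_y$; shrinking $W$ around $x$ we may assume that $y\mapsto r_y$ is a positive definable $C^2$-function on $\tilde S$. Then
\[
 M:=\bigl\{\, y+\tfrac{t}{2}\,r_y\,v \;:\; y\in\tilde S,\ t\in[0,1) \,\bigr\}
\]
is a $C^2$-manifold with boundary $\partial M=\tilde S$ and interior $M^\circ\subseteq U$. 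The hypotheses of \autoref{prop:analysis} hold for $M$, $\omega|_M$ and $\eta|_M$: one has $d(\omega|_{M^\circ})=\eta|_{M^\circ}$ because $M^\circ\subseteq U$, and $\omega|_{\partial M}=(\iota_S^*\omega)|_{\tilde S}$ is $C^1$. We conclude $d(\iota_S^*\omega)(x)=(\iota_S^*\eta)(x)$, and letting $x$ range over the dense set of generic points of $S^\circ$ finishes the case, hence the proof of (1).

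Part (2) follows by the same argument: cell decomposition, straightening of submanifolds, the good directions lemma and the divergence lemma are all available over $\R_\an$, pullback along a globally subanalytic map preserves constructibility of coefficients, and the exterior derivative of a constructible form is constructible (end of Section~\ref{1.1}); thus $\iota_S^*\omega\in\Eh^k_{\con(0)}(S)$, $\iota_S^*\eta\in\Eh^{k+1}_{\con(0)}(S)$, and the zone $S^\circ$ may be taken globally subanalytic.

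The step I expect to be the main obstacle is the construction of the half-disk $M$ with $M^\circ\subseteq U$: one has to push a neighbourhood of $x$ in $S$ clear of the bad set $X\setminus U$ while keeping the resulting family $C^2$, which is exactly why both the good directions lemma and the restriction to generic $x$ (where the escape radius $r_y$ is smooth in $y$) are needed. A minor secondary point is the case $p=1$, in which $X$ carries no $C^2$-atlas, so that the chart $W$, the plane $\tilde S$ and the manifold $M$ must be manipulated entirely inside $\R^n$; this causes no trouble, since the asserted identity is local on $S$ and only $\R^n$, not $X$, has to be $C^2$.
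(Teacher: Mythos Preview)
Your proof is correct and follows essentially the same route as the paper's: reduce to charts, stratify compatibly with a $C^1$-zone $U$, and for each lower-dimensional stratum $S$ use the good directions lemma to build a $C^2$-half-disk $M$ with $\partial M\subset S$ and $M^\circ\subset U$, then invoke \autoref{prop:analysis}. The only organisational difference is that the paper constructs the stratification inductively so that each stratum is already $C^2$ with $\iota_S^*\omega$ of class $C^1$ and comes equipped with its half-disk, whereas you fix a $C^1$-stratification first and then verify the identity $d(\iota_S^*\omega)=\iota_S^*\eta$ pointwise at generic points, relying on continuity on the $C^1$-zone $S^\circ$ to globalise; both arrangements yield the stated conclusion.
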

\begin{proof}
We show (2), the proof for (1) being the same.
By enlarging the set $\ma{D}$ we can assume that the domains of a given globally subanalytic $C^p$-atlas of $X$ are contained. Hence it is both sufficient and necessary to consider a chart of that atlas.
Hence we may assume that $X$ is an open globally subanalytic subset of $\IR^n$. There is a  family $\ma{S}_0$ of finitely many pairwise disjoint open globally subanalytic subsets of $X$ which are compatible with $\ma{D}$ such that $\bigcup_{S_0\in \ma{S}_0}S_0$ is a globally subanalytic $C^1$-zone for $\omega$. 

Then the complement $X\setminus \bigcup_{S_0\in \ma{S}_0}S_0$ has smaller dimension. 
It contains a dense and relatively open globally subanalytic subset $T'$ which is a $C^2$-submanifold of $X$. Let $m$ be its dimension and let $\iota:T'\to X$ be 
the inclusion. 
The form $\iota^*\omega$ is constructible and continuous.
Hence there is a family $\ma{T}$  of finitely many pairwise disjoint globally subanalytic $C^2$-submanifolds of $T'$ of dimension $m$ which are  compatible with $A$  such that $\bigcup_{T\in \ma{T}}T$ is a globally subanalytic $C^1$-zone for $\iota^*\omega$.  

By refining we may assume that for $T\in \ma{T}$ there is  $S_0\in \ma{S}_0$ such that $T$ is contained in the closure of $S_0$.
Fix $T\in \ma{T}$ and such an $S_0$. By the good direction lemma in \cite[Theorem (4.2) in Chapter 7]{van-den-Dries-tame-topology} there is
some $v\in\R^n\setminus\{0\}$ such that for every $Q\in T$ there is $r>0$ such that $Q+tv\in S_0$ for all $t\in(0,r)$. By cell decomposition we find a dense globally subanalytic set $S_T$ which is relatively open in $T$ such that there is a continuous globally subanalytic function $\alpha_T:S_T\to \IR_{>0}$ with $Q+tv\in S_0$ for all $Q\in S_T$ and $0<t<\alpha_T(Q)$.
Note that 
$$M:=\{Q+tv\mid Q\in S_T, t\in [0,\alpha_T(Q))\}$$ is a $C^2$-manifold with boundary. Set  $\ma{S}_1:=(S_T)_{T\in \ma{T}}$.

Now consider $X\setminus \bigcup_{S\in\ma{S}_0\cup\ma{S}_1} S$. Inductively, we construct a sequence of finite families of pairwise disjoint globally subanalytic $C^2$-submanifolds $\ma{S}_0,\dots,\ma{S}_N$ of $X$ such that $\ma{S}=\ma{S}_0\cup\ldots\cup \ma{S}_N$ is a stratification of $X$ compatible with $\ma{D}$ fulfilling 
the following property: For every $T\in \ma{S}$ of dimension less than $\dim X$ there is $S_0\in\ma{S}_0$ and a globally subanalytic $C^2$-submanifold $M\subset S_0$
of dimension $\dim(T)+1$ such that $T\cup M$ is a $C^2$-manifold with boundary.
Moreover, $\iota_X^*\omega$ is $C^1$ for all $S\in\ma{S}$ where $\iota_S:S\to X$ is the inclusion.

Let $S\in \ma{S}$ and let $\iota_S:S\to X$ be the inclusion. We show that $\iota^*_S\omega\in \ma{E}_{\mathrm{con}(1)}^k(S)$ with $d(\iota_S^*\omega)=\iota_S^*\eta$. The case $S\in \ma{S}_0$ is is clear since then $S$ is an open subset of a $C^1$-zone of $\omega$. The case $S\in \ma{S}_j$ for some $j>0$ follows from the above and   Proposition~\ref{prop:analysis}. 
\end{proof}

\begin{thm}\label{thm:functorial}
Let $q=0$ and $k\in \IN_0$. Let $f:Y\to X$ be a definable $C^1$-map of definable $C^1$-manifolds.
\begin{itemize}
	\item[(1)] We have the well-defined map $\Omega^k_{\defin(0)}(X)\to \Omega^k_{\defin(0)}(Y), \omega\mapsto f^*\omega,$
	with $D(f^*\omega)=f^*(D\omega)$.
	\item[(2)] In the case $\ma{M}=\IR_\an$ we have the well-defined map $\Omega^k_{\con(0)}(X)\to \Omega^k_{\con(0)}(Y), \omega\mapsto f^*\omega,$ with $D(f^*\omega)=f^*(D\omega)$.
\end{itemize}
\end{thm}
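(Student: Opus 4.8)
I would prove part~(2) in detail; part~(1) is the same argument with ``constructible'' replaced by ``definable'' and $\R_\an$ by $\ma{M}$, using only that $C^2$-cell decomposition is available in every o-minimal structure. Set $\eta:=D\omega\in\Eh^{k+1}_{\con(0)}(X)$. The assertion is local on source and target: covering $Y$ and $X$ by their finite atlases, and replacing $Y$ by an open piece mapping into a single chart of $X$, I may assume $X\subset\IR^n$ and $Y\subset\IR^m$ are open globally subanalytic sets and $f\colon Y\to X$ a globally subanalytic $C^1$-map; the finitely many local pieces are reassembled at the end, using that a finite union of nowhere dense sets is nowhere dense. In coordinates, the coefficients of $f^*\omega$, respectively $f^*\eta$, are finite sums of products of a coefficient of $\omega$, respectively $\eta$, precomposed with $f$, and of partial derivatives of the components of $f$. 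Since the coefficients of $\omega$ and $\eta$ are continuous and constructible, $f$ is continuous, and the partials of a globally subanalytic map are globally subanalytic, it follows that $f^*\omega\in\Eh^k_{\con(0)}(Y)$ and $f^*\eta\in\Eh^{k+1}_{\con(0)}(Y)$. The candidate for $D(f^*\omega)$ is $f^*\eta$, so by \autoref{def:Omega-con-forms-q=0}(b) it remains to produce a globally subanalytic dense open $U'\subset Y$ with $(f^*\omega)|_{U'}\in\Eh^k_{\con(1)}(U')$ and $d\big((f^*\omega)|_{U'}\big)=(f^*\eta)|_{U'}$.

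To build $U'$, the first step is to apply \autoref{prop:stratification}(2), and its proof, to $X$: this produces a stratification $\ma{S}$ of $X$ into finitely many globally subanalytic $C^2$-submanifolds $\iota_S\colon S\hookrightarrow X$ on each of which $\iota_S^*\omega$ is a genuinely $C^1$ constructible $k$-form with $d(\iota_S^*\omega)=\iota_S^*\eta$. Next, take a $C^2$-cell decomposition of $Y$ compatible with the finite family of globally subanalytic sets $\{f^{-1}(S):S\in\ma{S}\}$ and such that the components of $f$ are $C^2$ on every cell; let $U'$ be the union of its $m$-dimensional cells, which are open in $Y$, so that $U'$ is globally subanalytic, open and dense. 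Each $m$-dimensional cell $C$ lies in a single $f^{-1}(S)$, hence $g:=f|_C\colon C\to S$ is a globally subanalytic $C^2$-map, and by contravariant functoriality of the pull-back of continuous forms along $C^1$-maps, $(f^*\omega)|_C=g^*(\iota_S^*\omega)$ and $(f^*\eta)|_C=g^*(\iota_S^*\eta)$. As $g$ is $C^2$ and $\iota_S^*\omega$ is $C^1$, the form $g^*(\iota_S^*\omega)$ is $C^1$, and the classical identity $d(g^*\alpha)=g^*(d\alpha)$ — which, for a $C^2$-map $g$ and a $C^1$-form $\alpha$, uses only the chain rule and the vanishing $d(dg^j)=0$ of the $C^2$-components $g^j$ — gives $d\big((f^*\omega)|_C\big)=g^*\big(d(\iota_S^*\omega)\big)=g^*(\iota_S^*\eta)=(f^*\eta)|_C$. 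Ranging over the finitely many $m$-dimensional cells, $(f^*\omega)|_{U'}$ is $C^1$ with $d\big((f^*\omega)|_{U'}\big)=(f^*\eta)|_{U'}$, so $f^*\omega\in\Omega^k_{\con(0)}(Y)$ and $D(f^*\omega)=f^*\eta=f^*(D\omega)$.

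The crux — and the reason \autoref{prop:stratification} enters — is the interplay of two low-regularity features. First, $f$ is only $C^1$, so even for a $C^\infty$-form $\omega$ the pull-back $f^*\omega$ need not be $C^1$ and cannot be differentiated naively; this is handled by passing to the $C^2$-cell decomposition of $Y$, on whose cells $f$ is genuinely $C^2$ and the textbook computation of $d\circ f^*$ is valid. Second, $\omega$ is only $C^1$ on a dense open $C^1$-zone, and the preimage under $f$ of its lower-dimensional bad locus can be all of $Y$ — for instance when $f$ is constant — so one cannot simply restrict to $f^{-1}$ of a $C^1$-zone of $\omega$. Stratifying the \emph{target} circumvents this: the $m$-dimensional cells of $Y$ are forced to land inside strata of $X$ on which $\omega$ is honestly $C^1$, and there $f^*\omega$ becomes the pull-back of a genuine $C^1$-form by a genuine $C^2$-map, so the classical computation applies. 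The only input beyond \autoref{prop:stratification} is the $C^2$-cell decomposition of $Y$ compatible with finitely many prescribed definable sets and making a prescribed definable map $C^2$ on cells — the usual $C^p$-cell decomposition with $p=2$, valid in any o-minimal structure and in particular in $\R_\an$.
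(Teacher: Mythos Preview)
Your proof is correct and follows the same overall strategy as the paper: invoke \autoref{prop:stratification} to stratify the target $X$ into $C^2$-submanifolds on which $\omega$ restricts to a genuine $C^1$-form, then work on a dense open piece of $Y$ where $f$ is $C^2$ and lands in a single stratum, so that the classical identity $d\circ g^*=g^*\circ d$ applies. The only substantive difference is how that dense open in $Y$ is produced. The paper first shrinks $Y$ so that $f$ is $C^2$ and $f^*\omega$ is $C^1$, stratifies $X$ compatibly with $f(Y)$, and then uses the preimage $f^{-1}(X')$ of the top-dimensional part $X'\subset f(Y)$, asserting this preimage is dense. You instead take a $C^2$-cell decomposition of $Y$ compatible with the preimages of \emph{all} strata and with $f$ of class $C^2$ on each cell. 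Your route is slightly more robust: it treats every stratum uniformly and makes the density of the resulting $U'\subset Y$ automatic (it is the union of the top-dimensional cells), whereas the paper's density assertion is delicate exactly in the situation you single out in your last paragraph --- when $f$ collapses an open piece of $Y$ into a lower-dimensional stratum of $f(Y)$. In that case your cell decomposition still captures such a piece as a top cell mapping into a lower stratum $S$, and the argument goes through because $\iota_S^*\omega$ is $C^1$ there as well.
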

\begin{proof}
We show again (2).
Let $\omega\in\Omega^k_{\con(0)}(X)$ and $\eta:=D\omega\in \Omega^{k+1}_{\con(0)}(X)$. Let $U$ be a globally subanalytic $C^1$-zone for $\omega$ such that $d(\omega|U)=\eta|_U$.
We have $f^*\omega\in \ma{E}^k_{\con(0)}(Y)$. 
(This is where we use that $f$ is at least $C^1$.)
We need to check that
$d((f^*\omega)|_V)=(f^*\eta)|_V$ on some $C^1$-zone $V$ for $f^*\omega$. The assertion
is local on $X$ and $Y$. We pass to charts and replace $Y$ by a globally subanalytic dense open subset on which $f$ is even $C^2$.

Without loss of generality $f^*\omega$ is $C^1$ (replace $Y$ by a $C^1$-zone for $f^*\omega$). The image $f(Y)\subset X$ is globally subanalytic. Let $\ma{S}$ be a stratification of $X$, compatible with $f(Y)$, as in Proposition~\ref{prop:stratification}. Hence there is
a dense open globally subanalytic $\iota:X'\hookrightarrow f(Y)$ which is
also a  $C^1$-submanifold of $X$ and such that $\iota^*\omega$ is $C^1$. Moreover,
$d(\iota^*\omega)=\iota^*\eta$.
The preimage $f^{-1}(X')$ is dense
in $Y$. We claim that this is the $C^1$-zone we wanted to find.
Note that 
$f^*\omega|_{f^{-1}(X')}=f^*\iota^*\omega$.
The claim now follows from the compatibility of
pull-back and differential in the $C^1$-case. 
\end{proof}

\subsection{Stokes' Theorem}

In the case $q>0$ we immediately have the usual Stokes' theorem for simplices.

  \begin{rem}\label{Stokes}
  	Let $q>0$.
   	Let $k>0$ and let $\Delta^k$ be the standard $k$-simplex.
   	Let  $f:\Delta^k\to X$ be a definable $C^1$-map.
   	Then Stokes' formula holds: 
   	\begin{itemize}
   		\item[(1)]
   	Let $\omega\in\Omega_{\defin(q)}^{k-1}(X)$. Then 
   	$$\int_{\Delta^k}f^*d\omega=\int_{\partial \Delta^k}f^*\omega.$$
   	\item[(2)]
   	In the globally subanalytic case 
   	let $\omega\in\Omega_{\con(q)}^{k-1}(X)$. Then 
   	$$\int_{\Delta^k}f^*d\omega=\int_{\partial \Delta^k}f^*\omega.$$
   \end{itemize}
   \end{rem}

   We can establish Stokes' theorem also in the case $q=0$. 
   
    \begin{prop}\label{Stokes +}
   	Let $q=0$.
   	Let $k>0$ and let $\Delta^k$ be the standard $k$-simplex.
   	Let  $f:\Delta^k\to X$ be a definable $C^1$-map.
   	Then Stokes' formula holds: 
   	\begin{itemize}
   		\item[(1)]
   		Let $\omega\in\Omega_{\defin(0)}^{k-1}(X)$. Then 
   		$$\int_{\Delta^k}f^*D\omega=\int_{\partial \Delta^k}f^*\omega.$$
   		\item[(2)]
   		In the globally subanalytic case 
   		let $\omega\in\Omega_{\con(0)}^{k-1}(X)$. Then 
   		$$\int_{\Delta^k}f^*D\omega=\int_{\partial \Delta^k}f^*\omega.$$
   	\end{itemize} 
   \end{prop}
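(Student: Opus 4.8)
The plan is to reduce the $q=0$ Stokes formula to the classical $C^1$-Stokes formula on the simplex by working on a suitable dense open subset and then passing to the limit. First I would observe that the integrals on both sides only depend on the restriction of $f^*D\omega$ and $f^*\omega$ to a dense open subset of $\Delta^k$, respectively $\partial\Delta^k$, since the boundary of the bad locus has measure zero and all forms involved are continuous (hence bounded on the compact simplex, so the integrals converge and are insensitive to null sets). Using \autoref{thm:functorial}, the pull-backs $f^*\omega$ and $f^*D\omega$ lie in $\Omega^{k-1}_{\con(0)}(\Delta^k)$ and $\Omega^k_{\con(0)}(\Delta^k)$ with $D(f^*\omega)=f^*D\omega$; so after replacing $X$ by $\Delta^k$ and $f$ by $\mathrm{id}$, it suffices to prove: for $\omega\in\Omega^{k-1}_{\con(0)}(\Delta^k)$ one has $\int_{\Delta^k}D\omega=\int_{\partial\Delta^k}\omega$.

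Next I would apply \autoref{prop:stratification} to $\Delta^k$ together with the finite family $\ma{D}$ consisting of the (closed) faces of $\partial\Delta^k$ and the interior $(\Delta^k)^\circ$. This yields a stratification into globally subanalytic $C^1$-submanifolds $S$ with $\iota_S^*\omega\in\Omega^{k-1}_{\con(0)}(S)$ and $D(\iota_S^*\omega)=\iota_S^*D\omega$. On the open stratum $U=(\Delta^k)^\circ$ the form $\omega|_U$ is genuinely $C^1$ with $d(\omega|_U)=(D\omega)|_U$, and similarly on the top-dimensional stratum of each face the pulled-back form is $C^1$ with the compatible differential. The idea is then to exhaust $\Delta^k$ by slightly shrunk simplices $\Delta^k_\varepsilon$ (scale towards the barycenter by factor $1-\varepsilon$), apply the classical $C^1$-Stokes theorem on each $\Delta^k_\varepsilon$ — whose interior and boundary lie in the $C^1$-zones — and let $\varepsilon\to 0$. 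By continuity of $D\omega$ on the compact $\Delta^k$ and dominated convergence, $\int_{\Delta^k_\varepsilon}D\omega\to\int_{\Delta^k}D\omega$; and one must check $\int_{\partial\Delta^k_\varepsilon}\omega\to\int_{\partial\Delta^k}\omega$, which follows by parametrizing each face and using uniform continuity of the coefficient functions of $\omega$ on the compact simplex.

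An alternative, slightly cleaner route avoiding the boundary-limit bookkeeping: apply \autoref{prop:analysis} directly. The simplex $\Delta^k$ is a $C^2$-manifold (indeed $C^\omega$) with corners; decompose $\partial\Delta^k$ into its open facets $F_i^\circ$. On $M=\Delta^k$ (locally modeled near a facet as $[0,1)\times(-1,1)^{k-1}$, with the corners a measure-zero set one handles by a further limiting argument or by the stratification of \autoref{prop:stratification}) the form $\omega$ is continuous, $\omega|_{(\Delta^k)^\circ}$ is $C^1$, $\iota_{F_i}^*\omega$ is $C^1$ (after passing to the dense $C^1$-zone in each facet given by \autoref{prop:stratification}), and $d(\omega|_{(\Delta^k)^\circ})=(D\omega)|_{(\Delta^k)^\circ}$. \autoref{prop:analysis} then gives $d(\iota_{F_i}^*\omega)=\iota_{F_i}^*(D\omega)$, i.e. the Stokes identity holds on each facet with genuinely $C^1$ data. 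Now the classical Stokes theorem for $C^1$-forms on the (smooth, compact) simplex, applied facet by facet and summed with the correct orientations, yields $\int_{\Delta^k}D\omega=\sum_i\int_{F_i}\iota_{F_i}^*\omega=\int_{\partial\Delta^k}\omega$.

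The main obstacle is the low regularity: the classical Stokes theorem is usually quoted for $C^1$ (or $C^\infty$) forms on all of $\Delta^k$, whereas here $\omega$ is only continuous on $\Delta^k$ and merely $C^1$ on a dense open globally subanalytic subset, with the exceptional set touching the boundary. Controlling the behavior of $\omega$ and $D\omega$ near this exceptional set — so that the boundary integrals genuinely converge and the shrinking-simplex (or facet-by-facet \autoref{prop:analysis}) limit passes — is the delicate point; here one leans essentially on tameness, namely that the exceptional set is globally subanalytic of lower dimension, that $\partial\Delta^k$ meets it in a lower-dimensional globally subanalytic set, and on the boundedness of continuous forms on the compact $\Delta^k$, together with \autoref{prop:stratification} and \autoref{prop:analysis} which are precisely designed to handle exactly this regularity mismatch.
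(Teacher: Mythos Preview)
Your overall strategy — reduce via functoriality to $X=$ a neighbourhood of $\Delta^k$, exhaust by shrunk simplices $\Delta^k_\varepsilon$, apply the classical $C^1$-Stokes formula on each $\Delta^k_\varepsilon$, and let $\varepsilon\to 0$ — is exactly the paper's approach. However, there is a genuine gap in your execution.

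You write that after applying \autoref{prop:stratification}, ``on the open stratum $U=(\Delta^k)^\circ$ the form $\omega|_U$ is genuinely $C^1$'', and that the boundary $\partial\Delta^k_\varepsilon$ of the shrunk simplex ``lies in the $C^1$-zones''. Neither is justified. The $C^1$-zone for $\omega$ is only a \emph{dense} open globally subanalytic subset of $(\Delta^k)^\circ$; its complement may be a positive-codimension globally subanalytic set sitting in the interior of $\Delta^k$ (for a concrete picture, think of a form whose coefficients fail to be $C^1$ along a hyperplane slicing through the simplex). The stratification from \autoref{prop:stratification} will then subdivide $(\Delta^k)^\circ$ into several top-dimensional strata separated by lower-dimensional ones; there is no single open stratum equal to $(\Delta^k)^\circ$. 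Consequently, for generic $\varepsilon$ the boundary $\partial\Delta^k_\varepsilon$ will meet the bad locus, and the classical $C^1$-Stokes theorem does not apply on $\Delta^k_\varepsilon$. Your alternative via \autoref{prop:analysis} has the same defect, since that proposition also requires $\omega|_{M^\circ}$ to be $C^1$ on the \emph{entire} interior.

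The missing step, which the paper supplies, is a further subdivision: one invokes a globally subanalytic $C^1$-triangulation of $\Delta^k$ (the paper cites \cite{omin-triang}) compatible with the bad locus, so that on each small simplex of the triangulation $\omega$ is $C^1$ on the interior and on the interior of every face. Only then does the shrinking argument go through on each small simplex; summing over the triangulation, the contributions from internal faces cancel in pairs and one recovers Stokes for $\Delta^k$. Your invocation of \autoref{prop:stratification} gestures in this direction but does not give you simplices, which is what is needed to run the $\varepsilon$-shrinking and the cancellation of internal faces.
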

   \begin{proof}
   	We show (2), the proof being identical for (1).
   	We fix $\omega$. Let $\eta\in\Omega^k_{\con(0)}(X)$ be such that
   	$\eta|_U=d(\omega|_U)$ on a globally subanalytic $C^1$-zone $U$ for $\omega$.
   	By functoriality (Theorem \ref{thm:functorial}) it suffices to consider the case where $f$ is the inclusion of $\Delta^k$ in an open neighbourhood of $\Delta^k$ in $\R^k$ . By \cite[Main Theorem]{omin-triang} there is a globally subanalytic triangulation of $\Delta^k$ by $C^1$-simplices such that $\omega$ is $C^1$ on the interior of each face. It suffices to establish the formula for these. Without loss of generality, $\omega$ is now $C^1$ on the interior of $\Delta^k$ and its faces.

   	Let $\Delta_\epsilon\subset\Delta^k$ be the $k$-simplex at distance $\epsilon$ from the boundary. It is fully contained in the $C^1$-zone for $\omega$. Hence Stokes' formula holds on $\Delta_\epsilon$. We take the limit $\epsilon\to 0$.
   	As $\omega$ and $\eta$ are continuous, we obtain the terms for $\Delta^k$ itself.
   \end{proof}

   \section{Homotopy invariance}
   
   By Example \ref{not computing} homotopy invariance does in general not hold for the definable de Rham cohomology groups. 
   But we are able to establish it in the globally subanalytic setting for the constructible de Rham cohomology groups.
   
   Let $X,Y$ be globally subanalytic $C^p$-manifolds where $p\in \IN\cup\{\omega\}$ and let $f,g:X\to Y$ be globally subanalytic $C^p$-mappings. Let $0\leq q\leq p-1$.
   
   We show that the constructible de Rham cohomology is invariant under globally subanalytic $C^p$-homotopies.  We establish in the first step the case $q>0$. The classical proof  (see for example \cite[Chapter I \S 4]{bott_tu}) has to be adjusted to our setting.
   In the second step we will show also the case $q=0$.
   
   We consider $X\times \R$. Let $\pi:X\times\R\to X, (x,t)\mapsto x,$ be the natural projection and
   $s:X\to X\times \R, x\mapsto (x,0),$ the $0$-section.
The natural decomposition
   \[ 
\Lambda^kT^*(X\times\R)=\Lambda^kT^*X\oplus \Lambda^{k-1}T^*X\times T^*\R
\]
   induces a canonical decomposition
  \begin{equation}\label{can} 
    \omega=\omega'+\omega''\wedge dt
\end{equation}
   for all $\omega\in\Omega^k_{\con(q)}(X\times\R)$. In local coordinates,
   $\omega''\wedge dt$ collects the components involving $dt$ and $\omega'$ the forms
   that do not involve $dt$. Note that both $\omega'$ and $\omega''$ are constructible and $C^q$. However, they are not necessarily in $\Omega^k_{\con(q)}(X\times\R)$ because the differential does not respect the decomposition. Indeed,
   when we decompose
   \[ d=d_x+d_t\]
   with $d_x$ induced from the differential on $X$ and $d_t$ from the differential on $\R$, then
   \[ d\omega=d_x\omega'+d_t\omega'+d_x\omega''\wedge dt\]
   and $(d\omega)'=d_x\omega'$. If $d\omega$ is $C^q$, then so is $d_x\omega'$ but not necessarily $d_t\omega'$ and $d_x\omega''$.
   
   \begin{defn}
   	\label{DR 4.12}
   	Let $q\geq 0$.
   	Let $k\in \IN$. We set
   	\begin{align*}
   		Q_k:\Omega^k_{\con(q)}(X\times \R)&\to \Omega^{k-1}_{\con(q)}(X\times \R),\\
   		\omega&\mapsto \int_0^t\omega''dt.
   	\end{align*}
   \end{defn}

   \begin{prop}\label{homotopy_formula}
   	Let $q>0$.
   	The form $Q_k(\omega)$ is well-defined in $\Omega^{k-1}_{\con(q)}(X\times \IR)$ and satisfies
   	\[ DQ_k(\omega)-Q_{k+1}(D\omega)=(-1)^k(\omega-\pi^*s^*\omega).\]
   \end{prop}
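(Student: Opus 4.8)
The plan is to reduce the statement to the classical homotopy formula for differential forms by a careful bookkeeping of regularity and constructibility, and then handle the one genuinely new point, namely that the fiberwise integral $Q_k$ stays inside $\Omega^{\bullet}_{\con(q)}$. First I would verify well-definedness: given $\omega\in\Omega^k_{\con(q)}(X\times\R)$, write $\omega=\omega'+\omega''\wedge dt$ as in \eqref{can}. Both $\omega'$ and $\omega''$ are constructible and $C^q$. The key point is that the class of constructible functions is closed under parametric integration (Cluckers--Miller, cited in Section~\ref{1.1}), so each coefficient of $\int_0^t\omega''\,dt$ is again constructible; and since $\omega''$ is $C^q$ jointly in $(x,t)$, differentiating under the integral sign shows $\int_0^t\omega''\,dt$ is $C^q$, with $d_t$ of it equal to $\omega''$ (up to sign) and $d_x$ of it equal to $\int_0^t (d_x\omega'')\,dt$. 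One must then check that $D$ of this form is still $C^q$; this follows because $d\omega$ being $C^q$ forces $d_x\omega''\wedge dt$ (the $dt$-part of $d\omega$) to have $C^q$ coefficients, hence $\int_0^t d_x\omega''\,dt$ is $C^q$, and the $dx$-only part $d_x\int_0^t\omega''\,dt$ combines with the $\omega''$ term to give something $C^q$. So $Q_k(\omega)\in\Omega^{k-1}_{\con(q)}(X\times\R)$.

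Next I would carry out the identity $DQ_k(\omega)-Q_{k+1}(D\omega)=(-1)^k(\omega-\pi^*s^*\omega)$ by a direct local computation in coordinates $(x_1,\dots,x_n,t)$, exactly as in the classical argument in \cite[Chapter~I \S4]{bott_tu}. Splitting $\omega=\omega'+\omega''\wedge dt$ and using $d=d_x+d_t$, one computes $DQ_k(\omega)=d_x\int_0^t\omega''\,dt+(-1)^{k-1}\omega''\wedge dt$ (the second term coming from $d_t$ hitting the upper limit), and $Q_{k+1}(D\omega)=\int_0^t(d\omega)''\,dt$ where $(d\omega)''\wedge dt = (d_x\omega' + d_t\omega''\wedge dt)$-part involving $dt$; care is needed to separate $d_t\omega'$ (which contributes $\partial\omega'/\partial t\wedge dt$) from $d_x\omega''\wedge dt$. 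Combining, the $d_x\int_0^t(d_x\omega'')\,dt$ terms cancel against $d_x\int_0^t\omega''\,dt$ after integrating $\partial/\partial t$, by the fundamental theorem of calculus, and one is left with $\omega' - (\text{value at }t=0)$, i.e. $\omega-\pi^*s^*\omega$, up to the sign $(-1)^k$. This is purely formal once the regularity in the previous paragraph is in place; the only subtlety is correctly tracking the signs and which partial derivatives land where, and noting that $\pi^*s^*\omega$ is exactly $\omega'$ evaluated at $t=0$ (the $\omega''\wedge dt$ part dies under $s^*$ since $s^*dt=0$).

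The main obstacle I anticipate is not the formal Cartan-calculus identity but the regularity/definability bookkeeping: one must be sure that \emph{all} the intermediate objects ($\omega''$, $d_x\omega''$, $\int_0^t\omega''\,dt$, $\int_0^t d_x\omega''\,dt$) are simultaneously constructible and of class $C^q$, and that taking $D$ of $Q_k(\omega)$ does not drop regularity below $q$. The constructibility is handled by closure under parametric integration and under partial derivatives (both recalled in Section~\ref{1.1}); the $C^q$-regularity of parametric integrals of $C^q$ families, and the validity of differentiation under the integral sign, are standard real-analysis facts, but they must be invoked for each coefficient in local coordinates. A secondary point is that the decomposition \eqref{can} and the operators $d_x,d_t$ are defined on $X\times\R$ using a product chart structure, so one should note that $Q_k$ is well-defined independently of the chart — this follows because the $dt$-direction is canonically split off, so $\omega''$ transforms as a form on $X$ (with parameter $t$) under changes of the $X$-coordinates. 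Once these checks are dispatched, the proof is the classical one verbatim.
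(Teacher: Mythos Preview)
Your plan for the classical identity (local computation à la Bott--Tu, splitting $\omega=\omega'+\omega''\wedge dt$ and using the fundamental theorem of calculus) is correct and is exactly what the paper does. The constructibility of $Q_k(\omega)$ via Cluckers--Miller is also right.

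However, there is a genuine gap in your regularity argument, precisely at the point you flagged as ``the main obstacle''. You assert that ``$d\omega$ being $C^q$ forces $d_x\omega''\wedge dt$ (the $dt$-part of $d\omega$) to have $C^q$ coefficients''. This is not correct: the $dt$-part of $d\omega$ is $d_t\omega' + d_x\omega''\wedge dt$, not $d_x\omega''\wedge dt$ alone. The paper explicitly warns about this just before the proposition: ``If $d\omega$ is $C^q$, then so is $d_x\omega'$ but \emph{not necessarily} $d_t\omega'$ and $d_x\omega''$.'' So from $d\omega\in\mathcal{E}^{k+1}_{\con(q)}$ you cannot conclude that $d_x\omega''$ is $C^q$, and hence your direct verification that $DQ_k(\omega)=\int_0^t d_x\omega''\,ds + (-1)^{k-1}\omega''\wedge dt$ is $C^q$ breaks down; a priori the first summand is only $C^{q-1}$ in the $x$-variables.

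The paper's fix is elegant and worth internalizing: rather than checking $C^q$-regularity of $DQ_k(\omega)$ directly, one first proves the identity (as an equality of $C^{q-1}$ forms, which is legitimate since $Q_k(\omega)$ is $C^q$ with $q\ge 1$), and then \emph{reads the regularity off the formula itself} by descending induction on $k$. Namely, rearranging gives
\[
DQ_k(\omega)=Q_{k+1}(D\omega)+(-1)^k(\omega-\pi^*s^*\omega),
\]
and the right-hand side is manifestly $C^q$: the last two terms are in $\Omega^k_{\con(q)}$, and $Q_{k+1}(D\omega)\in\Omega^k_{\con(q)}$ by the inductive hypothesis applied to $D\omega\in\Omega^{k+1}_{\con(q)}$ (the induction starts trivially in top degree). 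This simultaneously establishes $Q_k(\omega)\in\Omega^{k-1}_{\con(q)}$ and the formula. Your argument can be repaired in exactly this way, but as written it does not go through.
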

   \begin{proof}By Cluckers and Dan Miller \cite[Theorem 1.3]{cluckers_miller} parametric integrals of integrable constructible functions are constructible.  This makes $Q_k(\omega)$ constructible. Once we have established the formula,
   	we can argue by descending induction on $k$ that $DQ_k(\omega)$ is $C^q$, so that
   	$Q_k(\omega)$ is indeed in $\Omega^{k-1}_{\con(q)}(X\times\R)$.
   	
   	The formula itself is classical, see \cite[Chapter~I, \S~4]{bott_tu}. We sketch the argument for the convenience of the reader.
   	We show the formula for $\omega'$ and $\omega''\wedge dt$ separately.
   	The claims are
   	\begin{gather}
   		0-Q_{k+1}(d_t\omega')=(-1)^k(\omega'-\pi^*s^*\omega'),\\
   		d_tQ_k(\omega''\wedge dt)=(-1)^k\omega''\wedge dt,\\
   		d_xQ_k(\omega'')-Q_{k+1}(d_x\omega''\wedge dt)=0.
   	\end{gather}
   	The first two hold by the fundamental theorem of calculus applied to the components of $\omega'$ and $\omega''$. The last holds by the formula for the derivative of a parameter dependent integral.
   \end{proof}

\subsection{The case $q=0$}
The aim of this section is to extend the formula for the chain homotopy also to the case $q=0$.

    \begin{prop}\label{homotopy_formula +}
   	Let $q=0$.
   	The form $Q_k(\omega)$ is well-defined in $\Omega^{k-1}_{\con(0)}(X\times \IR)$ and satisfies
   	\[ DQ_k(\omega)-Q_{k+1}(D\omega)=(-1)^k(\omega-\pi^*s^*\omega).\]
   \end{prop}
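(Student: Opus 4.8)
The strategy is to reduce the $q=0$ case to the already-established $q>0$ case (Proposition~\ref{homotopy_formula}) by working on a suitable $C^1$-zone, and then to use a density/continuity argument to extend the resulting identity of continuous forms to all of $X\times\IR$. First I would record that $Q_k(\omega)$ is constructible and $C^0$: by Cluckers--Miller the parametric integral of a constructible function is constructible, and the integral of a continuous function in the parameter direction is continuous, so $Q_k(\omega)\in\Eh^{k-1}_{\con(0)}(X\times\IR)$. The nontrivial part is then to show $DQ_k(\omega)$ exists as a constructible $C^0$-form and that the chain homotopy identity holds.

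The key step is to pick a good $C^1$-zone. Let $\eta:=D\omega\in\Omega^{k+1}_{\con(0)}(X\times\IR)$ and let $U$ be a globally subanalytic $C^1$-zone for $\omega$ with $d(\omega|_U)=\eta|_U$. By o-minimal cell decomposition (applied in charts, as in the proof of the Definition and Remark on $C^1$-zones and in Proposition~\ref{prop:stratification}) I would arrange a globally subanalytic dense open $U'\subseteq X\times\IR$ which is a union of cells, each of which is an interval bundle over a cell in $X$, so that $U'$ is a $C^1$-zone for $\omega$, for $\omega'$ and $\omega''$ separately (these are constructible and $C^0$, hence admit such zones), and moreover the fibres $U'_x=\{t:(x,t)\in U'\}$ are either empty or cofinite-free intervals of a controlled shape, with $x$ ranging over a dense open $V\subseteq X$. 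On such a $U'$ one can differentiate under the integral sign and invoke the fundamental theorem of calculus exactly as in the proof of Proposition~\ref{homotopy_formula}; this yields, on $U'$,
\[ d\bigl(Q_k(\omega)|_{U'}\bigr)-Q_{k+1}(D\omega)|_{U'}=(-1)^k\bigl(\omega-\pi^*s^*\omega\bigr)|_{U'}. \]
Here I use that $\pi^*s^*\omega$ is already continuous (and constructible) on all of $X\times\IR$, so the right-hand side is a continuous constructible form; since $Q_{k+1}(D\omega)$ is likewise continuous constructible, this identity exhibits $d(Q_k(\omega)|_{U'})$ as the restriction of the continuous constructible form $\zeta:=(-1)^k(\omega-\pi^*s^*\omega)+Q_{k+1}(D\omega)$ to $U'$. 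Hence $U'$ is a $C^1$-zone for $Q_k(\omega)$ witnessing $Q_k(\omega)\in\Omega^{k-1}_{\con(0)}(X\times\IR)$, and by definition $DQ_k(\omega)=\zeta$, which is precisely the claimed formula.

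The main obstacle is the subtlety in the first reduction: the naive integral $\int_0^t\omega''\,dt$ need not be $C^1$ even where $\omega''$ is $C^1$, because along the fibre the integrand can fail $C^1$ on a lower-dimensional set, and more importantly the $x$-derivative $d_x Q_k(\omega'')$ requires differentiating an integral whose integrand $\omega''(x,\cdot)$ is only $C^0$ in $x$ off a bad set that may vary with the fibre. The device to handle this is the good directions lemma together with the one-variable analysis lemma already used in the proof that $\Omega^0_{\con(0)}=\Eh^0_{\con(1)}$: after a linear change of coordinates one arranges that for each bad point the unit directions point into the $C^1$-zone, so differentiability at the bad locus (with the expected derivative) follows from the classical fact that a continuous function on an interval, $C^1$ off a point with derivative extending continuously, is $C^1$ there. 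Applying this fibrewise and then in the $x$-directions, combined with the already-proven identity on the dense open $U'$ and the continuity of all forms involved, forces the identity everywhere. I would also remark that, just as in Proposition~\ref{homotopy_formula}, once the formula is known one can run the descending induction on $k$ to confirm $DQ_k(\omega)$ is $C^0$, closing the argument.
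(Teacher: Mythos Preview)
Your plan has a genuine gap at the step ``on such a $U'$ one can differentiate under the integral sign and invoke the fundamental theorem of calculus exactly as in the proof of Proposition~\ref{homotopy_formula}''. The problem is that $Q_k(\omega)(x,t)=\int_0^t\omega''(x,s)\,ds$ integrates over the whole segment $\{x\}\times[0,t]$, and this segment will in general meet the complement of any $C^1$-zone for $\omega$. Even if you arrange (as in the paper's Lemma~\ref{lem:pair}) that the bad locus is a finite union of graphs $t=c_i(x)$ over a dense open in $X$, a generic point $(x,t)$ still has its integration segment crossing some of the $c_i$. So there is no dense open $U'\subset X\times\R$ on which the $q>0$ computation applies verbatim; your formula on $U'$ is asserted, not proved. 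The remedy you propose---good directions plus the one-variable lemma---is the wrong tool: that device upgrades ``$C^1$ on a dense open with continuously extending derivative'' to ``$C^1$ everywhere'', but here you have not established that $Q_k(\omega)$ is $C^1$ with the correct derivative on \emph{any} dense open. The difficulty is not at the bad points of $Q_k(\omega)$; it is at the perfectly good points $(x,t)$ whose integration path hits bad points of $\omega$.

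The paper's argument confronts exactly this. After the reduction of Lemma~\ref{lem:reduction} one has globally subanalytic $C^2$-functions $c_1<\dots<c_n$ on $X$ whose graphs carry the bad set, with $s_{c_i}^*\omega$ arranged to be $C^1$. One then decomposes
\[
Q_k(\omega)=\pi^*Q_0^{c_{j+1}}(\omega)+\cdots+\pi^*Q_{c_{i-1}}^{c_i}(\omega)+\pi^*Q_{c_i}^{T}(\omega)+Q_T^t(\omega)
\]
and treats each strip integral $Q_a^b(\omega)=\int_a^b\omega''\,dt$ separately (Lemma~\ref{lemma:section}). On the interior of a strip $\omega$ is $C^1$ and the $q>0$ computation applies; the essential new ingredient is Proposition~\ref{prop:analysis}, which lets one pass to the boundary graph $t=b(x)$: it says that if $\omega$ is $C^0$ on a $C^2$-manifold with boundary, $C^1$ in the interior with $d\omega$ extending continuously, and the boundary pullback is $C^1$, then the expected identity $d(\iota^*\omega)=\iota^*\eta$ holds on the boundary. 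This is precisely what bridges the bad graphs and yields $dQ_a^b(\omega)-Q_a^b(d\omega)=(-1)^k(s_a^*\omega-s_b^*\omega)$ on a dense open in $X$. Summing the strips gives the chain homotopy identity on a genuine $C^1$-zone for $Q_k(\omega)$. Your sketch does not contain this decomposition or any substitute for the role of Proposition~\ref{prop:analysis}.
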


The proof will be given at the end of the section by combining some geometric preparations with a limit argument.

Throughout, $X$ is a globally subanalytic $C^1$-manifold. We fix $\omega\in\Omega^k_{\con(0)}(X\times\R)$.

\begin{lemma}\label{lem:pair}
Assume that $X$ is affine. Let $f:X\times\R\to\R$ be a constructible function. Then there is a dense open globally analytic subset $U\subset X$, a partition
$\ma{C}$ of $U$ into finitely many open globally subanalytic sets such that for each $C\in \ma{C}$ there are a natural number $n=n_C$ and globally subanalytic $C^2$-functions
	$c_1,\ldots,c_n:C\to \IR$ with $c_1<\ldots<c_n$ such that 
\[ \bigcup_{C\in\ma{C}}\left\{(x,t)\in C\times\R\mid t\neq c_i(x)\mbox{ for all } i\right\}\]
is a $C^1$-zone for $f$. 
\end{lemma}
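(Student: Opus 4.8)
The plan is to reduce everything to the one-variable structure of constructible functions on $X \times \R$, then apply cell decomposition in the $t$-direction with globally subanalytic parameters. Since $X$ is affine, we may embed it in some $\R^N$ and treat $f$ as a constructible function on (an open piece of) $\R^N \times \R$. The key input is that a constructible function of several variables, after restriction to a suitable dense open globally subanalytic subset, becomes real analytic (by \cite{kaiser_rocky}, or at least $C^1$ on a $C^1$-zone as already used throughout); more precisely we want a $C^1$-zone whose complement, in each fiber $\{x\}\times\R$, is a finite set of points depending globally subanalytically on $x$.

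First I would apply cell decomposition compatible with the (globally subanalytic) set of points where $f$ fails to be $C^1$: there is a dense open globally subanalytic $U_0 \subset X$ over which the bad locus $B = (X\times\R) \setminus (C^1\text{-zone of }f)$, intersected with $U_0 \times \R$, has all fibers of dimension $0$. Shrinking $U_0$ further by cell decomposition, I would stratify $U_0$ into finitely many open cells $C$ such that over each $C$ the set $B \cap (C\times\R)$ is a finite union of graphs of globally subanalytic continuous functions $c_1 < \dots < c_n$ on $C$. Then I would refine once more, using the fact (from \cref{1.1}, adapting \cite{kaiser_rocky}) that globally subanalytic functions are piecewise $C^2$, to arrange that each $c_i$ is $C^2$ on $C$ — this requires only shrinking $C$ to a dense open globally subanalytic subset and re-partitioning, which is permissible. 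The resulting cells $C$ and functions $c_i = c_i^C$ have exactly the asserted property: off the graphs of the $c_i$, the function $f$ lies in its $C^1$-zone restricted to $C \times \R$.

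The main obstacle is bookkeeping rather than depth: one must be careful that ``dense open'' is preserved at each refinement step (a finite intersection of dense open globally subanalytic sets is dense open), that the $c_i$ can be taken continuous \emph{and} $C^2$ and \emph{globally} defined on all of $C$ (not just on a sub-cell), and that the graphs stay disjoint with a fixed ordering $c_1 < \dots < c_n$ throughout each $C$. The disjointness and ordering are automatic once $C$ is connected, or can be forced by further partitioning $U$; the simultaneous $C^2$-regularity of all the $c_i$ follows by intersecting the finitely many dense open loci on which each individual $c_i$ is $C^2$. I expect the statement of $C^2$-smoothness of the $c_i$ (rather than merely $C^1$) to be the only place where one invokes the piecewise-$C^2$ regularity of globally subanalytic functions; everything else is a direct application of o-minimal cell decomposition with parameters in $X$.
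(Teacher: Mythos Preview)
Your proposal is correct and follows essentially the same route as the paper: take the globally subanalytic complement $B$ of a $C^1$-zone for $f$, remove the small set of $x$ where the fiber $B_x$ is infinite (a dimension count), invoke uniform finiteness, and then apply $C^2$-cell decomposition to obtain the open cells $C$ and the ordered graph functions $c_1<\dots<c_n$. The only cosmetic difference is that the paper gets the $C^2$-regularity of the $c_i$ in one stroke from $C^2$-cell decomposition rather than by a separate refinement step, and you need not invoke \cite{kaiser_rocky} for this---piecewise $C^2$-ness of globally subanalytic functions is already a consequence of o-minimal $C^2$-cell decomposition.
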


\begin{proof}We write $\ma{U}=X\times\R$. Let $m=\dim X$.
	Let $\ma{V}$ be a globally subanalytic $C^1$-zone for $f$ and set
	$\ma{B}:=\ma{U}\setminus \ma{V}$. Then $\dim(\ma{B})<m+1$.
	Set $V:=\pi(\ma{V})$ and for $x\in V$ let
	$$B_x:=\{t\in \IR\mid (x,t)\in \ma{B}\}.$$
	Let $E$ be the set of all $x\in V$ such that $B_x$ is not finite, equivalently that $B_x$ contains a nonempty open interval.
	Then $\dim(E)<m$ since otherwise the interior of $B$ would be nonempty.
	By the uniform finiteness property \cite[Lemma (2.13) in Chapter 3]{van-den-Dries-tame-topology} there is $N\in \IN$ such that $\# B_x\leq N$ for all $x\in V\setminus E$. There is a cell decomposition $\ma{D}$ of $V\setminus E$ such that for every $C\in \ma{D}$ there are $n=n_C\in \{0,\ldots,N\}$ and  globally subanalytic $C^2$-functions  $c_1,\ldots,c_n:C\to \IR$ with $c_1<c_2\ldots<c_n$ such that 
	$$B_x=\{c_j(x)\mid j\in \{1,\ldots n\}\}$$
	for every $x\in C$.
	Set $\ma{C}:=\{C\in \ma{D}\mid C\mbox{ open}\}$ and set $U:=\bigcup_{C\in \ma{C}}C$. 
\end{proof}

A function $a:X\to \R$ gives rise to a section $X\to X\times \R$ of $\pi:X\times \R\to \R$. We denote this section by $s_a$. In particular $s=s_0$ is the $0$-section.

\begin{lemma} \label{lem:reduction}
In order to prove Proposition~\ref{homotopy_formula +}, it suffices to verify the following claim:

Assume $X$ is an affine $C^2$-manifold, $c_1,\dots,c_n:X\to \R$ bounded globally analytic $C^2$-functions  (either non-vanishing or constant and equal to $0$) with $c_1<\dots<c_n$ such that
$s_{c_i}^*\omega$ is $C^1$ for all $i$ and such that the complement $\ma{V}$ of the graphs is a $C^1$-zone for $\omega$. Then there is a dense globally subanalytic $U\subset X$ such that $\ma{W}:=\ma{V}\cap U\times\R$ is a $C^1$-zone for
$Q_k(\omega)$ and for all $(x,t)\in\ma{W}$
\[ d Q_k(\omega)=Q_{k+1}(d\omega)+(-1)^k(\omega-\pi^*s^*\omega).\]
\end{lemma}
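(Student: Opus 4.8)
The plan is to prove Proposition~\ref{homotopy_formula +} from the claim in Lemma~\ref{lem:reduction} by a sequence of reductions, and then prove the claim itself by a limiting argument that reduces to the already-established case $q>0$ (Proposition~\ref{homotopy_formula}).

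\textbf{Reduction to the claim.} First I would note that the assertions are local on $X$, since both sides of the homotopy formula are constructible $C^0$-forms and equality may be checked on the members of a globally subanalytic open cover; passing to a chart, we may assume $X$ is an affine globally subanalytic $C^1$-manifold. Next, using that $\Omega^0_{\con(0)}$-coefficients are automatically $C^1$ after restriction to a $C^1$-zone and using van den Dries--Miller / Tamm to improve regularity on a dense globally subanalytic open set, I would pass to a dense open globally subanalytic $U\subset X$ on which $X$ is $C^2$; it suffices to prove the formula on $U\times\R$, because a constructible continuous form vanishing on a dense open subset vanishes everywhere, and $Q_k$ commutes with restriction to $U\times \R$. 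Now apply Lemma~\ref{lem:pair} to the (finitely many) coefficient functions of $\omega$ in the chart: after shrinking $U$ and partitioning it into finitely many open globally subanalytic cells $C$, on each $C\times\R$ the bad locus of $\omega$ is contained in finitely many graphs of globally subanalytic $C^2$-functions $c_1<\dots<c_n$ on $C$. Since each $\Omega$-statement can be checked on each $C$ separately (the cells are open and their union is dense), we may assume $X=C$. A further affine change of the $t$-coordinate of the form $t\mapsto t-c_1(x)$ is a globally subanalytic $C^2$-diffeomorphism of $X\times\R$ commuting with $\pi$; since $Q_k$, $s$, $\pi$ and $d$ are all natural under such maps (here one invokes \autoref{thm:functorial} in the $q=0$ case), we may translate so that $c_1\equiv 0$, i.e. the $0$-section is among the graphs, and we may boundedly truncate/rescale the remaining $c_i$ to arrange boundedness and the non-vanishing-or-zero condition. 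Finally, after restricting to a dense globally subanalytic open set we can assume each $s_{c_i}^*\omega$ is $C^1$ (its coefficients are constructible, hence $C^1$ off a lower-dimensional set). This puts us exactly in the hypotheses of the claim, and the claim's conclusion on a dense $U$ gives the formula on a $C^1$-zone for $Q_k(\omega)$, hence everywhere by continuity; this also shows $DQ_k(\omega)$ is continuous and constructible, so $Q_k(\omega)\in\Omega^{k-1}_{\con(0)}(X\times\R)$, establishing well-definedness.

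\textbf{Proof of the claim via a limit argument.} On the open set $\ma{V}$ the form $\omega$ is $C^1$ and $d\omega=\eta$ there for the constructible continuous form $\eta:=D\omega$, so on each open slab between consecutive graphs $\{c_i<t<c_{i+1}\}$ (and on $\{t<c_1\}$, $\{t>c_n\}$) the classical chain-homotopy identity of Proposition~\ref{homotopy_formula}, applied to the $C^1$-form $\omega$ restricted there, is valid: $d_xQ_k(\omega'')-Q_{k+1}(d_x\omega''\wedge dt)=0$ and the two one-variable identities hold by the fundamental theorem of calculus, but only for the integral taken over a subinterval avoiding the $c_i$. The subtlety is that $Q_k(\omega)=\int_0^t\omega''\,dt$ integrates across the graphs $t=c_i$. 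I would therefore write, for $t$ in a fixed slab, $Q_k(\omega)(x,t)=\int_0^{c_{i_1}(x)}\omega''+\dots+\int_{c_{i_j}(x)}^{t}\omega''$, and differentiate in $x$ using Leibniz: the interior of each piece contributes the ``good'' terms, while the boundary terms produce $\sum \pm (dc_i)\wedge (\text{value of }\omega'' \text{ on the graph } t=c_i)$. The key point is that the value of $\omega''$ on the graph of $c_i$ equals the corresponding coefficient of $s_{c_i}^*\omega$ up to the $dc_i$-correction built into the pullback along $s_{c_i}$, so these boundary contributions telescope/cancel precisely because $s_{c_i}^*\omega$ is $C^1$ and the one-sided limits of $\omega''$ from the two adjacent slabs agree (continuity of $\omega$ across the graph). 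More efficiently, I expect the cleanest route is to avoid slicing and instead run the same Gauss/averaging trick as in Proposition~\ref{prop:analysis}: fix $(x_0,t_0)\in\ma{W}$, take $\Delta_\epsilon$ a small box at distance $\epsilon$ from the union of graphs, where $\omega$ is genuinely $C^1$; apply the $q>0$ identity there; then let $\epsilon\to0$ using that $Q_k(\omega)$, $Q_{k+1}(d\omega)$, $\omega$ and $\pi^*s^*\omega$ are all continuous (the first two because parametric integrals of continuous constructible functions are continuous), so the identity passes to the limit.

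\textbf{Main obstacle.} The genuinely delicate step is showing $DQ_k(\omega)=d Q_k(\omega)$ is continuous across the graphs $t=c_i(x)$ and that the would-be ``jump'' terms in $d_x$ of the integral cancel — in the $C^1$-case this is automatic, but here $\omega$ is only $C^0$ globally and $Q_k(\omega)$ a priori only $C^0$, so one must genuinely use (i) the one-sided $C^1$-regularity of $\omega$ on each slab, (ii) continuity of $\omega$ itself across the graphs, and (iii) the $C^1$-ness of $s_{c_i}^*\omega$, which is exactly what controls the cross-graph derivatives of $x\mapsto c_i(x)$-indexed boundary terms, together with the divergence lemma Proposition~\ref{prop:analysis} to pass the $x$-derivative identity from the open slabs to the separating hypersurfaces. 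Once continuity of $dQ_k(\omega)$ is in hand, identifying it with $Q_{k+1}(d\omega)+(-1)^k(\omega-\pi^*s^*\omega)$ follows on the dense set $\ma{V}$ from the classical formula and then everywhere by density, and the constructibility of all terms is immediate from closure of constructible functions under parametric integration and differentiation.
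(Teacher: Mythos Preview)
Two structural problems and one genuine error.

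\textbf{Scope.} The lemma is a pure reduction: its proof must show that \emph{if} the claim holds for every $X$ satisfying the listed hypotheses, \emph{then} Proposition~\ref{homotopy_formula +} follows. Your ``Proof of the claim via a limit argument'' and ``Main obstacle'' sections do not belong here; in the paper that material is handled separately (the auxiliary operator $Q_a^b$ and Proposition~\ref{prop:analysis} are used in Lemma~\ref{lemma:section}, and the slab assembly is the actual proof of Proposition~\ref{homotopy_formula +}). For the present lemma you should only do the reduction.

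\textbf{Missing descending induction.} In your reduction you assert that the formula on a dense set ``also shows $DQ_k(\omega)$ is continuous and constructible''. But the formula from the claim reads $dQ_k(\omega)=Q_{k+1}(d\omega)+(-1)^k(\omega-\pi^*s^*\omega)$ on a $C^1$-zone; to conclude that $Q_k(\omega)\in\Omega^{k-1}_{\con(0)}(X\times\R)$ you need the right-hand side to extend to a continuous constructible form on all of $X\times\R$, and for that you need $Q_{k+1}(D\omega)\in\Omega^{k}_{\con(0)}(X\times\R)$. This is precisely the well-definedness assertion of Proposition~\ref{homotopy_formula +} one degree higher, so a descending induction on $k$ is required (for $k>n$ the statement is vacuous). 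The paper makes this explicit by setting $E(\omega):=Q_{k+1}(D\omega)+(-1)^k(\omega-\pi^*s^*\omega)$ and assuming inductively $E(\omega)\in\Omega^{k}_{\con(0)}$. Without this step your argument is circular.

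\textbf{The translation step is wrong.} You claim that the shear $\Phi:(x,t)\mapsto(x,t-c_1(x))$ is natural for $Q_k$ and $s$. It is not. The $0$-section $s(x)=(x,0)$ is sent by $\Phi$ to $(x,-c_1(x))$, not to the $0$-section, and $Q_k(\omega)=\int_0^t\omega''\,dt$ becomes $\int_{-c_1(x)}^{t}(\Phi^{-1})^*\omega''\,dt$ in the new coordinates, which is not $Q_k$ of the pulled-back form. So you cannot use this to force the $0$-section among the graphs. The paper handles the condition ``each $c_i$ is either nowhere zero or identically $0$'' differently: the vanishing locus of each $c_i$ is globally subanalytic, and up to a set of smaller dimension one decomposes $X$ into finitely many open globally subanalytic pieces on which each $c_i$ has constant sign or is identically $0$; it then suffices to treat each piece. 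The boundedness is obtained similarly by passing to a small globally subanalytic neighbourhood of each point (infinitely many such neighbourhoods may be needed, but the assertion is local so this is harmless).

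Apart from these points, your localisation to a chart and your use of Lemma~\ref{lem:pair} to produce the functions $c_i$ over open cells are exactly the paper's route.
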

\begin{proof}
As in the proof of Proposition~\ref{homotopy_formula}, the form $Q_k(\omega)$ is constructible. By descending induction on $k$ and functoriality, we may assume that
\[ E(\omega):=Q_{k+1}(D\omega)+(-1)^k(\omega-\pi^*s^*\omega)\in \Omega^{k}_{\con(0)}(X\times\R).\]
Once we have established 
\begin{equation}\label{eq:E} d Q_k(\omega)= E(\omega)\end{equation}
on a $C^1$-zone for $\omega$, this shows that $Q_k(\omega)\in\Omega^{k-1}_{\con(0)}(X\times\R)$ with $DQ_k(\omega)=E(\omega)$ as claimed.

The claim \eqref{eq:E} is local on $X$, hence we may assume that $X$ is affine. We apply Lemma~\ref{lem:pair} to the component functions of $\omega$. It suffices to consider only the open strata $C\subset X$ one at the time. 

Again as the claim is local on $X$, it suffices for each $x\in X$ to consider a gobally subanalytic neighbourhood on which all $c_i$ are bounded. (We may need infinitely many such open sets but this does not matter to the reduction.) We replace $X$ by such a neighbourhood. Finally, the vanishing locus of each $c_i$ is a globally subanalytic subset. Up to sets of smaller dimension, we may decompose $X$ into open globally subanalytic subsets on which $c_i$ does not vanish or is constant and equal to $0$. It suffices to consider each of the pieces separately.
\end{proof}

From now on, we consider $X$ as Lemma~\ref{lem:reduction}. 
We decompose $\omega=\omega'+\omega''\wedge dt$ as in \eqref{can}. 

   \begin{defn} 
   	Let $k\in \IN$. Let $a,b:X\to \R$ with $a<b$ be globally subanalytic $C^2$-functions equal to one of the $c_i$ or disjoint from all of them. We set
   	\begin{align*}
   		Q_{a}^{b}:\Omega^k_{\con(0)}(X\times \R)&\to \Omega^{k-1}_{\con(0)}(X),\\
   		\omega&\mapsto \int_{a(x)}^{b(x)}\omega''dt.
   	\end{align*}
   \end{defn}

\begin{lemma}\label{lemma:section} The form $Q_a^b(\omega)$ is well-defined in
$\Omega^{k-1}_{\con(0)}(X)$.
There is a dense open globally subanalytic $U\subset X$ such that on $U$
the form $Q_{a}^{b}(\omega)$ is $C^1$ with
\[ d Q_{a}^{b}(\omega)-Q_{a}^{b}(d\omega) = (-1)^k(s_a^*\omega-s_b^*\omega)\]
\end{lemma}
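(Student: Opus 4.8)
The plan is to reduce the statement to the classical ``differentiation under the integral sign'' formula, which already appeared in the proof of Proposition~\ref{homotopy_formula} in the $C^q$-case with $q>0$, and then to handle the loss of regularity at the graphs of $a$ and $b$ by a limiting argument of the same flavour as in Proposition~\ref{Stokes +}. First I would note that, since $a$ and $b$ are globally subanalytic $C^2$-functions and $\omega''$ is a constructible continuous form, the parametric integral $Q_a^b(\omega)(x) = \int_{a(x)}^{b(x)} \omega'' \, dt$ is constructible by \cite[Theorem 1.3]{cluckers_miller}, and continuous in $x$ because $\omega''$ is continuous and $a,b$ are continuous and bounded; hence $Q_a^b(\omega)\in \Eh^{k-1}_{\con(0)}(X)$. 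That it actually lies in $\Omega^{k-1}_{\con(0)}(X)$ will follow once we establish the displayed formula on a dense open set, exactly as in the reduction step of Lemma~\ref{lem:reduction}: the right-hand side $(-1)^k(s_a^*\omega - s_b^*\omega) + Q_a^b(d\omega)$ is constructible and $C^0$ by the hypotheses of Lemma~\ref{lem:reduction} (the sections $s_{c_i}^*\omega$ are $C^1$) together with descending induction on $k$, so once $dQ_a^b(\omega)$ equals it on a $C^1$-zone we are done.

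Next I would find the good dense open set $U$. By Lemma~\ref{lem:pair} applied to the coefficient functions of $\omega$, there is a dense open globally subanalytic $U_0 \subset X$ over which $\ma{V}$ is the complement of the graphs of finitely many $C^2$-functions; on $U_0$, for each fixed $x$, the form $\omega''(x,\cdot)$ is $C^1$ in $(x,t)$ away from these graphs, and in particular away from $a(x)$ and $b(x)$. Shrinking $U_0$ further by cell decomposition, I can assume the functions $x\mapsto a(x)$, $x\mapsto b(x)$ and all intermediate $c_i$ are $C^2$ there, and that the finitely many ``exceptional'' fibre values depend $C^2$-ly on $x$. On such a $U$, the classical Leibniz rule for an integral with variable $C^2$ endpoints and a $C^1$ integrand away from the endpoints gives, for the part of $\omega''$ that is genuinely differentiable,
\[
d_x Q_a^b(\omega)(x) = \int_{a(x)}^{b(x)} d_x\omega''\, dt + \omega''(x,b(x))\, db(x) - \omega''(x,a(x))\, da(x),
\]
and combining this with the relation $(d\omega)'' = d_x\omega'' \pm d_t(\text{something})$ from the decomposition \eqref{can}, together with the fundamental theorem of calculus for the $d_t$ term, reorganizes precisely into $Q_a^b(d\omega) + (-1)^k(s_a^*\omega - s_b^*\omega)$, exactly as in the classical computation in \cite[Chapter~I, \S~4]{bott_tu} that underlies Proposition~\ref{homotopy_formula}.

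The main obstacle is that the endpoint values $\omega''(x,a(x))$ and $\omega''(x,b(x))$ need not be $C^1$ in $x$ — only $C^0$ — since the $C^1$-zone of $\omega$ excludes the graphs of $a$ and $b$ themselves, and likewise $\omega''$ is a priori only continuous up to those graphs from each side. I would deal with this by the same $\epsilon$-limit device as in Proposition~\ref{Stokes +}: replace the endpoints $a(x)$ and $b(x)$ by $a(x)+\epsilon$ and $b(x)-\epsilon$ (more precisely, by functions staying strictly inside the strip between consecutive graphs), where the integrand is genuinely $C^1$ in all variables, so the Leibniz formula holds verbatim on $U\times\{\text{that strip}\}$; then let $\epsilon\to 0$. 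Since $\omega$ and hence $\omega''$ and $d\omega$ are continuous on all of $X\times\R$ (the hypotheses of Lemma~\ref{lem:reduction} guarantee $s_{c_i}^*\omega$ is even $C^1$, and in any case continuous), every term converges: the two integral terms converge by dominated convergence / continuity of the integrand with bounded endpoints, and the boundary terms $\omega''(x, b(x)-\epsilon)$, $\omega''(x,a(x)+\epsilon)$ converge uniformly on compacts to $\omega''(x,b(x))$, $\omega''(x,a(x))$ by continuity. The only subtlety is to ensure the limit relation is an equality of \emph{continuous} forms on $U$ — which it is, because both sides are continuous and agree on the dense open locus where the integrand is $C^1$ (compare the uniqueness argument in the remark before Definition~\ref{def:exterior-derivative-Omega-q=0}). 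This yields $dQ_a^b(\omega) = Q_a^b(d\omega) + (-1)^k(s_a^*\omega - s_b^*\omega)$ on $U$, with $U$ a $C^1$-zone for $Q_a^b(\omega)$, completing the proof.
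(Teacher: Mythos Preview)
Your approach differs from the paper's in the key step. You bypass Proposition~\ref{prop:analysis} entirely and argue by a uniform $\epsilon$-limit: once $\omega$ is $C^1$ on the open strip between $a$ and $b$, the Leibniz formula holds for $Q_{a+\epsilon}^{b-\epsilon}(\omega)$, and letting $\epsilon\to 0$ with uniform convergence of both the forms and their derivatives (using only continuity of $\omega$ and $D\omega$) forces $Q_a^b(\omega)$ to be $C^1$ with the stated derivative. The paper instead introduces the auxiliary form $\Omega(x,t)=Q_a^t(\omega)$ on the strip $S=\{a\le t\le b\}$, observes that $d\Omega=Q_a^t(d\omega)+(-1)^k(\omega-\pi^*s_a^*\omega)$ on the interior, finds a dense open $U$ where $s_b^*\Omega=Q_a^b(\omega)$ is $C^1$ (by cell decomposition of a constructible function), and invokes Proposition~\ref{prop:analysis} to transport the interior identity to the boundary graph of $b$. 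Your route is more elementary and, in fact, would give $C^1$-ness on all of $X$ rather than merely on a dense open; the paper's route reuses the divergence-lemma machinery already set up for functoriality in Proposition~\ref{prop:stratification} and Theorem~\ref{thm:functorial}.

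There is, however, a genuine gap in your write-up: you never actually carry out the decomposition along the intermediate $c_j$'s. If some $c_j$ lies strictly between $a$ and $b$, then shrinking the endpoints to $a+\epsilon$ and $b-\epsilon$ still leaves the integrand non-$C^1$ on the graph of $c_j$, so the Leibniz rule is not available for $Q_{a+\epsilon}^{b-\epsilon}$. Your parenthetical ``functions staying strictly inside the strip between consecutive graphs'' is inconsistent with the endpoints you wrote down and does not resolve this. You need the same telescoping $Q_a^b = Q_a^{c_{i+1}} + Q_{c_{i+1}}^{c_{i+2}} + \dots + Q_{c_j}^b$ that the paper uses as its first reduction; after that, each summand has $\omega$ genuinely $C^1$ on the open sub-strip, and your $\epsilon$-limit applies cleanly to each piece.
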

\begin{proof}  Once we have established the formula for $dQ_a^b(\omega)$ on the $C^1$-zone $U$, we put
\[ DQ_a^b(\omega):=Q_a^b(\omega)+(-1)^k(s_a^*\omega-s_b^*\omega).\]
It is continuous on $X$. This will make $Q_a^b$ well-defined.
 
We now establish the formula. We consider the strip
\[ S=\{(x,t)| x\in X, a(x)\leq t\leq b(x)\}.\]
If $\omega$ is $C^1$ on a neighbourhood of $S$, the formula follows from Proposition~\ref{homotopy_formula} by pull-back along sections (or by repeating the computation). We want to reduce to (a limit of) this case.

By assumption, there are indices $i\leq j$ such that
$c_i\leq a<c_{i+1}$, $c_j\leq b\leq c_{j+1}$ (interpreting $c_{-1}=-\infty$, $c_{n+1}=\infty$ for the boundary cases). Note that
\[ Q_a^{b}=Q_a^{c_{i+1}}+Q_{c_{i+1}}^{c_{i+2}}+\dots +Q_{c_j}^{b}.\] 
It suffices to consider the summands separately, so that $\omega$ is $C^1$ in the interior of $S$.

By decomposing 
\[ Q_a^b=Q_a^{(a+b)2}+Q_{(a+b)/2}^b\]
we can even assume that $\omega$ extends to a $C^1$-form on the lower or upper boundary. Without loss generality, it extends to a $C^1$-form on a neighbourhood of the graph of $a$. (The other case is symmetric.)
By assumption both
$\omega$ and $d\omega$ extend continuously to the graph of $b$. We want to apply
Proposition~\ref{prop:analysis} to the $C^2$-manifold with boundary $S$ and
 the continuous constructible $(k-1)$-form
\[ \Omega:(x,t)\mapsto Q_{a}^{t}(\omega)\ .\]
It is $C^1$ on the interior of $S$ because $\omega$ is, with derivative
\begin{equation}\label{eq:Omega}
	 d\Omega=Q_a^t(d\omega)+(-1)^k(\omega-\pi^*s_a^*\omega).
\end{equation}
Note that the right hand side extends continuously to the graph of $b$.

The form $s_b^*\Omega=Q_a^b(\omega)$ is continuous and constructible. Hence there is a dense open globally subanalytic $U\subset X$ such that $s_b^*\Omega$ is $C^1$. On $U$ we have by  Proposition~\ref{prop:analysis} and \eqref{eq:Omega}
\[ 
d \Omega_a^b(\omega)=db^*\Omega =s_b^*\left(Q_a^t(d\omega)+(-1)^k(\omega-\pi^*s_a^*\omega)\right)=Q_a^b(d\omega)+(-1)^k(s_b^*\omega-s_a^*\omega).
\]
This is the claim.
\end{proof}

\begin{proof}[Proof of Proposition~\ref{homotopy_formula +}.]
By Lemma~\ref{lem:reduction}, it suffices to consider the case that $X$ is affine and we are given bounded globally subanalytic $C^2$-functions
$c_1,\dots,c_n:X\to X\times \R$ such that $\omega$ is $C^1$ outside of the graph of the $c_i$. They are either non-vanishing or constant equal to $0$.
By Lemma~\ref{lemma:section} there is a dense open globally subanalytic $U\subset X$ such that all $Q_{c_i}^{c_{i+1}}(\omega)$ are $C^1$ with the correct derivative. We replace $X$ by $U$ in order to simplify the notation.

We need to verify the formula for the derivative in  the $C^1$-zone. Consider $(x,t)$ in the complement of the graphs. There is a unique index $i$ such $c_i(x)<t<c_{i+1}(x)$. There is also $j$ such that
$c_j(x)\leq 0<c_{j+1}(x)$. Assume $j<i$ for simplicity. (The other cases are analogous.) In a neighbourhood of $(x,t)$ choose $T\in\R $ such that
$c_j<T<t$. Note that $\omega$ is $C^1$ on a neighbourhood of $X\times T$. We have
\[ Q_k(\omega)=\pi^*Q_0^{c_{j+1}}(\omega)+\dots+\pi^*Q_{c_{i-1}}^{c_i}(\omega)+\pi^*Q_{c_i}^{T}(\omega)+Q_{T}^t.\]
All summands are $C^1$, hence so is $Q_k(\omega)$. The derivative is
computed termwise. By Lemma~\ref{lemma:section} and Proposition~\ref{homotopy_formula} (for the last summand) this yields
\begin{align*}
 dQ_k(\omega)&=\pi^*dQ_0^{c_{j+1}}(\omega)+\dots+\pi^*dQ_{c_{i-1}}^{c_i}(\omega)+\pi^*Q_{c_i}^{T}(\omega)+dQ_{T}^t\\
     &=Q_0^t(d\omega)+(-1)^k(\omega-\pi^*s^*\omega)\\
     &=Q_{k+1}(d\omega)+(-1)^k(\omega-\pi^*s^*\omega).
\end{align*}
\end{proof}
 
\subsection{Consequences for cohomology}
 
Let $0\leq q\leq p-1$ and $X$ a globally subanalytic $C^p$-manifold.

   \begin{prop}
   	\label{P 4.15}
   	Let $\pi:X\times \IR\to X, (x,t)\mapsto x$ be the natural projection and $s:X\to X\times \IR, x\mapsto (x,0)$ the $0$-section.
   	Then the maps 
   	$$\pi^*:H^\bullet_{\mathrm{dR},\con(q)}(X)\to H^\bullet_{\mathrm{dR},\con(q)}(X\times \IR)$$
   	and
   	$$s^*:H^\bullet_{\mathrm{dR},\con(q)}(X\times \IR)\to H^\bullet_{\mathrm{dR},\con(q)}(X)$$
   	are inverse to each other.
   \end{prop}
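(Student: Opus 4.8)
The plan is to derive \autoref{P 4.15} as a formal consequence of the chain homotopy formula established in \autoref{homotopy_formula} (for $q>0$) and \autoref{homotopy_formula +} (for $q=0$), exactly as in the classical proof (cf.\ \cite[Chapter~I, \S~4]{bott_tu}). First I would record the two easy identities $s^*\circ\pi^* = \id$ on $\Omega^\bullet_{\con(q)}(X)$, which holds already on the level of forms since $\pi\circ s = \id_X$, and hence on cohomology. This gives $s^*\circ\pi^* = \id$ on $H^\bullet_{\mathrm{dR},\con(q)}(X)$ immediately; note that here one uses functoriality of pull-back together with its compatibility with $D$, which is \autoref{functoriality_easy} for $q>0$ and \autoref{thm:functorial} for $q=0$.

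The substantive direction is $\pi^*\circ s^* = \id$ on $H^\bullet_{\mathrm{dR},\con(q)}(X\times\R)$. Here I would invoke the operator $Q_k\colon \Omega^k_{\con(q)}(X\times\R)\to\Omega^{k-1}_{\con(q)}(X\times\R)$ of \autoref{DR 4.12} and the homotopy formula
\[ D Q_k(\omega) - Q_{k+1}(D\omega) = (-1)^k\bigl(\omega - \pi^*s^*\omega\bigr), \]
valid for all $q\geq 0$ by \autoref{homotopy_formula}/\autoref{homotopy_formula +}. Rearranging, $\omega - \pi^*s^*\omega = (-1)^k\bigl(DQ_k(\omega) - Q_{k+1}(D\omega)\bigr)$. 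If $\omega\in\Omega^k_{\con(q)}(X\times\R)$ is a cocycle, i.e.\ $D\omega = 0$, then $\omega - \pi^*s^*\omega = (-1)^k D Q_k(\omega)$ is a coboundary, so $[\omega] = [\pi^*s^*\omega] = \pi^*s^*[\omega]$ in cohomology. Since $\pi^*$ and $s^*$ are well-defined on cohomology (again by the functoriality statements cited above, which in particular guarantee $\pi^*$, $s^*$ commute with $D$), this shows $\pi^*\circ s^* = \id$ on $H^\bullet_{\mathrm{dR},\con(q)}(X\times\R)$.

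Combining the two, $\pi^*$ and $s^*$ are mutually inverse isomorphisms, which is the assertion. I would remark that this is the only place the full strength of the homotopy formula is used, and that the genuine work has already been done: establishing that $Q_k$ lands in $\Omega^{\bullet}_{\con(q)}$ and satisfies the formula — delicate in the case $q=0$ because of the need to control $C^1$-zones and to pass to a limit — was carried out in the previous subsection, so the proof at this point is a one-line homological manipulation. The only potential subtlety to flag is making sure that the intermediary forms $\pi^*s^*\omega$ and $Q_k(\omega)$ genuinely lie in the right spaces $\Omega^\bullet_{\con(q)}$, but this is exactly what \autoref{thm:functorial}, \autoref{homotopy_formula}, and \autoref{homotopy_formula +} provide; there is no remaining obstacle.
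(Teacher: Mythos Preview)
Your proposal is correct and follows essentially the same approach as the paper: use functoriality for $s^*\circ\pi^*=\id$, then apply the chain homotopy formula from \autoref{homotopy_formula}/\autoref{homotopy_formula +} to a cocycle $\omega$ to deduce that $\omega-\pi^*s^*\omega$ is a coboundary. The paper's proof is the same one-line homological manipulation you describe (your sign $(-1)^k$ is in fact the one consistent with the stated homotopy formula).
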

   \begin{proof} By functoriality $s^*\circ \pi^*=
   	\mathrm{id}_{H^\bullet_{\mathrm{dR},\mathrm{con}(q)}(X)}$.
   	It remains to show that $\pi^*\circ s^*=\mathrm{id}_{H^\bullet_{\mathrm{dR},\con(q)}(X\times\R)}$. 
   	Let $\omega\in \Omega^k_{\con(q)}(X\times \IR)$  be with $D\omega=0$. By Proposition \ref{homotopy_formula} and Proposition \ref{homotopy_formula +} we have that
   	\[ \omega-\pi^*s^*\omega=(-1)^{k-1}D(Q_k\omega)\]
   	Hence $[\omega]=\pi^*s^*[\omega]$ and we are done.
   \end{proof}
   
   \begin{cor}
   	\label{C 4.16} 
   	Assume that $f$ and $g$ are
   	globally subanalytic $C^p$-homotopic.
   	Then 
   	$$f^*:H^\bullet_{\mathrm{dR},\con(q)}(Y)\mapsto H^\bullet_{\mathrm{dR},\con(q)}(X)$$
   	and
   	$$g^*:H^\bullet_{\mathrm{dR},\con(q)}(Y)\mapsto H^\bullet_{\mathrm{dR},\con(q)}(X)$$
   	coincide.
   \end{cor}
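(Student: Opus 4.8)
The plan is to deduce \autoref{C 4.16} from the homotopy invariance statement \autoref{P 4.15} together with functoriality of pull-back (\autoref{functoriality_easy} and \autoref{thm:functorial}) in the standard way, the only subtlety being bookkeeping around the interval on which the homotopy is defined. First I would unwind the definition of $C^p$-homotopy: since $f$ and $g$ are globally subanalytic $C^p$-homotopic, there is a globally subanalytic $C^p$-map $h:X\times(-\varepsilon,1+\varepsilon)\to Y$ with $h|_{X\times\{0\}}=f$ and $h|_{X\times\{1\}}=g$. Using that all open intervals are Nash-diffeomorphic (the remark after \autoref{def:definable-Cq-forms}'s homotopy definition), I would replace $(-\varepsilon,1+\varepsilon)$ by $\R$ and obtain a globally subanalytic $C^p$-map $H:X\times\R\to Y$ agreeing with $f$ on $X\times\{0\}$ and with $g$ on $X\times\{t_1\}$ for some fixed $t_1\in\R$; after precomposing with a further Nash automorphism of $\R$ I may even take $t_1$ to be whatever is convenient.

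Next I would introduce the two sections $s_0,s_{t_1}:X\to X\times\R$ given by $x\mapsto(x,0)$ and $x\mapsto(x,t_1)$, so that $f=H\circ s_0$ and $g=H\circ s_{t_1}$. By functoriality of the pull-back on constructible de Rham cohomology — \autoref{functoriality_easy}(2) if $q>0$ and \autoref{thm:functorial}(2) if $q=0$, noting that a globally subanalytic $C^p$-map with $p\ge 1$ is in particular a globally subanalytic $C^1$-map as required there — we get $f^*=s_0^*\circ H^*$ and $g^*=s_{t_1}^*\circ H^*$ as maps $H^\bullet_{\mathrm{dR},\con(q)}(Y)\to H^\bullet_{\mathrm{dR},\con(q)}(X)$. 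It therefore suffices to show $s_0^*=s_{t_1}^*$ as maps $H^\bullet_{\mathrm{dR},\con(q)}(X\times\R)\to H^\bullet_{\mathrm{dR},\con(q)}(X)$.

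For that last point I would invoke \autoref{P 4.15}: both $s_0^*$ and $s_{t_1}^*$ are two-sided inverses of $\pi^*:H^\bullet_{\mathrm{dR},\con(q)}(X)\to H^\bullet_{\mathrm{dR},\con(q)}(X\times\R)$. Indeed \autoref{P 4.15} is stated for the $0$-section, but composing $s_{t_1}$ with the globally subanalytic $C^p$-automorphism $\tau:X\times\R\to X\times\R$, $(x,t)\mapsto(x,t-t_1)$ (which satisfies $\pi\circ\tau=\pi$ and $\tau\circ s_{t_1}=s_0$) shows $s_{t_1}^*=\tau^*\circ s_0^*$ and $\pi^*=\tau^*\circ\pi^*$, whence $s_{t_1}^*\circ\pi^*=\id$ and, since $\pi^*$ is an isomorphism by \autoref{P 4.15}, also $\pi^*\circ s_{t_1}^*=\id$; thus $s_{t_1}^*=(\pi^*)^{-1}=s_0^*$. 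Combining, $f^*=s_0^*\circ H^*=s_{t_1}^*\circ H^*=g^*$, which is the claim.

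The steps are all routine; the only place requiring genuine care is the reduction from a homotopy defined on $X\times(-\varepsilon,1+\varepsilon)$ to one on $X\times\R$ and the verification that the Nash-diffeomorphism of intervals interacts correctly with the sections and the projection — i.e. that after the change of variable the relevant maps are still the honest $0$-section and projection up to a globally subanalytic $C^p$-automorphism, so that \autoref{P 4.15} applies verbatim. I do not expect a real obstacle here, but it is the spot where one must be careful that every map in sight remains globally subanalytic and $C^p$ (hence $C^1$), so that functoriality as established in \autoref{thm:functorial} is legitimately available in the $q=0$ case.
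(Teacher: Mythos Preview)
Your argument is correct and follows the same route as the paper: reparametrize to obtain $H:X\times\R\to Y$, factor $f$ and $g$ through sections of $\pi$, and use \autoref{P 4.15} to identify the two section pull-backs. The paper is slightly tidier in that it chooses the reparametrization $\tau:\R\to(-\varepsilon,1+\varepsilon)$ with $\tau(0)=0$ and $\tau(1)=1$ from the outset, so one lands directly on $s_0$ and $s_1$ and the separate translation argument is unnecessary. One slip in your write-up: from $\tau\circ s_{t_1}=s_0$ contravariance gives $s_{t_1}^*\circ\tau^*=s_0^*$, not $s_{t_1}^*=\tau^*\circ s_0^*$ (the latter does not even compose, since $\tau^*$ acts on $H^\bullet(X\times\R)$ while $s_0^*$ lands in $H^\bullet(X)$); your conclusion $s_{t_1}^*\circ\pi^*=\id$ is still valid once this is corrected.
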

   \begin{proof}
   	Let $\varepsilon>0$ and $h:X\times (-\varepsilon,1+\varepsilon)\to Y$ be a globally subanalytic $C^p$-map such that $f=h_0$ and $g=h_1$.
   	Choose a globally subanalytic $C^p$-map $\tau:\IR\to (-\varepsilon,1+\varepsilon)$ such that $\tau(0)=0$ and $\tau(1)=1$.
   	Set 
   	$$H:X\times \IR\to Y, (x,t)\mapsto h(x,\tau(t)).$$
   	We have that
   	$f=H\circ s_0$ and $g=H\circ s_1$ and therefore
   	$f^*=s_0^*\circ H^*$ and $g^*=s_1^*\circ H^*$.
   	The maps $s_0^*$ and $s_1^*$ are both inverse to $\pi^*$ by Proposition \ref{P 4.15} and therefore equal. 
   	Hence $f^*=g^*$.
   \end{proof}
   
   \begin{cor}
   	\label{C 4.17}
   	Let $X,Y$ be globally subanalytically $C^p$-homotopy equivalent. Then $H^\bullet_{\mathrm{dR},\con(q)}(X)$ and $H^\bullet_{\mathrm{dR},\con(q)}(Y)$ are isomorphic.
   \end{cor}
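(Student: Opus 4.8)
The plan is to deduce this formally from Corollary~\ref{C 4.16} (homotopy invariance) together with the functoriality of constructible de Rham cohomology. By definition of globally subanalytic $C^p$-homotopy equivalence, we are given globally subanalytic $C^p$-maps $\varphi: X \to Y$ and $\psi: Y \to X$ such that $\psi \circ \varphi$ is globally subanalytically $C^p$-homotopic to $\mathrm{id}_X$ and $\varphi \circ \psi$ is globally subanalytically $C^p$-homotopic to $\mathrm{id}_Y$.

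First I would record that pullback along globally subanalytic $C^p$-maps is functorial on the level of cohomology: for $q>0$ this is Remark~\ref{functoriality_easy}, and for $q=0$ it is Theorem~\ref{thm:functorial}. In both cases the chart-wise description of the pullback of forms shows that $(g \circ h)^* = h^* \circ g^*$ as maps of the constructible de Rham complexes, and that $\mathrm{id}^*$ is the identity; passing to cohomology, we obtain induced $\R$-linear maps $\varphi^*: H^\bullet_{\mathrm{dR},\con(q)}(Y) \to H^\bullet_{\mathrm{dR},\con(q)}(X)$ and $\psi^*: H^\bullet_{\mathrm{dR},\con(q)}(X) \to H^\bullet_{\mathrm{dR},\con(q)}(Y)$ with $(\psi\circ\varphi)^* = \varphi^*\circ\psi^*$ and $(\varphi\circ\psi)^* = \psi^*\circ\varphi^*$.

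Then I would simply compute: by Corollary~\ref{C 4.16}, since $\psi \circ \varphi$ is globally subanalytically $C^p$-homotopic to $\mathrm{id}_X$, we get $\varphi^* \circ \psi^* = (\psi \circ \varphi)^* = \mathrm{id}_X^* = \mathrm{id}_{H^\bullet_{\mathrm{dR},\con(q)}(X)}$. Symmetrically, using that $\varphi \circ \psi$ is globally subanalytically $C^p$-homotopic to $\mathrm{id}_Y$, we get $\psi^* \circ \varphi^* = (\varphi \circ \psi)^* = \mathrm{id}_{H^\bullet_{\mathrm{dR},\con(q)}(Y)}$. Hence $\varphi^*$ and $\psi^*$ are mutually inverse isomorphisms, and in particular $H^\bullet_{\mathrm{dR},\con(q)}(X) \cong H^\bullet_{\mathrm{dR},\con(q)}(Y)$, as claimed.

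I do not expect any real obstacle: all the substance sits in Corollary~\ref{C 4.16}, which in turn rests on Propositions~\ref{homotopy_formula} and~\ref{homotopy_formula +}. The only point requiring a little care is the contravariance of the pullback, so that the homotopy $\psi\circ\varphi \simeq \mathrm{id}_X$ is matched with the composite $\varphi^*\circ\psi^*$ and the homotopy $\varphi\circ\psi \simeq \mathrm{id}_Y$ with $\psi^*\circ\varphi^*$, rather than the other way around.
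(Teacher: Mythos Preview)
Your proposal is correct and is exactly the standard deduction from Corollary~\ref{C 4.16} and functoriality that the paper intends; the paper states Corollary~\ref{C 4.17} without proof precisely because this argument is immediate. There is nothing to add.
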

   
   \begin{cor}\label{cor:contractible}
   	Let $X$ be globally subanalytically $C^p$-contractible.
   	Then
   	\[ H^k_{\mathrm{dR},\con(q)}(X)=
   	\begin{cases} \R,&k=0,\\
   		0,&k>0.
   	\end{cases} \]
   \end{cor}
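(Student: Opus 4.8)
The plan is to deduce the corollary from homotopy invariance. By hypothesis $X$ is globally subanalytically $C^p$-contractible, so there is a point $a\in X$ such that $\mathrm{id}_X$ is globally subanalytically $C^p$-homotopic to the constant map $c_a$. Viewing $\{a\}$ as a zero-dimensional globally subanalytic $C^p$-manifold, the inclusion $\{a\}\hookrightarrow X$ together with the constant map $X\to\{a\}$ then exhibits $X$ and $\{a\}$ as globally subanalytically $C^p$-homotopy equivalent: one composite is $\mathrm{id}_{\{a\}}$ and the other is $c_a$, which is homotopic to $\mathrm{id}_X$ by assumption. Applying \autoref{C 4.17} gives an isomorphism
\[ H^\bullet_{\mathrm{dR},\con(q)}(X)\;\cong\;H^\bullet_{\mathrm{dR},\con(q)}(\{a\}).\]

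It then remains to compute the right-hand side. Since $\{a\}$ has dimension $0$, the constructible de Rham complex of $\{a\}$ reduces to $\Omega^0_{\con(q)}(\{a\})$ placed in degree $0$, all higher terms $\Omega^k_{\con(q)}(\{a\})$ vanishing; and $\Omega^0_{\con(q)}(\{a\})=\R$, the only functions on a point being the constants (which are trivially constructible and of class $C^{q+1}$). Consequently $D\equiv 0$ on this complex, whence $H^0_{\mathrm{dR},\con(q)}(\{a\})=\R$ and $H^k_{\mathrm{dR},\con(q)}(\{a\})=0$ for $k>0$. Combining this with the displayed isomorphism yields precisely the claimed computation.

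I do not anticipate any genuine obstacle: the statement is a formal consequence of \autoref{C 4.17} (homotopy invariance of constructible de Rham cohomology) together with the triviality of the de Rham complex on a point. The only two things that must be observed along the way are that a single point indeed carries the structure of a globally subanalytic $C^p$-manifold and that the contractibility hypothesis supplies the homotopy equivalence with it; both are immediate from the definitions recalled in the previous sections.
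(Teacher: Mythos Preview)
Your proof is correct and follows the same approach as the paper: invoke \autoref{C 4.17} to reduce to the case of a point, then observe that the constructible de Rham complex of a point is $\R$ concentrated in degree~$0$.
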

   \begin{proof}
   	This follows from Corollary \ref{C 4.17} and
   	the fact that the de Rham complex for a point is concentrated in
   	degree $0$.
   \end{proof}

		\section{Geometric Preparations}
		
		Let $p\in \IN\cup\{\omega\}$.
		A globally subanalytic $C^p$-cell is a globally subanalytic $C^p$-manifold.

		\begin{rem}
			\label{R 5.1}
			Let $C\subset \IR^N$ be a globally subanalytic $C^p$-cell of dimension $n$.
			Then $C$ is globally subanalytically $C^p$-isomorphic to the $n$-dimensional open hypercube $I^n=(0,1)^n$.
		\end{rem}

		\begin{prop}
			\label{P 5.2}
			Let $C\subset \IR^M$ be a globally subanalytic $C^p$-cell. Then $C$ is globally subanalytically $C^p$-contractible.
		\end{prop}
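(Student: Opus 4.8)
The plan is to reduce to the model case via \autoref{R 5.1} and then write down an explicit globally subanalytic $C^p$-homotopy contracting the open hypercube. By \autoref{R 5.1}, the cell $C$ is globally subanalytically $C^p$-isomorphic to $I^n = (0,1)^n$, and globally subanalytic $C^p$-contractibility is obviously preserved under globally subanalytic $C^p$-diffeomorphism (pull back the contraction homotopy). So it suffices to treat $X = I^n$. Here I would first reduce to $n=1$: a product of contractible spaces is contractible, since if $h_i : I \times (-\varepsilon, 1+\varepsilon) \to I$ contracts the $i$-th factor to $a_i$, then $(x,t) \mapsto (h_1(x_1,t), \dots, h_n(x_n,t))$ is a globally subanalytic $C^p$-map contracting $I^n$ to $(a_1, \dots, a_n)$.

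For the one-dimensional case I would produce a globally subanalytic $C^p$-contraction of $I = (0,1)$ to its midpoint $1/2$. The naive choice $h(x,t) = (1-t)x + t/2$ is affine, hence globally subanalytic and $C^\omega$, and for $t \in [0,1]$ one checks $h(x,t) \in (0,1)$; the only issue is that the definition of definable $C^p$-homotopy demands a map defined on $I \times (-\varepsilon, 1+\varepsilon)$ staying inside $I$, and near $t = 1+\varepsilon$ or $t$ slightly negative the affine formula can leave $(0,1)$. I would fix this by precomposing in the $t$-variable with a globally subanalytic $C^p$-map $\tau : (-\varepsilon, 1+\varepsilon) \to [0,1]$ with $\tau(0) = 0$, $\tau(1) = 1$ (for instance a suitable truncation, smoothed as in the partition-of-unity constructions available here, or simply noting $\tau$ can be taken semialgebraic of class $C^p$ using the polynomial $p < \infty$ case and a Nash function in the $p = \omega$ case), and then setting $h(x,t) := (1-\tau(t))\,x + \tau(t)/2$. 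This is globally subanalytic, $C^p$, maps into $(0,1)$ for all $t$, restricts to $\mathrm{id}_I$ at $t=0$ and to $c_{1/2}$ at $t=1$, giving the required homotopy between $\mathrm{id}_I$ and the constant map.

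The main obstacle — and it is a minor technical one rather than a conceptual one — is exactly this boundary bookkeeping: ensuring the homotopy is defined on an \emph{open} interval slightly larger than $[0,1]$ while its image stays in the open cell, and doing so with a globally subanalytic $C^p$ (not merely $C^0$) reparametrization $\tau$. Once the existence of such a $\tau$ is granted (which is elementary: take any globally subanalytic $C^p$ function that is $0$ on a neighbourhood of $(-\varepsilon,0]$, $1$ on a neighbourhood of $[1,1+\varepsilon)$, and monotone in between — such functions exist in $\IR_\mathrm{an}$ for every finite $p$, and even in $\IR_\mathrm{alg}$, hence are $C^\omega$ after shrinking), the rest is the routine product and transport arguments above. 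Assembling: $I^n$ is globally subanalytically $C^p$-contractible, hence so is $C$ by transport along the isomorphism of \autoref{R 5.1}.
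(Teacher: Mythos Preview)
Your approach is the same as the paper's: reduce to $I^n$ via \autoref{R 5.1} and contract by an affine homotopy $(x,t)\mapsto (1-\tau(t))x+\tau(t)a$ after reparametrising in $t$ so that the image stays in $I^n$ on an open interval around $[0,1]$. The reduction to $n=1$ is an unnecessary detour (the paper contracts $I^n$ directly to any $a\in I^n$), but harmless.

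There is one genuine slip. Your concrete candidate for $\tau$---a globally subanalytic $C^p$-function that is identically $0$ on a neighbourhood of $0$ and identically $1$ on a neighbourhood of $1$---cannot exist when $p=\omega$: a real-analytic function constant on a nonempty open set is constant everywhere, so ``hence are $C^\omega$ after shrinking'' is false. The paper avoids this by giving a single polynomial that works uniformly for all $p$ including $\omega$: set $\sigma(t)=3t^2-2t^3$. One checks $\sigma(0)=0$, $\sigma(1)=1$, and $\sigma$ maps $[-1/2,3/2]$ into $[0,1]$ (the extrema on this interval occur at $t=0,1$ and at the endpoints $-1/2,3/2$, where $\sigma$ takes the values $0,1,1,0$). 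Then
\[
h:I^n\times(-1/2,3/2)\to I^n,\qquad (x,t)\mapsto (1-\sigma(t))x+\sigma(t)a
\]
is polynomial, hence semialgebraic and $C^\omega$, lands in $I^n$ because it is a convex combination of $x\in I^n$ and $a\in I^n$, and gives $h(\cdot,0)=\mathrm{id}_{I^n}$, $h(\cdot,1)=c_a$. Replace your $\tau$ by this $\sigma$ and the argument is complete for every $p\in\IN\cup\{\omega\}$.
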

		
		\begin{proof}
			By Remark \ref{R 5.1} we can assume that $C=I^n$ where $n$ is the dimension of $C$. Consider the polynomial $\sigma(t)=3t^2-2t^3$. We have that $\sigma(0)=0, \sigma(1)=1$ and $\sigma(t)\in [0,1]$ for all $t \in [-1/2,3/2]$.
			Let $a\in I^n$. We see that $\mathrm{id}_{I^n}$ and $c_a$ are globally subanalytically $C^p$-homotopic via
			$$h:I^n\times (-1/2,3/2)\to I^n, (x,t)\mapsto (1-\sigma(t))x+\sigma(t)a.$$
		\end{proof}	
		
		\noindent For the proof of the constructible de Rham theorem we need the following weakening of open globally subanalytic cells.

		\begin{defn}
			A subset $V$ of $\IR^n$ is called a \emph{ribbon} if there is an open globally subanalytic subset $W$ of $\IR^{n-1}$ and globally subanalytic $C^0$-functions $a,b:W\to \IR$ with $a<b$ such that 
			$$V=\{x=(x',x_n)\in \IR^n\mid x'\in W, a(x')<x_n<b(x')\}.$$
			The set $W$ is called the \emph{base} of $V$.
		\end{defn}
		
		\begin{rem}
			A ribbon in $\IR^n$ is an open globally subanaytic subset of $\IR^n$.
		\end{rem}
		
		\begin{ex}
			\label{E 5.5}
			Let $C\subset \IR^n$ be an open globally subanalytic $C^0$-cell. Then $C$ is a ribbon.
		\end{ex}
		
		\noindent 
		The intersection of open cells is in general not an open cell. But the class of ribbons is stable under finite intersections.
		
		\begin{lemma}
			\label{L 5.6}
			Let $V_1,V_2\subset \IR^n$ be ribbons. Then $V_1\cap V_2$ is a ribbon.
		\end{lemma}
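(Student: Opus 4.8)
Lemma L 5.6 asserts that the intersection of two ribbons in $\mathbb{R}^n$ is again a ribbon.

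The plan is to work with the projection $\pi:\mathbb{R}^n\to\mathbb{R}^{n-1}$, $(x',x_n)\mapsto x'$. Write $V_i=\{(x',x_n)\mid x'\in W_i,\ a_i(x')<x_n<b_i(x')\}$ for ribbons with bases $W_i$ and globally subanalytic continuous functions $a_i<b_i$ on $W_i$, $i=1,2$. The candidate base for $V_1\cap V_2$ is the set $W$ of those $x'\in W_1\cap W_2$ for which the open intervals $(a_1(x'),b_1(x'))$ and $(a_2(x'),b_2(x'))$ actually overlap, i.e.
\[
W:=\{x'\in W_1\cap W_2\mid \max(a_1(x'),a_2(x'))<\min(b_1(x'),b_2(x'))\},
\]
and on $W$ one sets $a:=\max(a_1,a_2)$ and $b:=\min(b_1,b_2)$. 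Then tautologically $V_1\cap V_2=\{(x',x_n)\mid x'\in W,\ a(x')<x_n<b(x')\}$, since a point lies in both $V_i$ iff its first coordinate lies in both $W_i$ and its last coordinate lies in both intervals, which is the same as lying in the (necessarily nonempty) intersection interval $(a(x'),b(x'))$.

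It then remains to check the two defining properties of a ribbon for this data. First, $a$ and $b$ are globally subanalytic and continuous on $W$: they are $\max$ and $\min$ of finitely many globally subanalytic continuous functions, and $\max,\min$ of continuous functions are continuous, while the class of globally subanalytic sets/functions is closed under such definable operations (the graphs of $\max(a_1,a_2)$ and $\min(b_1,b_2)$ are first-order definable from the graphs of the $a_i,b_i$). Second, and this is the only point requiring a small argument, $W$ is an \emph{open} globally subanalytic subset of $\mathbb{R}^{n-1}$: it is globally subanalytic because it is cut out by the condition $\max(a_1,a_2)<\min(b_1,b_2)$ inside the globally subanalytic open set $W_1\cap W_2$, and it is open because $W_1\cap W_2$ is open and the function $x'\mapsto \min(b_1(x'),b_2(x'))-\max(a_1(x'),a_2(x'))$ is continuous on $W_1\cap W_2$, so the locus where it is strictly positive is open. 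With $W$ open globally subanalytic and $a<b$ globally subanalytic continuous on $W$, and $V_1\cap V_2$ exhibited in the required form, the lemma follows.

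I do not expect a genuine obstacle here; the content is essentially bookkeeping with $\max$/$\min$ of definable continuous functions. The one place to be slightly careful is that a ribbon is by definition nonempty-fibered only where declared, so one should note that over $W$ the interval $(a(x'),b(x'))$ is automatically nonempty by construction of $W$, and that points of $(W_1\cap W_2)\setminus W$ contribute empty fibers to $V_1\cap V_2$ and are correctly excluded; no separate connectivity or cell-structure input is needed, which is precisely why ribbons, unlike cells, are closed under intersection.
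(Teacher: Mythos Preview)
Your proof is correct and follows exactly the same approach as the paper: define $W=\{x'\in W_1\cap W_2\mid \max(a_1,a_2)<\min(b_1,b_2)\}$, set $a=\max(a_1,a_2)$, $b=\min(b_1,b_2)$, and observe that $V_1\cap V_2$ is the ribbon over $W$ bounded by $a$ and $b$. You in fact supply a bit more justification than the paper does (openness of $W$, continuity and global subanalyticity of $a,b$), but the argument is identical.
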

		
		\begin{proof}
			Let $W_1$ be the base of $V_1$ and $W_2$ be the base of $V_2$.
			Let $a_1,b_1:W_1\to \IR$ be globally subanalytic $C^0$-functions such that
			$$V_1=\{x=(x',x_n)\in \IR^n\mid x'\in W_1, a_1(x')<x_n<b_1(x')\}$$
			and let $a_2,b_2:W_2\to \IR$ be globally subanalytic $C^0$-functions such that
			$$V_2=\{x=(x',x_n)\in \IR^n\mid x'\in W_2, a_2(x')<x_n<b_2(x')\}.$$
			Let $\tilde{W}:=W_1\cap W_2$ and set
			$$a:\tilde{W}\to \IR, x'\mapsto \max\{a_1(x'),a_2(x')\},$$
			$$b:\tilde{W}\to \IR, x'\mapsto \min\{b_1(x'),b_2(x')\}.$$
			Let 
			$$W:=\{x'\in \tilde{W}\mid a(x')<b(x')\}$$
			and
			$$V:=\{x=(x',x_n)\in \IR^n\mid x'\in W, a(x')<x_n<b(x')\}.$$
			We have that $V$ is a ribbon and $V_1\cap V_2=V$.
		\end{proof}

		\begin{prop}\label{P 5.7}
			Let $V\subset \IR^n$ be a ribbon and let $W\subset \IR^{n-1}$ be its base.
			Then the projection $V\to W$ is a globally subanalytic $C^p$-homotopy equivalence.
		\end{prop}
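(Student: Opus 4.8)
The plan is to realise the projection $\pi\colon V\to W$, $(x',x_n)\mapsto x'$, as a globally subanalytic $C^p$-homotopy equivalence by producing a globally subanalytic $C^p$-section of it and contracting $V$ onto the image of that section fibrewise. The whole argument hinges on one point: the existence of a globally subanalytic $C^p$-function $\gamma\colon W\to\R$ with $a<\gamma<b$ on all of $W$. Granting this, put $s\colon W\to V$, $x'\mapsto(x',\gamma(x'))$; this is a globally subanalytic $C^p$-section of $\pi$, so $\pi\circ s=\mathrm{id}_W$. To show $s\circ\pi$ is globally subanalytically $C^p$-homotopic to $\mathrm{id}_V$, I would reuse the polynomial $\sigma(t)=3t^2-2t^3$ from the proof of \autoref{P 5.2} (so $\sigma(0)=0$, $\sigma(1)=1$, and $\sigma(t)\in[0,1]$ for $t\in[-1/2,3/2]$) and set
\[ h\colon V\times(-\tfrac12,\tfrac32)\to\R^n,\qquad h(x',x_n,t):=\bigl(x',\,(1-\sigma(t))x_n+\sigma(t)\gamma(x')\bigr). \]
For fixed $(x',t)$ the last coordinate is a convex combination of $x_n$ and $\gamma(x')$, both lying in $(a(x'),b(x'))$, so $h$ in fact takes values in $V$; it is globally subanalytic and $C^p$ because $\sigma$ is a polynomial and $\gamma$ is globally subanalytic $C^p$. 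Since $h|_{V\times\{0\}}=\mathrm{id}_V$ and $h|_{V\times\{1\}}=s\circ\pi$, the maps $\varphi:=\pi$ and $\psi:=s$ give the desired globally subanalytic $C^p$-homotopy equivalence.

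This reduces the proposition to a definable smoothing statement: writing $c:=(a+b)/2$ and noting that $a<\gamma<b$ is equivalent to $|\gamma-c|<(b-a)/2$, I need a globally subanalytic $C^p$-approximation $\gamma$ of the continuous globally subanalytic function $c$ to within the positive continuous globally subanalytic error $(b-a)/2$. For $p<\infty$ I would obtain this by the standard patching scheme: take a $C^p$-cell decomposition of $\R^{n-1}$ compatible with $W$ and making $a$ and $b$ (hence $c$) of class $C^p$ on each cell, from it extract a finite open globally subanalytic cover $\{U_i\}$ of $W$ on each of which some globally subanalytic $C^p$-function $\gamma_i$ satisfies $a<\gamma_i<b$ (over an open cell one may just take $\gamma_i=c$), and then glue by a globally subanalytic $C^p$-partition of unity $\{f_i\}$ subordinate to $\{U_i\}$, which exists by \autoref{partition-of-unity} since $\R_\an$ is polynomially bounded and admits $C^\infty$-cell decomposition. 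The function $\gamma:=\sum_i f_i\gamma_i$ is then globally subanalytic $C^p$ and satisfies $a=\sum_i f_i a<\sum_i f_i\gamma_i=\gamma<\sum_i f_i b=b$.

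The hard part, as usual with such smoothing arguments, will be the construction of the finite open cover $\{U_i\}$: the open cells of a single cell decomposition need not cover $W$, so one has to thicken the lower-dimensional strata to open globally subanalytic neighbourhoods inside $W$ on which $a$ and $b$ are still separated, using the continuity of $a,b$ and the finiteness properties of o-minimal geometry. The analytic case $p=\omega$ is more delicate still, because $\R_\an$, being polynomially bounded, admits no globally subanalytic $C^\infty$- (hence no $C^\omega$-) partition of unity, so gluing is unavailable; there I would instead either build $\gamma$ directly, cell by cell, after a globally subanalytic $C^\omega$-cell decomposition adapted to $a,b$, or deduce the $C^\omega$-statement from the already-established $C^1$-version by approximating the maps and homotopies constructed above.
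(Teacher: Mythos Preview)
Your reduction is exactly the paper's: produce a globally subanalytic $C^p$-function $\gamma$ with $a<\gamma<b$, take $s(x')=(x',\gamma(x'))$ as a section, and use the homotopy $h(x',x_n,t)=(x',(1-\sigma(t))x_n+\sigma(t)\gamma(x'))$ with the same cubic $\sigma(t)=3t^2-2t^3$. The only difference is in how the existence of $\gamma$ is secured.

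The paper does \emph{not} construct $\gamma$ by hand. It invokes the $C^p$-approximation theorem of A.~Valette and G.~Valette \cite[Theorem~1.1]{VV_approximations}, which directly produces a globally subanalytic $C^p$-function $c:W\to\R$ with $a<c<b$, uniformly for all $p\in\IN\cup\{\omega\}$. This single citation replaces the whole second half of your proposal.

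Your attempt to reprove this approximation result is where the gaps lie. For $p<\infty$, the partition-of-unity scheme is the right idea, but you yourself flag the essential difficulty: the open cells do not cover $W$, and ``thickening the lower-dimensional strata'' while keeping a $C^p$-function strictly between $a$ and $b$ on each thickened piece is precisely the nontrivial content of definable $C^p$-approximation; the sentence invoking ``continuity of $a,b$ and finiteness properties'' does not yet constitute an argument. For $p=\omega$ the situation is worse: there is no $C^\omega$-partition of unity in $\R_\an$, as you note, and neither of your suggested workarounds (direct cell-by-cell construction, or bootstrapping from the $C^1$-case) is anywhere near a proof---the second in particular would require approximating a $C^1$-homotopy equivalence by a $C^\omega$ one, which is again a Valette--Valette-type statement. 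So for $p=\omega$ your proof is essentially missing, and for $p<\infty$ it is an outline that defers the real work. The cleanest fix is simply to cite \cite{VV_approximations} for the existence of $\gamma$, as the paper does.
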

		
		\begin{proof}
			Let $a,b:W\to \IR$ be globally subanalytic $C^0$-functions with $a<b$ such that
			$$V=\{x=(x',x_n)\in \IR^n\mid x'\in W, a(x')<x_n<b(x')\}.$$
			By A. Valette and G. Valette \cite[Theorem 1.1]{VV_approximations} there is a globally subanalytic $C^p$-function $c:W\to \IR$ such that $a<c<b$,
			Consider 
			$$\varphi:V\to W, x=(x',x_n)\mapsto x',\;\;\;\; \psi: W\to V, x'\mapsto (x',c(x')).$$
			We have that 
			$$f:=\psi\circ\varphi: V\to V, x=(x',x_n)\mapsto (x',c(x')),\;\;\; g:=\varphi\circ\psi: W\to W, x'\mapsto x'.$$
			So $g=\mathrm{id}_W$. It remains to show that
			$f$ is globally subanalytic $C^p$-homotopic to $\mathrm{id}_V$.
			Consider as in the proof of Proposition \ref{P 5.2} the polynomial $\sigma(t)=3t^2-2t^3$
			and take
			$$h:V\times (-1/2,3/2)\to V, (x,t)=(x',x_n,t)\mapsto (x',(1-\sigma(t))x_n+\sigma(t)c(x')).$$
		\end{proof}
		
		We can weaken the regularity condition stated in Remark \ref{P 5.2}.
		
		\begin{cor}
			\label{C 5.8}
			Let $C\subset \IR^n$ be an open globally subanalytic $C^0$-cell. Then $C$ is globally subanalytic $C^p$-homotopy equivalent to a singleton.
		\end{cor}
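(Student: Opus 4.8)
The plan is to argue by induction on the ambient dimension $n$, peeling off the last coordinate and descending to the base of a ribbon via Proposition~\ref{P 5.7}.

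For $n=0$ there is nothing to prove, since $\IR^0$ is a point. Assume $n\geq 1$ and that the statement holds in dimensions $<n$, and let $C\subset\IR^n$ be an open globally subanalytic $C^0$-cell. By the recursive description of cells, $C=\{(x',x_n)\mid x'\in D,\ a(x')<x_n<b(x')\}$ with $D\subset\IR^{n-1}$ an open globally subanalytic $C^0$-cell and $a<b$ continuous globally subanalytic functions on $D$ with values in $\IR\cup\{\pm\infty\}$. Since the notion of a \emph{ribbon} requires finite boundary functions, I would first normalise using the coordinatewise map $\Phi_n=\phi\times\cdots\times\phi\colon\IR^n\to(-1,1)^n$, where $\phi(t)=t/\sqrt{1+t^2}$ is a strictly increasing semialgebraic (hence globally subanalytic) $C^\omega$-diffeomorphism $\IR\to(-1,1)$ with $\phi(t)\to\pm 1$ as $t\to\pm\infty$. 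Then $\Phi_n$ is a globally subanalytic $C^\omega$-diffeomorphism, it carries open globally subanalytic $C^0$-cells to open globally subanalytic $C^0$-cells (by the recursive definition of cells together with monotonicity of $\phi$), and it turns the possibly infinite boundary functions of $C$ into finite continuous globally subanalytic ones with values in $[-1,1]$. As any globally subanalytic $C^p$-diffeomorphism is in particular a globally subanalytic $C^p$-homotopy equivalence, it suffices to prove the statement for $\Phi_n(C)$. We may therefore assume that $C$ is a ribbon, say $C=\{(x',x_n)\mid x'\in W,\ a(x')<x_n<b(x')\}$ with $W\subset\IR^{n-1}$ an open globally subanalytic $C^0$-cell and $a,b\colon W\to\IR$ continuous globally subanalytic with $a<b$. (Alternatively, the ribbon property is supplied by Example~\ref{E 5.5}.)

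By Proposition~\ref{P 5.7} the projection $C\to W$ is a globally subanalytic $C^p$-homotopy equivalence, and by the induction hypothesis $W$ is globally subanalytic $C^p$-homotopy equivalent to a singleton. Since being globally subanalytic $C^p$-homotopy equivalent is an equivalence relation, in particular transitive, $C$ is globally subanalytic $C^p$-homotopy equivalent to a singleton. This completes the induction.

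The only step requiring genuine care is the preliminary normalisation, i.e. checking that $\Phi_n$ preserves the class of open globally subanalytic $C^0$-cells (so that the induction can be applied to the new base $W$, which must itself be a cell); everything else is bookkeeping built on top of Proposition~\ref{P 5.7}, which already contains the substantive input — the Valette--Valette approximation providing a globally subanalytic $C^p$ section through the interior of a ribbon.
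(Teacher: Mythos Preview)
Your proof is correct and follows essentially the same inductive strategy as the paper: peel off the last coordinate via Proposition~\ref{P 5.7} and descend to the base, which is again an open $C^0$-cell in one fewer dimension. The only difference is that the paper invokes Example~\ref{E 5.5} directly (and starts the induction at $n=1$ via Proposition~\ref{P 5.2}), whereas you insert the normalization $\Phi_n:\IR^n\to(-1,1)^n$ to guarantee finite boundary functions before appealing to the ribbon machinery; this extra care is harmless---and arguably warranted if one allows $\pm\infty$ as cell boundaries---but the core argument is identical.
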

		\begin{proof}
			
			We do induction on $n$.
			
			\vs{0.2cm}\noindent
			$n=1$: Then $C$ is an open interval and hence an open globally subanalytic $C^p$-cell.
			We are done by Proposition \ref{P 5.2}.
			
			\vs{0.2cm}\noindent
			$n-1\to n$: Let $B$ be the base of $C$. Then $B$ is an open globally subanalytic $C^0$-cell in $\IR^{n-1}$. By Example \ref{E 5.5} and Proposition \ref{P 5.7} we have that $C$ and $B$ are globally subanalytic $C^p$-homotopy equivalent. We are done by the inductive hypothesis.
		\end{proof}
	
We can formulate now the 
Poincar\'e Lemma in our setting: 

\begin{cor}
[Poincar\'e Lemma]
\label{Poincare}

	Let $C\subset \IR^n$ be an open globally subanalytic $C^0$-cell.
Then
\[ H^k_{\mathrm{dR},\con(q)}(C)=
\begin{cases} \R,&k=0,\\
	0,&k>0.
\end{cases} \]
for every $q\in \IN_0\cup \{\omega\}$.
\end{cor}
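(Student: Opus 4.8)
The plan is to deduce the Poincar\'e Lemma directly from the homotopy invariance of constructible de Rham cohomology established in Section 4, together with the geometric results just proved. The key observation is that the statement we want is exactly Corollary~\ref{cor:contractible} applied to $C$, so it suffices to show that an open globally subanalytic $C^0$-cell $C\subset\IR^n$ is globally subanalytic $C^p$-contractible, or at least globally subanalytic $C^p$-homotopy equivalent to a point. But this is precisely the content of Corollary~\ref{C 5.8}.

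Concretely, I would argue as follows. By Corollary~\ref{C 5.8}, the open globally subanalytic $C^0$-cell $C$ is globally subanalytic $C^p$-homotopy equivalent to a singleton. By Corollary~\ref{C 4.17} (homotopy invariance of constructible de Rham cohomology for globally subanalytic $C^p$-homotopy equivalences), we get an isomorphism
\[ H^\bullet_{\mathrm{dR},\con(q)}(C)\;\cong\;H^\bullet_{\mathrm{dR},\con(q)}(\{\mathrm{pt}\}). \]
Finally, the constructible de Rham complex of a point (an affine $C^p$-manifold of dimension $0$) is concentrated in degree $0$ where it equals $\IR$, so its cohomology is $\IR$ in degree $0$ and $0$ in all positive degrees. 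Combining these gives the stated formula. One should note that Corollary~\ref{C 5.8} already builds in the regularity weakening from $C^p$-cells to $C^0$-cells via Proposition~\ref{P 5.7} and the Valette--Valette approximation theorem, so there is nothing further to do on the geometric side; and the hypothesis $p\in\IN\cup\{\omega\}$ together with $0\le q\le p-1$ makes the constructible de Rham apparatus available.

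There is essentially no obstacle here: the corollary is a formal consequence of two results stated earlier, and the only thing to check is that all the hypotheses line up — that $C$, being an open globally subanalytic subset of $\IR^n$, is indeed a globally subanalytic $C^p$-manifold (it is, as an open subset of $\IR^n$), and that $q$ ranges over the allowed values $\IN_0\cup\{\omega\}$ as required for Section~4 to apply (with the convention $p=\omega$, $q$ may be $\omega$, and with $p$ finite $q\in\IN_0$). If one wanted a fully self-contained argument for the reader one could alternatively invoke Corollary~\ref{cor:contractible} after first upgrading Corollary~\ref{C 5.8} to genuine contractibility, but the homotopy-equivalence version already suffices since $H^\bullet_{\mathrm{dR},\con(q)}$ of a point is easily computed by hand.

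\begin{proof}
By Corollary~\ref{C 5.8} the open globally subanalytic $C^0$-cell $C$ is globally subanalytic $C^p$-homotopy equivalent to a singleton. Hence by Corollary~\ref{C 4.17}
\[ H^k_{\mathrm{dR},\con(q)}(C)\cong H^k_{\mathrm{dR},\con(q)}(\mathrm{pt}) \]
for all $k$. Since a point is an affine globally subanalytic $C^p$-manifold of dimension $0$, its constructible de Rham complex is concentrated in degree $0$, where it is $\IR$. Therefore $H^0_{\mathrm{dR},\con(q)}(C)=\IR$ and $H^k_{\mathrm{dR},\con(q)}(C)=0$ for $k>0$.
\end{proof}
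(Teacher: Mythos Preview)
Your proof is correct and matches the paper's own argument essentially verbatim: the paper cites Corollary~\ref{cor:contractible} together with Corollary~\ref{C 5.8}, which is exactly your reasoning (Corollary~\ref{cor:contractible} being nothing more than Corollary~\ref{C 4.17} specialized to a point).
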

\begin{proof}
This follows from Corollary \ref{cor:contractible} and
Corollary \ref{C 5.8}.
\end{proof}

		\section{Constructible de Rham Theorem}
		
		To establish the constructible de Rham theorem we follow the classical approach by induction on open sets, using a constructible version of the Mayer-Vietoris sequence (compare with \cite{bott_tu, madsen_tornehave}). This needs partition of unity. The argument uses partition of unity, see Proposition~\ref{partition-of-unity}. 
	
	Let $p\in \IN\cup\{\omega\}$ and let $X,Y$ be globally subanalytic $C^p$-manifolds.

	We consider the globally subanalytic singular homology groups  $H^{\sing,\sub}_\bullet(X,\IR)$ and the globally subanalytic singular cohomology groups  $H^\bullet_{\sing,\sub}(X):=\mathrm{Hom}(H_\bullet^{\sing,\sub}(X),\IR)$,  
	with respect to globally subanalytic $C^1$-simplices and coefficients in the reals. Note that as for example observed in \cite[{Corollary~5.3}]{huber_period_iso_tame}  the canonical maps 
$H^{\sing,\sub}_\bullet(X,\R)\to H_\bullet^\sing(X,\R)$ and $H_{\sing,\sub}^\bullet(X)\leftarrow H^\bullet_\sing(X,\R)$  to the classical singular homology and cohomology, respectively,  are isomorphisms as a consequence of the result of Paw\l ucki \cite{pawlucki-triang}.

	\begin{defn}\label{defn:de Rham theorem}

		We say that the \emph{constructible de Rham theorem} holds for $X$ if 
		$$\langle\cdot,\cdot\rangle:  H^\bullet_{\mathrm{dR},\mathrm{con}(q)}(X)\times H_\bullet^{\sing,\sub}(X,\R)\to \IR, \quad ([\omega],[\sigma])\mapsto \int_\sigma \omega,$$
		is a perfect pairing for every $0\leq q\leq p-1$.
\end{defn}
	
	Note that the above pairing is well-defined since Stokes's theorem holds for globally subanalytic $C^1$-simplices in the constructible setting by Remark \ref{Stokes} and Proposition \ref{Stokes +}. Note also that it is functorial for globally subanalytic $C^p$-morphisms by the change of variables formula and Remark~\ref{functoriality_easy} ($q>0$) and Theorem~\ref{thm:functorial} ($q=0$).
This means that for every globally subanalytic $C^p$-morphism $f:Y\to X$ and
 $\omega_X\in H^\bullet_{\dR,\con(q)}(X)$, $\sigma_Y\in H_\bullet^{\sing,\sub}(X,\R)$ we have
\[ \langle f^*\omega_X,\sigma_Y\rangle =\langle \omega_X,f_*\sigma_Y\rangle.\]

	\begin{rem}
		\label{R 6.2}
		Let $X$ and $Y$ be globally subanalytic $C^p$-homotopy equivalent. Then the constructible de Rham theorem holds for $X$  if and only if it holds for $Y$.
	\end{rem}
	
		Our main result will be that the constructible de Rham theorem holds in the case $p<\infty$ in full generality. We will use partition of unity.
	
		\begin{lemma}
		\label{L 6.3}
		Let $p<\infty$.
		Let $U_1,U_2\subset X$ be open globally subanalytic subsets. Assume that the constructible de Rham theorem holds for $U_1,U_2$ and $U_1\cap U_2$.
		Then it holds for $U_1\cup U_2$.
	\end{lemma}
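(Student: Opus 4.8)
The statement is the Mayer–Vietoris induction step: if the constructible de Rham theorem holds for open globally subanalytic $U_1$, $U_2$, and $U_1\cap U_2$, then it holds for $U_1\cup U_2$. The plan is to follow the classical argument (as in Bott–Tu): build a short exact sequence of constructible de Rham complexes, take the associated long exact cohomology sequence, compare it via the period pairing to the (dual of the) Mayer–Vietoris long exact sequence in globally subanalytic singular homology, and conclude by the five lemma. Throughout I fix a regularity $q$ with $0\le q\le p-1$.

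First I would set up the Mayer–Vietoris short exact sequence of complexes
\[
0 \longrightarrow \Omega^\bullet_{\con(q)}(U_1\cup U_2) \xrightarrow{\ (j_1^*,\, j_2^*)\ } \Omega^\bullet_{\con(q)}(U_1)\oplus \Omega^\bullet_{\con(q)}(U_2) \xrightarrow{\ i_1^* - i_2^*\ } \Omega^\bullet_{\con(q)}(U_1\cap U_2) \longrightarrow 0,
\]
where the $j_\nu$ and $i_\nu$ are the various open inclusions. Exactness on the left and in the middle is formal. Surjectivity on the right is the one place where tameness enters: given $\omega\in\Omega^k_{\con(q)}(U_1\cap U_2)$, pick a globally subanalytic partition of unity $\{f_1,f_2\}$ of regularity $p$ subordinate to the cover $U_1\cup U_2 = U_1\cup U_2$ restricted appropriately (this exists by Proposition~\ref{partition-of-unity}, since $p<\infty$ and $\R_\an$ satisfies its hypotheses). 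Then $f_2\omega$ extends by zero to a constructible form on $U_1$ and $-f_1\omega$ extends by zero to one on $U_2$, and their difference on $U_1\cap U_2$ is $\omega$. One must check these extensions land in $\Omega^\bullet_{\con(q)}$, not merely $\Eh^\bullet_{\con(q)}$: for $q>0$ this uses Remark~\ref{R 2.5}(2), that $\Omega^k_{\con(q)}$ is a module over $C^{q+1}_{\con}$; for $q=0$ one argues on a common $C^1$-zone that the Cartan derivative of $f_\nu\omega$ extends continuously, using that $f_\nu$ is $C^p$ with $p\ge 1$ and the Leibniz rule. Functoriality of $D$ (Remark~\ref{functoriality_easy}, Theorem~\ref{thm:functorial}) makes the maps chain maps.

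Next I take the long exact cohomology sequence of this short exact sequence, obtaining connecting maps and a long exact sequence relating $H^\bullet_{\dR,\con(q)}(U_1\cup U_2)$, $H^\bullet_{\dR,\con(q)}(U_1)\oplus H^\bullet_{\dR,\con(q)}(U_2)$ and $H^\bullet_{\dR,\con(q)}(U_1\cap U_2)$. On the homology side there is the classical Mayer–Vietoris long exact sequence for globally subanalytic singular homology with $\R$-coefficients (valid in the o-minimal singular theory; it transfers to the globally subanalytic $C^1$-simplex theory since that computes the same homology by Paw\l ucki's triangulation result, as noted in the excerpt), and dualizing over $\R$ — which is exact since we work with vector spaces — gives a long exact sequence of the cohomology groups $H^\bullet_{\sing,\sub}$. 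The period pairing $\langle\cdot,\cdot\rangle$ is functorial (change of variables, Remark~\ref{functoriality_easy}, Theorem~\ref{thm:functorial}) and so induces a morphism from the de Rham long exact sequence to the dualized singular one; commutativity of the squares not involving the connecting map is immediate from functoriality, and commutativity of the square with the connecting homomorphism is a diagram chase on the level of the snake lemma construction, matching the explicit cochain-level formula for the de Rham connecting map (built from the partition of unity) against the singular one via Stokes' theorem (Remark~\ref{Stokes}, Proposition~\ref{Stokes +}).

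Finally, by hypothesis the pairing is perfect for $U_1$, $U_2$, $U_1\cap U_2$, so the comparison morphism is an isomorphism on all terms of the two long exact sequences except those for $U_1\cup U_2$; by the five lemma it is an isomorphism there as well, i.e.\ the constructible de Rham theorem holds for $U_1\cup U_2$, for every $q$ in range. The main obstacle I anticipate is twofold: checking that the partition-of-unity construction keeps forms inside the subtle spaces $\Omega^\bullet_{\con(q)}(X)$ (especially in the delicate $q=0$ case, where one must verify the extended Cartan derivative stays continuous and constructible on a common $C^1$-zone), and verifying the compatibility of the two connecting homomorphisms with the pairing — essentially the cochain-level Stokes computation that underlies the classical argument, here requiring the $q=0$ version of Stokes from Proposition~\ref{Stokes +}.
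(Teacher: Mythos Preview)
Your proposal is correct and follows essentially the same route as the paper's proof: establish the Mayer--Vietoris short exact sequence of constructible de Rham complexes (with surjectivity via the globally subanalytic partition of unity from Proposition~\ref{partition-of-unity}, using Remark~\ref{R 2.5}(2) to stay in $\Omega^\bullet_{\con(q)}$), compare the resulting long exact sequence to the singular Mayer--Vietoris sequence through the period pairing, and conclude by the five lemma. The paper's write-up is terser on the connecting-homomorphism compatibility (it simply cites functoriality of the pairing and \cite[Proposition~8.1(d), Proposition~8.6]{dold} for the singular side), but the underlying argument is the same as yours.
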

	
	\begin{proof}
		Let $0\leq q \leq p-1$. We consider for $k\in \IN_0$ the Mayer-Vietoris sequence
		\begin{equation}\label{MV}
0\mapsto \Omega^k_{\mathrm{con}(q)}(U_1\cup U_2)\stackrel{\Phi}{\to}\Omega^k_{\mathrm{con}(q)}(U_1)\oplus\Omega^k_{\mathrm{con}(q)}(U_2)\stackrel{\Psi}{\to}\Omega^k_{\mathrm{con}(q)}(U_1\cap U_2)\to 0
\end{equation}
		where  
		\[ \Phi(\omega)=(\omega|_{U_1},\omega|_{U_2}),\quad \Psi(\omega_1,\omega_2)=\omega_2|_{U_1\cap U_2}-\omega_1|_{U_1\cap U_2}.\]
		This sequence is exact: Given $(\omega_1,\omega_2)\in\mathrm{ker}(\Psi)$, we get a unique differential from $\omega$ restricting to $\omega_1$ and $\omega_2$ on $U_1$ and $U_2$, respectively. It is constructible because
		it is piecewise constructible. The behaviour of the derivatives can be tested locally. Hence $\omega\in\Omega^k_{\con(q)}(U_1\cup U_2)$ as claimed. 
		
		Now consider $\eta\in\Omega^k_{\con(q)}(U_1\cap U_2)$.
		Choose according to Proposition \ref{partition-of-unity} globally subanalytic $C^p$-functions $f_1,f_2:U_1\cup U_2\to \IR_{\geq 0}$ with $f_1+f_2=1$ and $\mathrm{supp}(f_l)\subset U_l$ for $l\in \{1,2\}$. Let $V_l\subset U_1\cup U_2$ be the complement of the support of $f_l$. We put
		\[ \eta_2=\begin{cases} f_1\eta &\text{on $U_1\cap U_2$}\\
			0 & \text{on $U_2-U_1\subset V_1$}
		\end{cases}
		\]
		on $U_2$
		and analogously $\eta_1$ on $U_1$. Then $(-\eta_1,\eta_2)\in\Omega^k_{\con(q)}(X)$ and
		\[ \Psi(-\eta_1,\eta_2)=f_1\eta-(-f_2\eta)=\eta.\]
		

Denote by $S^\bullet(X)$ the complex defining $H^\bullet_{\sing,\sub}(X,\R)$. By functoriality of the pairing in Definition~\ref{defn:de Rham theorem}, we get a morphism from the short exact sequence of complexes in \eqref{MV} to the (exact up to chain homotopy equivalence) sequence, see \cite[Proposition~8.1 (d) and Proposition~8.6]{dold}
\[ 0\to S^\bullet(U_1\cup U_2)\to S^\bullet(U_1)\oplus S^\bullet(U_2)\to S^\bullet(U_1\cap U_2)\to 0.\]
This induces a map between the long exact Mayer-Vietoris sequences for $H^\bullet_{\dR,\con(q)}(\cdot)$ and $H^\bullet_{\sing,\sub}(\cdot,\R)$. The de Rham theorem for $U_1\cup U_2$ follows by the 5-lemma from the de Rham theorem for
$U_1$, $U_2$ and $U_1\cap U_2$. 
	\end{proof}

	\begin{theorem}[Main Theorem]
	\label{T 6.4}
	Assume that $p<\infty$. 
	The constructible de Rham theorem holds for any globally subanalytic $C^p$-manifold. 
\end{theorem}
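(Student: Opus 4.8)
The plan is to run the classical proof of the de Rham theorem via a good cover and Mayer--Vietoris (compare \cite{bott_tu}): the gluing step is Lemma~\ref{L 6.3}, while the geometry of ribbons from the previous section supplies the good cover.

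\emph{Step 1: reduction to open globally subanalytic subsets of $\IR^n$.} A globally subanalytic $C^p$-manifold $X$ of dimension $n$ carries, by definition, a \emph{finite} atlas $\{(U_i,V_i,\phi_i)\mid 1\le i\le r\}$; composing each chart with a Nash diffeomorphism $\IR^n\xrightarrow{\sim}(-1,1)^n$, we may assume each $V_i$ is a bounded open globally subanalytic subset of $\IR^n$. For $\emptyset\neq J\subseteq\{1,\dots,r\}$ the intersection $U_J:=\bigcap_{i\in J}U_i$ is, via any chart occurring in $J$, globally subanalytically $C^p$-diffeomorphic to a bounded open globally subanalytic subset of $\IR^n$; hence, granting that the constructible de Rham theorem holds for all such subsets, it holds for every $U_J$ by Remark~\ref{R 6.2}. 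An induction on $r$ then completes the reduction: writing $X=(U_1\cup\dots\cup U_{r-1})\cup U_r$ and noting that $(U_1\cup\dots\cup U_{r-1})\cap U_r=\bigcup_{i<r}(U_i\cap U_r)$ is again a union of $r-1$ sets, each of which --- together with all of its finite intersections --- is of the form $U_J$ just treated, Lemma~\ref{L 6.3} applies.

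\emph{Step 2: the case $\Omega\subseteq\IR^n$, by induction on $n$.} The base case $n=0$ is trivial; alternatively one may anchor at $n=1$, where $\Omega$ is a finite disjoint union of open intervals and the Poincar\'e Lemma (Corollary~\ref{Poincare}), matching both sides of the pairing with those of a point, gives the claim at once. For the inductive step the key geometric input is that \emph{every bounded open globally subanalytic $\Omega\subseteq\IR^n$ is a finite union of ribbons}, say $\Omega=R_1\cup\dots\cup R_m$. Granting this, Lemma~\ref{L 5.6} shows that every nonempty finite intersection $R_{j_1}\cap\dots\cap R_{j_\ell}$ is again a ribbon, and Proposition~\ref{P 5.7} shows that every ribbon $R\subseteq\IR^n$ is globally subanalytically $C^p$-homotopy equivalent to its base, a bounded open globally subanalytic subset of $\IR^{n-1}$. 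Hence, by the inductive hypothesis applied in $\IR^{n-1}$ and Remark~\ref{R 6.2}, the constructible de Rham theorem holds for each $R_j$ and for each of their finite intersections. A second Mayer--Vietoris induction, now on $m$ --- applying Lemma~\ref{L 6.3} to $\Omega=(R_1\cup\dots\cup R_{m-1})\cup R_m$ with $(R_1\cup\dots\cup R_{m-1})\cap R_m=\bigcup_{j<m}(R_j\cap R_m)$ a union of $m-1$ ribbons whose finite intersections are again ribbons --- yields the constructible de Rham theorem for $\Omega$. This closes the induction on $n$ and, with Step~1, proves the theorem.

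\emph{Expected main obstacle.} The crux is the covering statement in Step~2. A single cell decomposition of $\IR^n$ adapted to $\Omega$ produces open cells --- which are ribbons by Example~\ref{E 5.5} --- covering only a dense open part of $\Omega$, the complement being globally subanalytic of dimension $<n$ but in general not itself a finite union of ribbons. One therefore has to cover this lower-dimensional remainder by finitely many further ribbons, presumably by combining cell decompositions taken in several generic coordinate systems and then covering a remaining finite exceptional set by small balls (which are again ribbons); keeping the total number of pieces finite is precisely where the tameness of the globally subanalytic structure enters. The rest is bookkeeping: one checks that the nested Mayer--Vietoris inductions are organized so that at every stage all relevant finite intersections are of the already-treated type, and verifies that every differential form and every homotopy produced along the way stays constructible and of class $C^p$, which is ensured by the results recalled in the preceding sections, in particular the partition of unity of Proposition~\ref{partition-of-unity}.
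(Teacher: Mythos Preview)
Your overall strategy coincides with the paper's: reduce to open globally subanalytic subsets of $\R^n$, induct on $n$ by writing such a set as a finite union of ribbons, pass each ribbon to its base in $\R^{n-1}$ via Proposition~\ref{P 5.7} and Remark~\ref{R 6.2}, and glue by Mayer--Vietoris (Lemma~\ref{L 6.3}) with Lemma~\ref{L 5.6} controlling the intersections. The induction bookkeeping you describe is exactly what the paper does.

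Your ``Expected main obstacle'' is, however, a non-obstacle, and this is the one substantive point you are missing. The covering statement you need --- that every bounded definable open subset of $\R^n$ is a finite union of open $C^0$-cells --- is precisely Wilkie's result \cite{wilkie_open}, which the paper simply cites. Open $C^0$-cells are ribbons by Example~\ref{E 5.5}, and that is the entire decomposition; no auxiliary coordinate systems, no covering of a lower-dimensional residue, no small balls. Your worry stems from thinking of a cell \emph{decomposition} (a partition, whose open cells cover only a dense open part); Wilkie's theorem instead furnishes an overlapping \emph{cover} of the open set by finitely many open cells.

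A minor simplification for Step~1: rather than running a second nested induction on the $r-1$ pieces of $(U_1\cup\dots\cup U_{r-1})\cap U_r$, just observe that this intersection is an open subset of $U_r$ and hence, via $\phi_r$, a globally subanalytic open subset of $\R^n$; once Step~2 is established in full generality, it applies directly. The paper organizes the argument this way.
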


\begin{proof}
	Let $X$ be a globally subanalytic $C^p$-manifold.
	
	We first consider the case that $X$ is a globally subanalytic open subset of $\R^n$ with the induced manifold structure. We argue by induction on $n$. The case
	$n=0$ is trivial. 
	
	Now assume that the assertion holds for globally subananlytic open subsets of $\R^{n-1}$. 
	By considering the globally subanalytic $C^\omega$-isomorphism
	$$\IR^n\to (-1,1)^n, x=(x_1,\ldots,x_n)\mapsto \big(x_1/\sqrt{1+x_1^2},\ldots,x_n/\sqrt{1+x_n^2}\big),$$
	we can assume that $X$ is bounded.
	By Wilkie \cite{wilkie_open} we have that $X$ is a finite union of globally subanalytic $C^0$-cells $V_1,\dots,V_s$. We view them as ribbons and argue by induction on the number of ribbons. The statement is empty for $s=0$. The ribbon
	$V_s$ is by Proposition \ref{P 5.7} globally $C^p$-homotopy equivalent to an open globally subanalytic set in $\R^{n-1}$. By inductive hypothesis (with respect to $n$) and Remark \ref{R 6.2}, the constructible de Rham theorem holds for $V_s$. By inductive hypothesis (with respect to $s$), the construtcible de Rham theorem holds for
	$U=V_1\cup\dots \cup V_{s-1}$. Moreover, 
	\[ U\cap V_s=(V_1\cap V_s)\cup \dots \cup (V_{s-1}\cap V_s)\]
	is itself a union of $s-1$ ribbons by Lemma~\ref{L 5.6}. By the Mayer-Vietoris property, see Lemma~\ref{L 6.3} the constructible de Rham theorem holds for $X=U\cup V_s$. 
	This settles the case of open subsets of $\R^n$.
	
	For the general case let $X$ be a globally subanalytic $C^p$-manifold with finite atlas
	$(\phi_i:U_i\to W_i)_{1\leq i\leq r}$. Note that by above the constructible de Rham theorem holds for $W_i$ and the diffeomorphic $U_i$. We argue by induction on $r$. Let $U=U_1\cup\dots\cup U_{r-1}$. Note that $U\cap U_r \subset U_r$
	is itself diffeomorphic to a globally subanalytic open subset of $\R^n$, hence the constructible de Rham theorem holds for $U\cap U_r$ by the first case. Applying the inductive hypothesis to $U$, we deduce the constructible de Rham theorem for $X=U\cup U_r$ by
	Lemma~\ref{L 6.3}.
\end{proof}

		\begin{cor}
		Let $p<\infty$ and let $0\leq q\leq p-1$.
		We have that $\dim (H^k_{\mathrm{dR},\mathrm{con}(q)}(X))<\infty$ for all $k\in \IN_0$.
	\end{cor}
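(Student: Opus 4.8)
The strategy is to reduce the finiteness statement to a known finiteness for o\nobreakdash-minimal singular (co)homology by means of the de Rham theorem just established. By \autoref{T 6.4} the pairing of \autoref{defn:de Rham theorem} is perfect, so for every $k\in\IN_0$ and every admissible $q$ the induced map
\[ H^k_{\mathrm{dR},\con(q)}(X)\;\longrightarrow\;\mathrm{Hom}\bigl(H_k^{\sing,\sub}(X,\R),\R\bigr)=H^k_{\sing,\sub}(X) \]
is an isomorphism of $\R$-vector spaces. Hence it suffices to show that $H_k^{\sing,\sub}(X,\R)$ is finite-dimensional for all $k$; given this, $H^k_{\mathrm{dR},\con(q)}(X)$ is the dual of a finite-dimensional space and therefore finite-dimensional as well.

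To see the finiteness of the homology side I would argue as follows. Since here $p\geq 1$, the manifold $X$ is definably normal and hence affine (see the Remark following \autoref{partition-of-unity}), so we may regard $X$ as a globally subanalytic, in particular definable, subset of some $\R^N$. By the triangulation theorem for definable sets $X$ is definably homeomorphic to the realization of a finite simplicial complex, and in particular is homotopy equivalent to a finite CW complex. By the comparison isomorphism $H_k^{\sing,\sub}(X,\R)\cong H_k^{\sing}(X,\R)$ recalled before \autoref{defn:de Rham theorem}, the group $H_k^{\sing,\sub}(X,\R)$ is thus the ordinary real singular homology of a finite CW complex, which is finite-dimensional. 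This completes the argument.

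There is no genuine obstacle once \autoref{T 6.4} is available: the only mild points are the (harmless, given finiteness) passage from a perfect pairing to an isomorphism onto the dual, and the reduction to an affine $X$ so that the standard triangulation results apply. As an alternative that stays entirely within this paper, one can replace the appeal to triangulation by the same double induction used to prove \autoref{T 6.4}: the Poincar\'e Lemma (\autoref{Poincare}) provides finite-dimensionality on open globally subanalytic cells and ribbons, and each term of the long exact Mayer--Vietoris sequence attached to \autoref{L 6.3} preserves finite-dimensionality, so the induction on the dimension $n$ and on the number of cells (resp.\ charts) in the proof of \autoref{T 6.4} carries finiteness along at every stage.
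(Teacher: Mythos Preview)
Your proposal is correct and follows essentially the same approach as the paper: use \autoref{T 6.4} to identify $H^k_{\mathrm{dR},\con(q)}(X)$ with singular cohomology, and then invoke finite triangulability of definable sets to conclude finiteness of the Betti numbers. You supply a bit more detail than the paper (the reduction to affine $X$ and the comparison with ordinary singular homology), and your alternative Mayer--Vietoris induction is a nice self-contained variant, but there is no substantive difference.
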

	
	\begin{proof}
		We have by Theorem \ref{T 6.4} that $\dim (H^k_{\mathrm{dR},\mathrm{con}(q)}(X))=b_k(X)$ where $b_k$ denotes the $k$-th Betti number of $X$. These are finite (since sets definable in o-minimal structure can be finitely triangulated). 
	\end{proof}
	
	Given a globally subanalytic $C^\omega$-manifold $X$
	we have the classical de Rham cohomology groups $H_\mathrm{dR}^\bullet(X)$ stemming from $C^\infty$-differential forms and the real analytic de Rham cohomology groups $H_{\mathrm{dR},\omega}^\bullet(X)$.
	Note that by Beretta \cite{B} the canonical imbedding gives an isommorphism $H_{\mathrm{dR},\omega}^\bullet(X)\stackrel{\sim}{\to} H_\mathrm{dR}^\bullet(X)$.

	\begin{cor}
		Assume that $X$ is a globally subanalytic $C^\omega$-manifold.
Let $0\leq q<\infty$.
		We have a natural isomorphism $H^\bullet_{\mathrm{dR},\mathrm{con}(q)}(X)\stackrel{\sim}{\to}H^\bullet_\mathrm{dR}(X)$.
	\end{cor}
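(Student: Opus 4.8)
The plan is to deduce this from the Main Theorem \autoref{T 6.4} and the classical de Rham theorem, with singular cohomology serving as the bridge. First I would note that although $X$ is given with a $C^\omega$-structure, it is in particular a globally subanalytic $C^p$-manifold for every finite $p$, and that for finite $q$ the constructible de Rham complex $\Omega^\bullet_{\mathrm{con}(q)}(X)$ is unchanged if we replace the $C^\omega$-structure by the underlying $C^{q+1}$-structure: the defining conditions (constructibility of the coefficient functions in globally subanalytic charts, regularity $C^q$, and existence of a $C^1$-zone) are intrinsic and invariant under change of globally subanalytic variables. Fixing $p=q+1<\infty$, \autoref{T 6.4} applies and says that the period pairing of \autoref{defn:de Rham theorem} is perfect; by definition of $H^\bullet_{\mathrm{sing},\mathrm{sub}}(X)=\mathrm{Hom}(H^{\mathrm{sing},\mathrm{sub}}_\bullet(X,\R),\R)$ this gives, for every $k$, an isomorphism
$$
H^k_{\mathrm{dR},\mathrm{con}(q)}(X)\ \xrightarrow{\ \sim\ }\ H^k_{\mathrm{sing},\mathrm{sub}}(X),\qquad [\omega]\longmapsto\big([\sigma]\mapsto \textstyle\int_\sigma\omega\big).
$$

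Next I would invoke the comparison statement recalled before \autoref{defn:de Rham theorem}: by Paw\l ucki's triangulation theorem \cite{pawlucki-triang} (used in \cite{huber_period_iso_tame}) the canonical map $H^\bullet_{\mathrm{sing}}(X,\R)\to H^\bullet_{\mathrm{sing},\mathrm{sub}}(X)$ is an isomorphism. On the other hand, the classical de Rham theorem identifies $H^\bullet_{\mathrm{dR}}(X)$ (the cohomology of the $C^\infty$-de Rham complex) with $H^\bullet_{\mathrm{sing}}(X,\R)$ via integration of smooth forms over $C^1$-simplices --- which is legitimate since a $C^\infty$-form integrates over a $C^1$-simplex and smooth singular cohomology agrees with continuous singular cohomology. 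Composing the three isomorphisms yields a canonical isomorphism $H^\bullet_{\mathrm{dR},\mathrm{con}(q)}(X)\xrightarrow{\sim}H^\bullet_{\mathrm{dR}}(X)$, characterised by the property that a constructible class and its image have equal periods $\int_\sigma(\cdot)$ over all globally subanalytic $C^1$-simplices $\sigma$; since these generate $H^{\mathrm{sing},\mathrm{sub}}_\bullet(X,\R)$, this pins the map down uniquely and, in particular, shows it is independent of the auxiliary choice $p=q+1$.

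It remains to record naturality. For a globally subanalytic $C^\omega$-map $f\colon Y\to X$, compatibility of pull-back with the constructible period pairing is \autoref{functoriality_easy} (for $q>0$) and \autoref{thm:functorial} (for $q=0$), while on the classical side it is the change-of-variables formula $\int_{f_*\sigma}\omega=\int_\sigma f^*\omega$; as the comparison isomorphisms are themselves natural, so is the composite. I do not expect a genuine obstacle here: all the substance is already in \autoref{T 6.4}, and what is left is a formal diagram chase through isomorphisms that are all given by integration. If one prefers to land in the real-analytic de Rham cohomology, one composes in addition with Beretta's isomorphism $H^\bullet_{\mathrm{dR},\omega}(X)\xrightarrow{\sim}H^\bullet_{\mathrm{dR}}(X)$ from \cite{B}.
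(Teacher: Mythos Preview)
Your proposal is correct and follows essentially the same route as the paper: both compose the period-pairing isomorphism from \autoref{T 6.4} with the comparison $H^\bullet_{\mathrm{sing}}(X,\R)\cong H^\bullet_{\mathrm{sing},\mathrm{sub}}(X)$ and the classical de Rham isomorphism. Your explicit remark that one must view the $C^\omega$-manifold as a $C^{q+1}$-manifold in order to invoke \autoref{T 6.4} (which requires $p<\infty$) is a useful clarification that the paper's proof leaves implicit.
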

	
	\begin{proof}
		Let $k\in \IN_0$.
		As mentioned above the canonical map
		$\alpha:H_k^{\sing,\sub}(X,\R)\to H_k^\sing(X,\R)$ is an isomorphism, which gives an isomorphism 
		$$\beta:H^k_\sing(X,\R)\to H^k_{\sing,\sub}(X), f\mapsto f\circ \alpha.$$
		By Theorem \ref{T 6.4} the map
		$$\gamma: H^k_{dR,\mathrm{con}(q)}(X) \to H^k_{\sing,sub}(X), [\omega]\mapsto ([\sigma]\mapsto \int_\sigma \omega)$$
is an isomorphism.
		By the classical de Rham theorem the map
		$$\delta:H^k_\mathrm{dR}(X)\to H^k_\sing(X,\R), [\omega]\mapsto ([\sigma]\mapsto \int_\sigma \omega)$$
is an isomorphism.
		(Note that every ordinary singular cohomology class has a $C^1$-re\-pre\-sen\-ta\-tive.)
		Then
		$$\varphi:=\delta^{-1}\circ\beta^{-1}\circ \gamma: H^k_{\mathrm{dR},\mathrm{con}(q)}(X)\to H^k_\mathrm{dR}(X)$$
		is an isomorphism.
	\end{proof}
	
	In the compact case, we can formulate also the case $p=\omega$. Here even the globally subanalytic de Rham theorem (defined analogously) holds. 
	
	\begin{rem}
	Let $X$ be a globally subanalytic $C^\omega$-manifold that is compact.
	The following holds:
	\begin{itemize}
		\item[(1)] The constructible de Rham theorem holds for $X$. In particular, we have a natural isomorphism $H_{\mathrm{dR},\mathrm{con}(\omega)}^\bullet(X)\stackrel{\sim}{\to} H_\mathrm{dR}^\bullet(X)$.
		\item[(2)] The globally subanalytic de Rham theorem holds for $X$. In particular, we have a natural isomorphism $H_{\mathrm{dR},\mathrm{sub}(\omega)}^\bullet(X)\stackrel{\sim}{\to} H_\mathrm{dR}^\bullet(X)$.
	\end{itemize}
	\end{rem}

	\begin{proof}
	This follows from the above mentioned result of \cite{B} and the fact that a real analytic function on a compact globally subanalytic $C^\omega$-manifold is globally subanalytic.
	\end{proof}
	
	\begin{rem}

		Note that the second statement of the previous remark does not hold in the non-compact case (see Example \ref{not computing}).
		It remains open whether the first statement holds in the non-compact case.

	\end{rem}

\section{Sheaf-theoretic approach}

In this section, we give another proof of the de Rham theorem in the constructible setting from the point of view of sheaves on the definable site.

We begin with some generalities on the cohomology of sheaves on the definable site of a definable manifold. 
We recall that $\mathcal{M}$ is an arbitrary o-minimal expansion of the ordered field of real numbers. We shall only later specialize to the case that $\mathcal{M} = \R_\an$.

\emph{Throughout this section, we let $(X,[\mathcal{A}])$ denote a definable $C^p$-manifold of dimension $n$.}

\subsection{Sheaves on the definable site}

We follow the approach of Edmundo, Jones and Peatfield in \cite{edmundo-jones-peatfield}.  
Recall that we denote by $\mathrm{Def}(X)$ the (finitary) Boolean algebra of definable subsets of $X$.

\begin{defn}[The definable site $X_\mathrm{def}$]
    The \emph{definable site $X_\mathrm{def}$ of $X$} is the Grothendieck topology on $X$ with the admissible open subsets being the definable open subsets $U$ of $X$, and admissible coverings of a definable open $U$ are defined to be the \emph{finite} coverings by definable open subsets. It is not hard to verify that $X_\mathrm{def}$ indeed satisfies the axioms of being a Grothendieck topology.
\end{defn}

\begin{notation}
    The category of sheaves of abelian groups on $X_\mathrm{def}$ will be denoted by $Sh(X_\mathrm{def})$. For a sheaf of abelian groups $\mathcal{F}$ on $X_\mathrm{def}$, we denote by $H^i(X_\mathrm{def},\mathcal{F})$ the $i^\mathrm{th}$ right-derived functor of the left-exact global sections functor \begin{align*}
       \Gamma(X, \cdot) :  Sh(X_\mathrm{def}) &\rightarrow \IZ\text{-mod} \\
       \mathcal{F} &\mapsto \mathcal{F}(X).
    \end{align*}
\end{notation}

\begin{rem}[The definable spectrum \`a la \cite{edmundo-jones-peatfield}.]
    We recall the construction in \cite{edmundo-jones-peatfield} of the definable spectrum $\widetilde{X}$ of a definable $C^p$-manifold $X$. 
    
    The underlying set of the definable spectrum is defined as follows: \[\widetilde{X} := \{\mathfrak{q} \subset \mathrm{Def}(X) : \mathfrak{q} \text{ is an ultrafilter on } \mathrm{Def}(X)\}.\]  
    For a definable subset $W \subset X$, we denote by $\widetilde{W} \subset \widetilde{X},$ the subset $\widetilde{W} := \{\mathfrak{q} \in \widetilde{X} : W \in \mathfrak{q}\}.$  
    One may verify that for a finite collection $\{W_i : 1\leq i \leq k\}$ of definable subsets of $X$, $\widetilde{(\bigcup_{i=1}^k W_i)} = \bigcup_{i=1}^k \widetilde{W_i}$  and that $\widetilde{(\cap_{i=1}^k W_i)} = \cap_{i=1}^k \widetilde{W_i}$. 

    The sets of the form $\widetilde{U}\subset \widetilde{X}$ for definable opens $U \subset X$, form a basis for a topology on $\widetilde{X}$. Under this topology $\widetilde{X}$ becomes a spectral topological space, with the sets of the form $\widetilde{U}$ for open definable subsets $U \subset X$ forming a basis of quasi-compact open subsets of $\widetilde{X},$ stable under finite intersections. 

    Every sheaf of abelian groups $\mathcal{F}$ on $X_\mathrm{def}$ extends uniquely to a sheaf of abelian groups $\widetilde{\mathcal{F}}$ on $\widetilde{X}$, such that for a definable open $U \subset X$, $\widetilde{\mathcal{F}}(\widetilde{U}) = \mathcal{F}(U).$ In fact, the association $\mathcal{F}\mapsto \widetilde{\mathcal{F}}$ is functorial in $\mathcal{F}$ and establishes an equivalence of categories between the category of sheaves of abelian groups on $X_\mathrm{def}$ and the category of sheaves of abelian groups on the topological space $\widetilde{X}.$ We therefore see that $H^i(X_\mathrm{def},\mathcal{F})$ is canonically isomorphic to the sheaf cohomology $H^i(\widetilde{X},\widetilde{\mathcal{F}})$ computed on the spectral topological space $\widetilde{X}.$
\end{rem}

\begin{lemma}\label{lem:def.flasque-acyclic}
    Let $\mathcal{F} \in Sh(X_\mathrm{def})$ be an abelian sheaf that is flasque on the definable site (that is for every inclusion of definable open subsets $U'\subset U$, the restriction map $\mathcal{F}(U)\rightarrow \mathcal{F}(U')$ is surjective). Then $\mathcal{F}$ is acyclic for the global sections functor, that is for every $k \geq 1$, $H^k(X_\mathrm{def},\mathcal{F}) = 0$. 
\end{lemma}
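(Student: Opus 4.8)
The statement to prove is that a sheaf $\mathcal{F}$ on $X_\mathrm{def}$ which is flasque on the definable site is acyclic for $\Gamma(X,\cdot)$. The plan is to transport the problem to the spectral topological space $\widetilde{X}$ via the equivalence of categories $\mathcal{F}\mapsto\widetilde{\mathcal{F}}$ recalled above, which identifies $H^i(X_\mathrm{def},\mathcal{F})$ with $H^i(\widetilde{X},\widetilde{\mathcal{F}})$. On an honest topological space the classical fact is that flasque sheaves are acyclic, so the whole content is to check that definable-flasqueness of $\mathcal{F}$ implies flasqueness of $\widetilde{\mathcal{F}}$ as a sheaf on $\widetilde{X}$ — and this is where the one subtlety lies, since $\widetilde{X}$ has many open sets not of the form $\widetilde{U}$ for $U$ definable open.

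First I would record that $\widetilde{\mathcal{F}}$ is defined on the basis $\{\widetilde{U}: U\subset X \text{ definable open}\}$ of quasi-compact opens by $\widetilde{\mathcal{F}}(\widetilde{U})=\mathcal{F}(U)$, and that by hypothesis all restriction maps $\widetilde{\mathcal{F}}(\widetilde{U})\to\widetilde{\mathcal{F}}(\widetilde{U'})$ for $\widetilde{U'}\subset\widetilde{U}$ basic opens are surjective (using that $\widetilde{U'}\subset\widetilde{U}$ forces $U'\subset U$ up to the identification, or more carefully that the relevant restriction factors through $\mathcal{F}(U)\to\mathcal{F}(U\cap U')$). Next I would upgrade this to: for an \emph{arbitrary} open $\Omega\subset\widetilde{X}$ and an arbitrary basic open $\widetilde{U}\subset\Omega$, the restriction $\widetilde{\mathcal{F}}(\Omega)\to\widetilde{\mathcal{F}}(\widetilde{U})$ is surjective. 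The argument is the standard Zorn's-lemma extension argument for flasque sheaves: given a section over $\widetilde{U}$, consider the poset of pairs $(\widetilde{V},\text{extension over }\widetilde{V})$ with $\widetilde{U}\subset\widetilde{V}\subset\Omega$, $\widetilde{V}$ open; a maximal element must be all of $\Omega$, because if $\widetilde{V}\subsetneq\Omega$ one finds a basic $\widetilde{W}\subset\Omega$ not contained in $\widetilde{V}$, extends the section across $\widetilde{W}$ using surjectivity on basic opens together with the sheaf gluing axiom on $\widetilde{V}\cup\widetilde{W}$ (the two sections agree on $\widetilde{V}\cap\widetilde{W}$ after possibly correcting, using surjectivity of restriction to that intersection), contradicting maximality. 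Here one uses crucially that the basic opens $\widetilde{U}$ form a basis stable under finite intersection, so $\widetilde{V}\cap\widetilde{W}$ is again a union of basic opens. This shows $\widetilde{\mathcal{F}}$ is flasque on $\widetilde{X}$.

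Finally I would invoke the classical result that a flasque sheaf of abelian groups on any topological space is acyclic for the global sections functor (e.g. via dimension shifting: embed $\widetilde{\mathcal{F}}$ into an injective/flasque sheaf, note the quotient is again flasque since a quotient of flasque sheaves in a short exact sequence is flasque, and induct), giving $H^k(\widetilde{X},\widetilde{\mathcal{F}})=0$ for all $k\geq 1$, hence $H^k(X_\mathrm{def},\mathcal{F})=0$. The main obstacle is purely the middle step — making sure the extension-by-Zorn argument genuinely goes through on the spectral space $\widetilde{X}$ whose topology is generated by, but not equal to, the definable opens; the key geometric input that saves the day is the stability of $\{\widetilde{U}\}$ under finite intersections and the fact that they form a basis, both of which were recorded in the construction of $\widetilde{X}$. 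Everything else is standard homological algebra.
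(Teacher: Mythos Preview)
Your plan has a genuine gap: the claim that $\widetilde{\mathcal{F}}$ is flasque on all of $\widetilde{X}$ is in general \emph{false}, so the strategy of reducing to the classical ``flasque $\Rightarrow$ acyclic'' theorem on $\widetilde{X}$ cannot work as stated. The Zorn argument you sketch is moreover circular: at the extension step you need the restriction $\widetilde{\mathcal{F}}(\widetilde{W}) \to \widetilde{\mathcal{F}}(V \cap \widetilde{W})$ to be surjective, where $\widetilde{W}$ is basic but $V$ (your maximal element) is an arbitrary open; since $V \cap \widetilde{W}$ need not be basic, this surjectivity is not among your hypotheses and is of the same shape as what you are proving. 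To see that the underlying claim really fails, take the spectral space $Y = \mathbb{N} \cup \{\omega\}$ (the one-point compactification of a countable discrete set) with the constant sheaf $\underline{\mathbb{Z}}$. The quasi-compact opens are the finite subsets of $\mathbb{N}$ together with the (automatically cofinite) opens containing $\omega$; they are closed under finite union and intersection, and one checks directly that $\underline{\mathbb{Z}}$ is flasque on them. But $\mathbb{N} \subset Y$ is open and not quasi-compact, and the restriction $\underline{\mathbb{Z}}(Y) \to \underline{\mathbb{Z}}(\mathbb{N}) = \prod_{n} \mathbb{Z}$ has image only the eventually-constant sequences, hence is not surjective.

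The paper's argument avoids this by running the dimension-shifting proof directly on the site, never invoking flasqueness over non-basic opens: embed $\widetilde{\mathcal{F}} \hookrightarrow \widetilde{\mathcal{I}}$ with $\widetilde{\mathcal{I}}$ flasque on $\widetilde{X}$ and cokernel $\widetilde{\mathcal{G}}$, and use definable-flasqueness of $\mathcal{F}$ together with the fact that admissible covers on $X_\mathrm{def}$ are \emph{finite} to show that $\mathcal{I}(U) \to \mathcal{G}(U)$ is onto for every definable open $U$ (induct on the number of opens in a cover over which a local lift exists, correcting by a section of $\mathcal{F}$ at each step). This gives $H^1(X_\mathrm{def}, \mathcal{F}) = 0$ and shows $\mathcal{G}$ is again definably flasque, so one iterates. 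The finiteness of covers --- equivalently, the quasi-compactness of each $\widetilde{U}$ --- is the decisive input; full flasqueness on $\widetilde{X}$ is neither needed nor available.
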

\begin{proof}
	
	The argument for topological spaces works with little change. We also refer the reader to \cite[Exp. V, \S 4.8 and Ex. 4.16]{SGA4}.

\end{proof}

\subsection{Comparing definable sheaf with singular cohomology}

For an abelian group $G$, we shall denote by $\underline{G}_{X_\mathrm{def}}$ the constant sheaf induced by $G$ on the definable site, $\underline{G}_X$ the constant sheaf on the topological space $X$, and finally $\widetilde{\underline{G}}$ the constant sheaf induced by $G$ on $\widetilde{X}.$ 

We have morphisms of sites\footnote{Note the direction of the arrows. We follow \cite[\href{https://stacks.math.columbia.edu/tag/00X1}{Tag 00X1}]{stacks-project} for our conventions on morphisms of sites.} $i : X_\mathrm{top} \rightarrow X_\mathrm{def}$, and $j : \widetilde{X}_\mathrm{top} \rightarrow X_\mathrm{def}.$ 
Note that by $X_\mathrm{top}$ (respectively by $\widetilde{X}_\mathrm{top}$) here we mean the site of open subsets of $X$ ($\widetilde{X}$ respectively) with covers being arbitrary open covers.
It is not hard to see that $i_\ast\underline{G}_X = \underline{G}_{X_\mathrm{def}}$ \footnote{essentially since connected components of a definable open subset are again definable and since a definable open subset is connected iff it is definably connected}, and $j^{-1}(\underline{G}_{X_\mathrm{def}}) = \widetilde{\underline{G}_{X_\mathrm{def}}} = \underline{\widetilde{G}}.$

In the following we shall denote by $H^k_\mathrm{sing}(X,G)$, the $k^\mathrm{th}$-singular cohomology group of $X$ with coefficients in $G$, and by $H^k_\mathrm{sing, def}(X,G)$ the $k^\mathrm{th}$ o-minimal singular cohomology group of the definable manifold $X$ with coefficients in $G$ (see \cite[\S 5, 6]{edmundo_woerheide_comparison} and \cite{huber_period_iso_tame}.) Recall that $H^k_\mathrm{sing,def}(X,G)$ is computed using the complex of $G$-valued functions on the free abelian group of continuous definable simplices in $X$.

\begin{prop}
    For each $k \geq 0$, we have natural isomorphisms \[H^k(X_\mathrm{def},\underline{G}_{X_\mathrm{def}})\cong H^k_\mathrm{sing}(X,G) \cong H^k(X,\underline{G}_X) \cong H^k_\mathrm{sing,def}(X,G).\]    
\end{prop}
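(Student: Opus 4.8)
The plan is to establish the chain of isomorphisms
\[
H^k(X_\mathrm{def},\underline{G}_{X_\mathrm{def}})\cong H^k(\widetilde{X},\underline{\widetilde{G}})\cong H^k(X,\underline{G}_X)\cong H^k_\mathrm{sing}(X,G)\cong H^k_\mathrm{sing,def}(X,G),
\]
reading the fourth isomorphism off from known results and proving the others. The first isomorphism is already recorded in the excerpt: the equivalence of categories $\mathcal{F}\mapsto\widetilde{\mathcal{F}}$ between $Sh(X_\mathrm{def})$ and $Sh(\widetilde{X})$ is exact, carries $\underline{G}_{X_\mathrm{def}}$ to $\underline{\widetilde{G}}$, and identifies the global sections functors, hence identifies all derived functors. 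The last isomorphism $H^k_\mathrm{sing}(X,G)\cong H^k_\mathrm{sing,def}(X,G)$ is the comparison theorem of Edmundo--Woerheide \cite{edmundo_woerheide_comparison} (combined, if one wants the $C^1$-definable version used elsewhere in the paper, with triangulation results of Pawłucki \cite{pawlucki-triang}); I would simply cite it. So the mathematical content reduces to the middle two isomorphisms, both of which concern the spectral space $\widetilde{X}$.

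For $H^k(\widetilde{X},\underline{\widetilde{G}})\cong H^k(X,\underline{G}_X)$ I would use the morphism of sites $j:\widetilde{X}_\mathrm{top}\to X_\mathrm{def}$ together with $i:X_\mathrm{top}\to X_\mathrm{def}$. One has $i_\ast\underline{G}_X=\underline{G}_{X_\mathrm{def}}$ and $j^{-1}\underline{G}_{X_\mathrm{def}}=\underline{\widetilde{G}}$ as noted in the text. The cleanest route is to compare both sides with Čech cohomology: since $X$ is a definable manifold it is definably normal (Remark~\ref{rem:definably-normal}), and one shows that every finite definable open cover of a definable open set admits a refinement for which the Čech-to-derived-functor spectral sequence degenerates — equivalently, the basis of sets $\widetilde{U}$ (quasi-compact opens of $\widetilde{X}$, stable under finite intersection) computes cohomology via Čech complexes of finite covers, by Cartan's criterion applied on the spectral space. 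Because $\widetilde{(\cdot)}$ takes finite unions and intersections of definables to the corresponding operations on the $\widetilde{U}$'s, the Čech complex of a finite definable cover of $U\subset X$ is literally identified with the Čech complex of the corresponding cover of $\widetilde{U}\subset\widetilde{X}$, giving $H^k(X_\mathrm{def},\underline{G}_{X_\mathrm{def}})\cong \check{H}^k\cong H^k(\widetilde{X},\underline{\widetilde{G}})$. To connect to the ordinary topological cohomology $H^k(X,\underline{G}_X)$, I would invoke that $X$ admits a finite definable good cover (by sets definably contractible, e.g. finite unions of ribbons handled as in Section~5, or by the triangulation theorem), on which ordinary and definable Čech cohomology agree term by term because the relevant intersections are definably connected; alternatively one cites the comparison already present in \cite{edmundo-jones-peatfield} or \cite{huber_period_iso_tame}.

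The main obstacle is the acyclicity input needed to pass from sheaf cohomology to Čech cohomology on the definable site: one must know that finite definable covers suffice and that suitably small (contractible) definable opens are acyclic for $\underline{G}$. This is where definable normality and the existence of finite definable good covers (via definable triangulation, Pawłucki \cite{pawlucki-triang}, or the cell-decomposition and ribbon arguments of Section~5) do the real work; the flasque-acyclicity Lemma~\ref{lem:def.flasque-acyclic} provides the abstract homological framework. Once finite good covers are available, the remaining steps are formal diagram chases with the Čech complexes and the morphisms of sites $i$ and $j$. I would organize the write-up as: (1) reduce to $\widetilde{X}$ via the equivalence of categories; (2) produce a finite definable good cover and check its intersections are definably contractible hence $\underline{G}$-acyclic on all three sites; (3) identify the three Čech complexes; (4) cite \cite{edmundo_woerheide_comparison} for the last isomorphism.
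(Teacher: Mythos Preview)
Your approach is sound in outline but takes a different route from the paper's proof. The paper does not go through \v{C}ech cohomology or good covers: instead it pushes forward the classical singular cochain resolution $0\to\underline{G}_X\to\mathfrak{S}^0_G\to\mathfrak{S}^1_G\to\cdots$ (with $\mathfrak{S}^k_G$ the sheafification of $U\mapsto S^k_G(U)$) along $i:X_\mathrm{top}\to X_\mathrm{def}$. The pushforward is still flasque on the definable site, hence acyclic by Lemma~\ref{lem:def.flasque-acyclic}, and it remains a resolution because by \cite{wilkie_open} every definable open is a finite union of open $C^0$-cells, which are contractible by Corollary~\ref{C 5.8}, so the singular cochain complex is locally exact. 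This yields $H^k(X_\mathrm{def},\underline{G}_{X_\mathrm{def}})\cong H^k(\Gamma(X,\mathfrak{S}^\bullet_G))\cong H^k_\mathrm{sing}(X,G)$ in one step. The remaining isomorphisms the paper treats exactly as you propose: $H^k_\mathrm{sing}(X,G)\cong H^k(X,\underline{G}_X)$ is declared standard, and the comparison with $H^k_\mathrm{sing,def}$ is cited from \cite{edmundo_woerheide_comparison} and \cite{huber_period_iso_tame}.

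Your \v{C}ech argument should also go through, but watch one subtle point: when you write ``definably contractible hence $\underline{G}$-acyclic on all three sites'', the implication on the \emph{definable} site (equivalently on $\widetilde{X}$) is not automatic---it is essentially the proposition itself restricted to contractible opens. You need an independent input here, for instance an induction on dimension via ribbons as in Section~5, or (most cleanly) the very flasque-resolution trick the paper uses, applied locally. Once that acyclicity is secured, Leray's theorem and the identification of the \v{C}ech complexes finish the job as you describe. The paper's route sidesteps this circularity because the only acyclicity it consumes is the vanishing of ordinary \emph{singular} cohomology on contractible opens, which is external and elementary.
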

\begin{proof} 
	The second isomorphism above is standard, while the comparison  $H^k_\mathrm{sing}(X,G) \cong H^k_\mathrm{sing,def}(X,G)$ follows for instance from \cite{edmundo_woerheide_comparison} or \cite{huber_period_iso_tame}. The existence of a natural comparison isomorphism $H^k(X_\mathrm{def},\underline{G}_{X_\mathrm{def}})\cong H^k_\mathrm{sing}(X,G)$ follows from \cite{edmundo-jones-peatfield} and \cite{edmundo_woerheide_comparison}. We also sketch a direct argument below.
    
    For an open subset $U \subset X$, and $k \geq 0$ let $S^k_G(U)$ denote the abelian group of $G$-valued functions on the set of continuous $q$-simplices in $U$, and let $\mathfrak{S}^k_G$ denote the sheafification of the flasque presheaf $U \mapsto S^k_G(U)$.
    By \cite[Prop. 5.27, \S 5.31]{warner-foundations} the complex  \[0\rightarrow \underline{G}_X \rightarrow \mathfrak{S}^0_G \rightarrow \mathfrak{S}^1_G \rightarrow \ldots\] is a \emph{flasque resolution} of the constant sheaf $\underline{G}_X$ on $X$. 
    
    
    
    The pushforward complex \[0 \rightarrow \underline{G}_{X_\mathrm{def}} \rightarrow i_\ast(\mathfrak{S}^0_G) \rightarrow i_\ast(\mathfrak{S}^1_G) \rightarrow \ldots\] is then a flasque (hence acyclic by \autoref{lem:def.flasque-acyclic}) resolution of $\underline{G}_{X_\mathrm{def}}$ on the definable site. Indeed, one may argue the exactness of the pushforward complex by hand, using \cite{wilkie_open} to note that every definable open $U \subset X$ is a finite union of open $C^0$-cells which are in turn contractible (see \autoref{C 5.8})  Hence, $H^k(X_\mathrm{def},\underline{G}_{X_\defin}) \cong H^k(\Gamma(X,\mathfrak{S}^\bullet_G)) \cong H^k_\mathrm{sing}(X,G).$
    
    
    This 
    completes the proof of the proposition.
\end{proof}

\subsection{Definable partitions of unity}


\begin{defn}
    Let $\mathcal{A}$ be a sheaf of rings on $X_\mathrm{def}.$ We say that \emph{$\mathcal{A}$ admits definable partitions of unity}, if given any open definable subset $U \subset X$ and any finite cover of $U$ by open definable subsets $U = \bigcup_{i=1}^r U_i$ one can find sections $f_i \in \mathcal{A}(U), 1\leq i \leq r$, such that
    \begin{enumerate}
        \item  $\sum_{i=1}^r f_i = 1$ and
        \item the support of each $f_i$ is contained in some closed \emph{definable} subset of $U$ contained in $U_i$.
    \end{enumerate}
\end{defn}

\begin{ex}[The sheaf $\mathcal{C}^q_{\mathrm{def}}$ of definable $C^q$-functions]
	For $0 \leq q \leq p$, and a definable open subset $U \subset X$, recall that $\mathcal{C}^q_{\mathrm{def}}(U)$ denotes the $\R$-algebra of definable $C^q$-maps $U \rightarrow \R.$ For an inclusion $U' \subset U$ of definable open subsets, and a section $f \in \Cdef^q(U)$, the restriction $f\vert_{U'} : U'\rightarrow \R$ is a section of $\Cdef^q(U').$ The restriction maps $\Cdef^q(U) \rightarrow \Cdef^q(U')$ make $\Cdef^q$ a sheaf of $\R$-algebras on the definable site $X_\mathrm{def}$ of $X.$  From \autoref{partition-of-unity}, we see that \emph{when $\mathcal{M}$ is polynomially bounded and admits $C^\infty$-cell decomposition and when $q < \infty$}, the sheaf of rings $\Cdef^q$ on $X_\mathrm{def}$ admits definable partitions of unity. In particular, this is the case for $\R_\an$.
\end{ex}

\begin{lemma}
    Let $\mathcal{A}$ be a sheaf of rings on $X_\mathrm{def}$ admitting definable partitions of unity. Let $\mathcal{F}$ be a sheaf of $\mathcal{A}$-modules on $X_\mathrm{def}.$ Then $\mathcal{F}$ is acyclic for the global sections functor, that is $H^k(X_\mathrm{def},\mathcal{F}) = 0$ for all $k \geq 1$.
\end{lemma}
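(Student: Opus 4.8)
The statement is the standard fact that sheaves of modules over a sheaf of rings admitting partitions of unity are acyclic, adapted to the definable site. The plan is to reduce to showing that such an $\mathcal{F}$ is \emph{soft} (or directly flasque-like) and then invoke \autoref{lem:def.flasque-acyclic}, or alternatively to run the classical dimension-shifting argument directly. I would proceed via dimension shifting: embed $\mathcal{F}$ into a flasque sheaf $\mathcal{I}$ on $X_\mathrm{def}$ (such embeddings exist since $Sh(X_\mathrm{def})$ has enough flasque objects, e.g. the Godement-type construction on the spectral space $\widetilde{X}$), let $\mathcal{G}$ be the cokernel, and show that the sequence
\[ 0 \to \mathcal{F}(U) \to \mathcal{I}(U) \to \mathcal{G}(U) \to 0 \]
is exact for every definable open $U \subset X$. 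Granting this, the long exact sequence gives $H^1(X_\mathrm{def},\mathcal{F}) = 0$ and $H^{k}(X_\mathrm{def},\mathcal{F}) \cong H^{k-1}(X_\mathrm{def},\mathcal{G})$; but $\mathcal{G}$ is again a sheaf of $\mathcal{A}$-modules (the cokernel of a map of $\mathcal{A}$-modules), so the same argument applies to it, and induction on $k$ finishes the proof.

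The heart of the matter, and the main obstacle, is the surjectivity of $\mathcal{I}(U) \to \mathcal{G}(U)$. Fix $\sigma \in \mathcal{G}(U)$. By definition of the cokernel sheaf, there is a finite admissible cover $U = \bigcup_{i=1}^r U_i$ by definable opens and sections $\tau_i \in \mathcal{I}(U_i)$ with $\tau_i \mapsto \sigma|_{U_i}$. Here one uses crucially that admissible covers on $X_\mathrm{def}$ are \emph{finite} — this is exactly what makes the partition-of-unity argument go through without any local-finiteness subtleties. Now apply the definable partition of unity hypothesis to $\mathcal{A}$ and the cover $\{U_i\}$ of $U$ to obtain $f_i \in \mathcal{A}(U)$ with $\sum f_i = 1$ and $\operatorname{supp}(f_i)$ contained in a closed definable subset of $U$ lying inside $U_i$. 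Since $\mathcal{I}$ is a sheaf of $\mathcal{A}$-modules (via the module structure of $\mathcal{F} \hookrightarrow \mathcal{I}$ — one should check at the outset that $\mathcal{I}$ and $\mathcal{G}$ can be taken to carry compatible $\mathcal{A}$-module structures, or simply observe that the partition-of-unity argument only needs $\mathcal{A}(U)$ to act on sections of $\mathcal{I}$ over $U$, which it does), the section $f_i \tau_i$ extends by zero to a global section of $\mathcal{I}$ over $U$: on $U_i$ it is $f_i\tau_i$, and on the definable open $U \setminus \operatorname{supp}(f_i)$ it is $0$, and these agree on the overlap, so they glue over the finite admissible cover $\{U_i,\ U\setminus \operatorname{supp}(f_i)\}$ of $U$. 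Setting $\tau := \sum_{i=1}^r f_i \tau_i \in \mathcal{I}(U)$, one computes that $\tau \mapsto \sum_i f_i \sigma|_{?} = \sigma$, the verification being local: on each $U_j$ all the relevant sections are defined and $\sum_i f_i \tau_i$ maps to $\sum_i f_i (\sigma|_{U_j}) = \sigma|_{U_j}$.

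One point that needs care: to even speak of $f_i\tau_i$ one must know $\mathcal{I}$ is a sheaf of $\mathcal{A}$-modules. The cleanest route is to choose the flasque embedding inside the category of $\mathcal{A}$-modules — for instance, the Godement sheaf $\prod_{\mathfrak{q}} (\widetilde{\mathcal{F}})_{\mathfrak{q}}$ of the extension $\widetilde{\mathcal{F}}$ to $\widetilde{X}$ is flasque and naturally a module over $\prod_{\mathfrak{q}} \widetilde{\mathcal{A}}_{\mathfrak{q}}$, hence over $\widetilde{\mathcal{A}}$, hence its restriction to $X_\mathrm{def}$ is an $\mathcal{A}$-module; the cokernel $\mathcal{G}$ is then automatically an $\mathcal{A}$-module, keeping the induction self-contained. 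With the surjectivity of sections in hand, the long exact cohomology sequence for $0 \to \mathcal{F} \to \mathcal{I} \to \mathcal{G} \to 0$ together with $H^{\geq 1}(X_\mathrm{def},\mathcal{I}) = 0$ (\autoref{lem:def.flasque-acyclic}) yields $H^1(X_\mathrm{def},\mathcal{F}) = 0$ for every $\mathcal{A}$-module $\mathcal{F}$, and then $H^{k}(X_\mathrm{def},\mathcal{F}) \cong H^{k-1}(X_\mathrm{def},\mathcal{G}) = 0$ for $k \geq 2$ by induction, completing the proof.
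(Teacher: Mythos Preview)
Your argument is correct and follows essentially the same approach as the paper: both work in the category of $\mathcal{A}$-modules on (the spectrum of) $X_\mathrm{def}$, embed into flasque/injective $\mathcal{A}$-modules, and use the definable partition of unity to glue local preimages $f_i\tau_i$ extended by zero over the finite cover $\{U_i, X\setminus\mathrm{supp}(f_i)\}$. The only cosmetic difference is packaging: the paper takes a full injective resolution of $\widetilde{\mathcal{F}}$ in $\widetilde{\mathcal{A}}$-modules and shows directly that every global cocycle is a coboundary, whereas you use a single short exact sequence and dimension-shift; the partition-of-unity step is identical in both.
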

\begin{proof} We adapt the proof in \cite[Prop. 4.36]{voisin-hodgeI}.
    We let $\widetilde{\mathcal{A}}$ (resp. $\widetilde{\mathcal{F}}$) denote the sheaf of rings (resp. sheaf of $\widetilde{\mathcal{A}}$-modules) induced by $\mathcal{A}$ (resp. $\mathcal{F}$) on the definable spectrum $\widetilde{X}.$ We pick an injective resolution $0 \rightarrow \widetilde{\mathcal{F}} \rightarrow \mathcal{I}^{(0)} \xrightarrow{d^{0}} \mathcal{I}^{(1)} \xrightarrow{d^{1}} \ldots $ in the category of sheaves of $\widetilde{\mathcal{A}}$-modules on $\widetilde{X}.$ We note that $\mathcal{I}^{(i)}$ are flasque \cite[\href{https://stacks.math.columbia.edu/tag/01EA}{Lemma 01EA}]{stacks-project} and hence acyclic for the global sections functor evaluated on $Sh(\widetilde{X})$. Therefore, for $k \geq 1$, 
    \[H^k(X_\mathrm{def},\mathcal{F}) \cong \frac{\ker(\Gamma(\widetilde{X},\mathcal{I}^{(k)})\xrightarrow{d^k}\Gamma(\widetilde{X},\mathcal{I}^{(k+1)}))}{d^{k-1}(\Gamma(\widetilde{X},\mathcal{I}^{(k-1)}))}.\] Given $\alpha \in \ker(d^{k})$, using the quasi-compactness of $\widetilde{X}$, we see that there is a \emph{finite} definable open cover $X = \bigcup_{i=1}^l U_i$, and sections $\beta_i \in \Gamma(\widetilde{U_i},\mathcal{I}^{(k-1)})$, such that $d^{k-1}(\beta_i) = \alpha\vert_{\widetilde{U_i}}$. Pick a definable partition of unity $\{f_i \in \Gamma(X,\mathcal{A}): 1\leq i \leq l\}$ subordinate to the cover $\{U_i: 1\leq i \leq l\}.$ 
    For each $i$, we set $V_i$ to be the complement in $X$ of the support of $f_i$, so that $X = U_i \cup V_i$ is a definable open cover of $X$, and $f_i \vert_{V_i} = 0.$
    Let $\gamma_i \in \Gamma(\widetilde{X},\mathcal{I}^{(k-1)})$ be the unique section such that $\gamma_i\vert_{\widetilde{U_i}} = f_i\cdot \beta_i$, and $\gamma_i\vert_{\widetilde{V_i}} = 0.$ Setting $\gamma := \sum_{i=1}^l \gamma_i \in \Gamma(\widetilde{X},\mathcal{I}^{(k-1)})$ one may verify that $d^{(k-1)}(\gamma) = \alpha.$
    This proves that for $k \geq 1$, $H^k(X_\mathrm{def},\mathcal{F}) = 0.$  
\end{proof}

\begin{rem}[The sheaves $\Eh^i_{\defin(q)}$ and $\Omega_{\defin(q)}^i$]
Suppose that $p\geq 1$, $0 \leq q \leq p-1$ and $i \geq 0$. For each open definable subset $U \subset X$, we have defined earlier the subspaces $\Omega^i_{\defin(q)}(U) \subset \Eh^i_{\defin(q)}(U)$ (see \autoref{def:Omega-def-con-forms} and \autoref{def:Omega-con-forms-q=0}). 
The association $U \mapsto \Eh^i_{\defin(q)}(U)$ (respectively $U \mapsto \Omega^i_{\defin(q)}(U)$) endowed with the usual restriction maps gives rise to a sheaf $\Eh^i_{\defin(q)}$ (respectively $\Omega^i_{\defin(q)}$)  of $\Cdef^{q}$-modules (respectively of $\Cdef^{q+1}$-modules, see Remark~\ref{R 2.5} (1)) on the definable site $X_\mathrm{def}.$
Furthermore, the exterior derivative operator (see \autoref{def:exterior-derivative-Omega-q=0} for the case $q=0$) defines a complex of sheaves of abelian groups on $X_\mathrm{def}$, \[ 0 \rightarrow \underline{\R}_{X_\mathrm{def}} \rightarrow \Cdef^{q+1} \xrightarrow{D} \Omega^1_{\defin(q)} \xrightarrow{D} \Omega^{2}_{\defin(q)} \xrightarrow{D} \ldots \xrightarrow{D} \Omega^n_{\defin(q)} \rightarrow 0. \]

\end{rem}

\begin{cor}\label{cor:acyclicity-of-Cqdef-modules}
      Let $\mathcal{M}$ be a polynomially bounded o-minimal structure admitting $C^\infty$-cell decomposition. Let $q < \infty$.
      Then for any sheaf $\mathcal{F}$ of $\Cdef^q$-modules and $k \geq 1$, $H^k(X_\mathrm{def},\mathcal{F}) = 0.$ In particular, for $i,k, p \geq 1$, and $q \leq p-1$ with $q < \infty$, we have $H^k(X_\mathrm{def},\Eh^{i}_{\defin(q)}) = 0,$ and $H^k(X_\mathrm{def},\Omega^i_{\defin(q)}) = 0.$ 
\end{cor}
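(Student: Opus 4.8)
The plan is to deduce the statement formally from the lemma on acyclicity of sheaves of modules over a sheaf of rings admitting definable partitions of unity, together with the example identifying $\Cdef^q$ as such a sheaf of rings.

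First I would record the general assertion. Since $\mathcal{M}$ is polynomially bounded and admits $C^\infty$-cell decomposition and $q<\infty$, the example following \autoref{partition-of-unity} shows that the sheaf of rings $\Cdef^q$ on $X_\mathrm{def}$ admits definable partitions of unity. Applying the acyclicity lemma with $\mathcal{A}=\Cdef^q$, every sheaf $\mathcal{F}$ of $\Cdef^q$-modules satisfies $H^k(X_\mathrm{def},\mathcal{F})=0$ for all $k\geq 1$. This is the first claim, and it immediately yields $H^k(X_\mathrm{def},\Eh^i_{\defin(q)})=0$, since $\Eh^i_{\defin(q)}$ is a sheaf of $\Cdef^q$-modules.

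Next I would treat $\Omega^i_{\defin(q)}$. By \autoref{R 2.5}(1) it is in general only a sheaf of modules over $\Cdef^{q+1}$, not over $\Cdef^q$. Because $q<\infty$ we have $q+1\in\IN$, in particular $q+1<\infty$, and $q\leq p-1$ gives $q+1\leq p$, so $X$ is also a definable $C^{q+1}$-manifold; hence the same example (i.e.\ \autoref{partition-of-unity} applied in regularity $q+1$) shows that $\Cdef^{q+1}$ admits definable partitions of unity. Applying the acyclicity lemma now with $\mathcal{A}=\Cdef^{q+1}$ and $\mathcal{F}=\Omega^i_{\defin(q)}$ gives $H^k(X_\mathrm{def},\Omega^i_{\defin(q)})=0$ for $k\geq 1$, completing the proof.

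There is no genuine obstacle here: the only point worth flagging is that the first assertion cannot be applied verbatim to $\Omega^i_{\defin(q)}$, since a $\Cdef^{q+1}$-module need not be a $\Cdef^q$-module (multiplication by a merely $C^q$ function would destroy the condition on the exterior derivative), so one must invoke the partition-of-unity statement one degree of regularity higher — which is harmless precisely because $q<\infty$ forces $q+1$ to be finite and $\leq p$.
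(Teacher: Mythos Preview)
Your argument is correct and matches the paper's intended reasoning: the corollary is stated without explicit proof, and is meant to follow immediately from the preceding lemma (acyclicity of modules over a sheaf of rings with definable partitions of unity) together with the example identifying $\Cdef^q$ as such a sheaf. Your careful handling of $\Omega^i_{\defin(q)}$ as a $\Cdef^{q+1}$-module rather than a $\Cdef^q$-module, and the observation that $q<\infty$ makes $q+1$ finite so that the partition-of-unity result still applies, is exactly the point the paper uses later (just before \autoref{thm:main-sheaf-version}) when it invokes Remark~\ref{R 2.5} to treat $\Omega^i_{\con(q)}$ as a $\Csub^{q+1}$-module.
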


\subsection{Comparing constructible de Rham with singular cohomology}

\emph{Henceforth, we shall specialize to the setting where the o-minimal structure $\mathcal{M}$ under consideration is $\R_\an$.}
In particular, henceforth $(X,[\mathcal{A}])$ shall denote a globally subanalytic $C^p$-manifold of dimension $n$ for some $p \in \IN \cup \{\omega\}.$ Note that in particular $p \geq 1$.

\begin{notation}
	In the case that $\mathcal{M} = \R_\an$, we denote by $X_\mathrm{sub}$ the definable site $X_\defin$, by
	$\Csub^q$ the sheaf $\Cdef^q$, by $\Eh^{i}_{\sub(q)}$ the sheaf $\Eh^{i}_{\defin(q)}$, and by $\Omega^i_{\sub(q)}$ the sheaf $\Omega^i_{\defin(q)}$ defined above. 
\end{notation}



\begin{rem}[The sheaves $\Ccon^q, \Eh^i_{\con(q)}$ and $\Omega^i_{\con(q)}$]
	\begin{itemize}
		\item Let $0 \leq q \leq p$. The association $U \mapsto \Ccon^q(U)$, the $\R$-algebra of constructible real valued $C^q$-functions on $U$,  along with usual restriction maps makes $\Ccon^q$ a sheaf of $\Csub^q$-algebras on $X_\mathrm{sub}$.
		\item Let $0 \leq q \leq (p-1)$. We have defined in \autoref{def:const-Cq-forms}, \autoref{def:Omega-def-con-forms} and \autoref{def:Omega-con-forms-q=0}, the subspaces $\Omega^i_{\con(q)}(U) \subset \Eh^i_{\con(q)}(U).$ The association $U \mapsto \Eh^i_{\con(q)}(U)$ (respectively $U \mapsto \Omega^i_{\con(q)}(U)$) along with the usual restriction maps defines a sheaf $\Eh^i_{\con(q)}$ (respectively $\Omega^i_{\con(q)}$) of $\Ccon^q$-modules (respectively of $\Ccon^{q+1}$-modules) on the subanalytic site $X_\sub$. 
		\item The exterior derivative defined in \autoref{def:exterior-derivative-Omega-q=0} gives rise to a complex $\Omega^\bullet_{X,\con(q)}$ of sheaves of abelian groups on $X_\mathrm{sub}$: 
		\[ 0 \rightarrow \underline{\R}_{X_\mathrm{sub}} \rightarrow \Oconext{0}{q} \xrightarrow{D} \Oconext{1}{q} \xrightarrow{D} \ldots \xrightarrow{D} \Oconext{n}{q} \rightarrow 0. \] 
	\end{itemize}

\end{rem} 


By \autoref{functoriality_easy} and \autoref{thm:functorial}, we see that given a globally subanalytic $C^p$-map $f: X \rightarrow Y$ between globally subanalytic $C^p$-manifolds $X$ and $Y$, the pullback of differential forms induces for each $0 \leq q \leq (p-1)$, a morphism of complexes of sheaves of abelian groups on $Y_\mathrm{sub}$: \[\Omega^\bullet_{Y,\con(q)} \rightarrow f_\ast(\Omega^\bullet_{X,\con(q)}).\] 

The $i^\mathrm{th}$-cohomology group of the complex $\Gamma(X_\mathrm{sub},\Oconext{\bullet}{q})$ of global sections is the $i^\mathrm{th}$ $C^q$-constructible de Rham cohomology group of $X$, denoted by $H^i_{\mathrm{dR,con}(q)}(X)$ earlier in \autoref{def:de-rham-complex-definable-const}.

It follows from the Poincar\'e lemma for open $C^0$-cells, that is \autoref{Poincare}, along with Wilkie's  observation \cite{wilkie_open} that every globally subanalytic open is a finite union of open $C^0$-cells,
that for every $0 \leq q \leq p-1$ the above complex $\Omega^\bullet_{X,\con(q)}$ is a \emph{resolution} of the constant sheaf $\underline{\R}_{X_\mathrm{sub}}.$
Furthermore, by Remark~\ref{R 2.5}  each $\Oconext{i}{q}$  being a sheaf of $\Ccon^{q+1}$-modules on $X_\mathrm{sub}$, is in particular a sheaf of $\Csub^{q+1}$-modules. 
Therefore, whenever $q < \infty$, by \autoref{cor:acyclicity-of-Cqdef-modules} the above complex is an \emph{acyclic resolution} of the constant sheaf $\underline{\R}_{X_\mathrm{sub}}.$  Thus we have proved the following:

\begin{thm}\label{thm:main-sheaf-version}
    Let $X$ be a globally subanalytic $C^p$-manifold (for some $p \in \IN\cup\{\omega\}$). Then for all $0 \leq q \leq p-1$, we have functorial isomorphisms:
    \[ \mathbb{H}^i(X_\sub,\Omega^\bullet_{X,\con(q)}) \cong H^i(X_\mathrm{sub},\underline{\R}_{X_\mathrm{sub}}) \cong H^i_\mathrm{sing}(X,\R), \] wherein $\mathbb{H}^i(X_\sub,\Omega^\bullet_{X,\con(q)})$ denotes the $i^\mathrm{th}$ hypercohomology group of the complex $\Omega^\bullet_{X,\con(q)}$ on $X_\mathrm{sub}$. Furthermore, if $q < \infty$, we have functorial isomorphisms:
    \[ H^i_{\mathrm{dR,con}(q)}(X) \cong H^i(X_\mathrm{sub},\underline{\R}_{X_\mathrm{sub}}) \cong H^i_\mathrm{sing}(X,\R). \] 
\end{thm}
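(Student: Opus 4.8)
The plan is to deduce \autoref{thm:main-sheaf-version} from three ingredients already established in the excerpt: (i) the Poincar\'e lemma for open $C^0$-cells (\autoref{Poincare}); (ii) Wilkie's decomposition of any globally subanalytic open set into finitely many open $C^0$-cells \cite{wilkie_open}; and (iii) the acyclicity of sheaves of $\Csub^{q+1}$-modules when $q<\infty$ (\autoref{cor:acyclicity-of-Cqdef-modules}), together with the identification of $H^i(X_\sub,\underline{\R}_{X_\sub})$ with $H^i_\sing(X,\R)$ proved in the previous subsection.

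First I would argue that the complex of sheaves
\[ 0 \rightarrow \underline{\R}_{X_\mathrm{sub}} \rightarrow \Oconext{0}{q} \xrightarrow{D} \Oconext{1}{q} \xrightarrow{D} \ldots \xrightarrow{D} \Oconext{n}{q} \rightarrow 0 \]
on $X_\sub$ is exact, i.e.\ a resolution of the constant sheaf. Exactness is checked on stalks of the associated sheaves on the definable spectrum $\widetilde X$, equivalently it suffices to check that for each definable open $U\subset X$ and each closed form over $U$, there is a finite definable open cover of $U$ on which the form becomes exact. By Wilkie's result $U$ is a finite union of open globally subanalytic $C^0$-cells $C_1,\dots,C_s$; refining, one reduces to showing that a $D$-closed section of $\Oconext{i}{q}$ over an open $C^0$-cell $C$ is $D$-exact in positive degree (and locally constant in degree $0$), which is exactly the content of \autoref{Poincare}. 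At degree $0$, the kernel of $D\colon \Ccon^{q+1}\to \Oconext{1}{q}$ consists of the locally constant $\R$-valued functions, giving the augmentation $\underline{\R}_{X_\sub}\hookrightarrow \Oconext{0}{q}$; here one uses that a definable open set is connected iff it is definably connected.

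Next, by \autoref{R 2.5} each $\Oconext{i}{q}$ is a sheaf of $\Ccon^{q+1}$-modules, hence in particular of $\Csub^{q+1}$-modules, so when $q<\infty$ (so that $q+1\le p\le\infty$ but more precisely $q+1<\infty$ and $\Mh=\R_\an$ is polynomially bounded with $C^\infty$-cell decomposition) \autoref{cor:acyclicity-of-Cqdef-modules} applies and gives $H^k(X_\sub,\Oconext{i}{q})=0$ for all $k\ge 1$. Thus $\Omega^\bullet_{X,\con(q)}$ is an acyclic resolution of $\underline{\R}_{X_\sub}$, and computing sheaf cohomology via this resolution yields
\[ H^i(X_\sub,\underline{\R}_{X_\sub}) \cong H^i\big(\Gamma(X_\sub,\Oconext{\bullet}{q})\big) = H^i_{\mathrm{dR,con}(q)}(X). \]
For general $q$ (including $q=\omega$), one drops acyclicity but still has a resolution, so the hypercohomology $\mathbb{H}^i(X_\sub,\Omega^\bullet_{X,\con(q)})$ agrees with $H^i(X_\sub,\underline{\R}_{X_\sub})$ by the standard spectral-sequence comparison for a complex quasi-isomorphic to $\underline{\R}_{X_\sub}[0]$. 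Finally $H^i(X_\sub,\underline{\R}_{X_\sub})\cong H^i_\sing(X,\R)$ is the comparison isomorphism from the previous subsection, and functoriality in $X$ follows from the functoriality of the morphism of complexes $\Omega^\bullet_{Y,\con(q)}\to f_*\Omega^\bullet_{X,\con(q)}$ noted above together with functoriality of the comparison isomorphism.

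The main obstacle is the exactness of the de Rham complex of sheaves, i.e.\ reducing a local statement on arbitrary definable opens to the Poincar\'e lemma on open $C^0$-cells: one must be careful that the admissible covers in the definable site are \emph{finite}, so the Wilkie decomposition into finitely many cells is exactly what makes the argument go through, and that the stalk computation on $\widetilde X$ is legitimate — this is why the excerpt insists on passing through the definable spectrum where sheaf theory behaves classically. The acyclicity input is comparatively formal once \autoref{cor:acyclicity-of-Cqdef-modules} is in hand, and the passage from the $q<\infty$ de Rham statement to the hypercohomology statement for $q=\omega$ is routine homological algebra.
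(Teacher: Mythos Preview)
Your proposal is correct and follows essentially the same approach as the paper: exactness of $\Omega^\bullet_{X,\con(q)}$ via the Poincar\'e lemma on open $C^0$-cells together with Wilkie's finite decomposition, acyclicity for $q<\infty$ via \autoref{R 2.5} and \autoref{cor:acyclicity-of-Cqdef-modules}, and the comparison $H^i(X_\sub,\underline{\R}_{X_\sub})\cong H^i_\sing(X,\R)$ from the preceding subsection. Your account is in fact somewhat more explicit than the paper's (e.g.\ spelling out the stalk/spectrum argument and the hypercohomology step for $q=\omega$), but the ingredients and logical structure coincide.
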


\begin{finrem}
	\label{FR 6.13}
	\begin{itemize}
		\item[(1)]
	One could generalize the constructible de Rham theorem to the bigger o-minimal structure $\IR_\an^K$ where $K$ denotes the field of real algebraic numbers (see \cite{kaiser_measures}).
	\item[(2)] 
	In the semialgebraic case over $\IR$ or over $\IQ$ one could obtain the constructible de Rham theorem by introducing some proper subclasses of the spaces of constructible differential forms (see \cite{kaiser_integrated_nash,kaiser_integrated_algebraic}).
	\item[(3)] 
	One could even approach the non-archimedean case in the globally subanalytic setting by the results of \cite{kaiser_integration_non-arch,kaiser_fundamenta}. 
\end{itemize}
 But details have to be checked.
	\end{finrem}

\bibliographystyle{alpha}
\bibliography{deRham}
\end{document}